\newcommand\nocell[1]{\multicolumn{#1}{c|}{}}
\renewcommand*\env@matrix[1][\arraystretch]{%
  \edef\arraystretch{#1}%
  \hskip -\arraycolsep
  \let\@ifnextchar\new@ifnextchar
  \array{*\c@MaxMatrixCols c}}
\numberwithin{equation}{subsection}
\newcommand\Tstrut{\rule{0pt}{3.5ex}}       
\newcommand\Bstrut{\rule[-1.4ex]{0pt}{0pt}} 
\newcommand\TBstrut{\Tstrut\Bstrut}         
\newcommand\TTstrut{\rule{0pt}{6ex}}       
\newtheorem{theorem}{Theorem}[subsection]
\theoremstyle{remark}
\newtheorem{definition}[theorem]{Definition}
\theoremstyle{theorem}
\newtheorem{proposition}[theorem]{Proposition}
\newtheorem{lemma}[theorem]{Lemma}
\newtheorem{corollary}[theorem]{Corollary}
\theoremstyle{remark}
\newtheorem{remark}[theorem]{Remark}
\theoremstyle{remark}
\title{K3 surfaces with a symplectic action of $(\mathbb Z/2\mathbb Z)^2$}
\author{Benedetta Piroddi}
\date{} 
\begin{document}
\maketitle
\begin{abstract}
We study the symplectic action of the group $(\mathbb Z/2\mathbb Z)^2$ on a K3 surface $X$: we describe its action on $H^2(X,\mathbb Z)$ and the maps induced in cohomology by the rational quotient maps; we give a lattice-theoretic characterization of $\tilde Z$, the resolution of singularities of the quotient $X/\iota$, where $\iota$ is any of the involutions in $(\mathbb Z/2\mathbb Z)^2$. Assuming $X$ is projective, we describe the correspondence between irreducible components of its moduli space, and those of $\tilde Z$ and $\tilde Y$ (the resolution of singularities of $X/(\mathbb Z/2\mathbb Z)^2$): this being the first description of this correspondence for a non-cyclic action, we see new phenomena, of which we provide explicit examples assuming $X$ has a polarization of degree 4.
\end{abstract}

\section*{Introduction}
If $X$ is a K3 surface, and $G$ is a finite group acting symplectically on $X$, the quotient $Y=X/G$ has only $ADE$ singularities, and therefore admits another K3 surface $\tilde Y$ as resolution. As it was already established by Nikulin in \cite{Nikulin2}, the moduli spaces of $X$ and $\tilde Y$ are in bijection. If $G$ is not simple, one can consider also the intermediate quotient surfaces $\tilde Z_H$, obtained as resolution of quotients of the form $X/H$, where $H$ is a normal subgroup of $G$: the moduli space of $\tilde Z_H$ is again in bijection with that of $X$ and $\tilde Y$.\\
In the projective case, the moduli space of $X$ splits in countable irreducible components -- here called \emph{projective families} -- at least one for every choice of the degree $2d=L^2$ of the polarization $L$ of $X$; if there is more than one family, the corresponding quotient surfaces may be naturally polarized with different degrees. A complete description of the correspondence between projective families of $X$ and its quotients is only known for $G=\mathbb Z/n\mathbb Z$, $n=2,3,4$; symplectic involutions are studied in \cite{VGS} and \cite{GS}, automorphisms of order 3 and 4 in \cite{GP} and \cite{P} respectively.

In this paper, we study the symplectic action of $G=(\mathbb Z/2\mathbb Z)^2$ on a K3 surface. Following the same outline as \cite{GP}, \cite{P}, we start by selecting a K3 surface $X$ with high Picard rank and a Jacobian fibration $\pi: X\rightarrow\mathbb P^1$ such that $MW(\pi)\simeq G$: the resulting description of the action of $G$ on $H^2(X,\mathbb Z)$ holds for any K3 surface thanks to \cite[Thm. 4.7]{Nikulin2}. We then study the maps induced in cohomology by the rational quotient maps, and we give a lattice-theoretic characterization of the intermediate quotien surface $\tilde Z$ (see Thm. \ref{Ztaunonproj}).  We follow the same approach as \cite{P} to describe the correspondence between projective families of $X,\tilde Z,\tilde Y$: we fix an embedding $\Omega_{2,2}\hookrightarrow\Lambda_{\mathrm{K3}}$ of the co-invariant lattice for the action of $G$, and choose for each projective family of $X$ a representative class $L\in\Omega_{2,2}^{\perp_{\Lambda_{\mathrm{K3}}}}$ that takes the role of $\Omega_{2,2}^{\perp_{NS(X)}}$; the correspondence is given by taking the image of $L$ through the quotient maps.\\
We remark that the image of the map $\pi_{2,2*}$, induced in cohomology by the rational quotient map $\pi_{2,2}: X\dashrightarrow \tilde Y$, is not primitive in $H^2(\tilde Y,\mathbb Z)$, being instead a sublattice of index 2 (see Remark \ref{gammamezzi}). A similar result holds for its dual $\pi_{2,2}^*$, its image being a sublattice of index $2^3$ in $H^2(X,\mathbb Z)$  (see Corollary \ref{pi22notprimitive}).

The correspondence between projective families of $X$ and its quotients is quite intricate: as already anticipated by Nikulin \cite{Nikulin2}, projective families of $X$ and $\tilde Y$ are in bijection, but their relation to projective families of $\tilde Z$ presents a wide range of phenomena (see Theorem \ref{correspondence:GammaKlein}).
As for $\mathbb Z/4\mathbb Z$ we see that two family of $X$ can collide on the same family of $\tilde Z$: this happens for two families in every degree $2(8h+4)$. 
If $NS(X)$ is an overlattice of index 2 of $\Omega_{2,2}\oplus\langle 2(4h+2)\rangle$, then $X$ admits two different actions of $(\mathbb Z/2\mathbb Z)^2$; since they do not generally preserve the same ample class on $X$, however, there is no projective model that displays both as actions induced by the ambient space. For one of these actions, the projective family the intermediate quotient surface $\tilde Z$ belongs to is determined; for the other action, the three involutions of $(\mathbb Z/2\mathbb Z)^2$ act differently on the polarization: more precisely, there are always two (call them $\tau, \varphi$) such that the intermediate quotient surfaces $\tilde Z_\tau, \tilde Z_\varphi$ belong to different projective families. This latter asymmetry appears also for a projective family in degree $4h$.

Lastly, if $L^2=4h$, and $NS(X)=\Omega_{2,2}\oplus\mathbb ZL$, the action of $(\mathbb Z/2\mathbb Z)^2$ on $\mathbb P(H^0(X,L)^*)$ is induced by an action of the dihedral group of order 8 on $H^0(X,L)$ (see Thm. \ref{prop:autospaziKlein}). This implies that, while one of the generators of $(\mathbb Z/2\mathbb Z)^2$ splits $\mathbb P(H^0(X,L)^*)$ in two eigenspaces, so that we can get a model of the intermediate quotient surface by projection on one of them, the other generator swaps the eigenspaces of the first, so we cannot do the same for the total quotient.

\section*{Aknowledgements}
This paper presents part of the results of the author's PhD thesis. She thanks the PhD program of Università degli Studi di Milano for the support, and is especially grateful to her tutor Alice Garbagnati. The author is currently supported by the Prin project 2022 \emph{Moduli spaces and special varieties}, and she's a member of INdAM GNSAGA.

\section{The symplectic action of  $(\mathbb Z/2\mathbb Z)^2$ on the K3 lattice}

As for any K3 surface $X$ the symplectic action of $(\mathbb Z/2\mathbb Z)^2$ on $X$ induces always the same action as isometry group on the \emph{K3 lattice} $\Lambda_{\mathrm{K3}}\simeq H^2(X,\mathbb Z)$ \cite[Thm. 4.7]{Nikulin2}, we can describe this action starting from a K3 surface of our choice. We are going to use a K3 surface with a Jacobian fibration $X\xrightarrow{p}\mathbb P^1$ such that $MW(p)=(\mathbb Z/2\mathbb Z)^2$: indeed the Mordell-Weil group $MW(p)$ acts on $X$ by translation on each fiber of $p$, therefore providing a symplectic action on $X$. Moreover, from the relation \cite[Thm. 6.3]{SchuttShioda}
\begin{equation}\label{eqn:MW}
MW(p)\simeq NS(X)/{\mathcal{T}(p)}
\end{equation}
we deduce that, if $MW(p)$ is finite, $NS(X)$ is an overlattice of finite index of the \emph{trivial lattice} ${\mathcal{T}(p)}$: this is the lattice generated by the generic fiber $F$ of $p$, the image of the zero section $s=s(\mathbb P^1)$ and the irreducible components of the reducible fibers which do not intersect the curve $s$. We compute the action of $MW(p)$ on $NS(X)$ using the description of the latter as overlattice of ${\mathcal{T}(p)}$; since this action is symplectic, and therefore trivial on the transcendental lattice $T_X$, it extends uniquely to $H^2(X,\mathbb Z)$.

\subsection{The surface $X_{\omega}$}\label{sec:surfaceKlein}

Let $\omega=e^{i\pi/3}$, consider the elliptic curve $E_\omega=\mathbb C/(\mathbb Z\oplus\mathbb Z\omega)$ and define the K3 surface $X_{\omega}=Kum(E_{\omega}\times E_{\omega})$: it is the unique K3 surface whose transcendental lattice is \[T(X_{\omega})=\begin{bmatrix}4 & 2 \\ 2 & 4 \end{bmatrix}.\]
A description of all the possible Jacobian fibrations on $X_{\omega}$ is provided by Nishiyama \cite[Table 1.3]{Nishiyama}: in particular, there is a fibration 
$\pi: X_{\omega}\rightarrow\mathbb{P}^1$ such that $MW(\pi)\simeq(\mathbb{Z}/{2\mathbb{Z}})^2$. The reducible fibers of $\pi$ are one of type $I^*_{6}$, one of type $I_{6}$ and three of type $I_{2}$ that we'll denote $I^j_{2},\ j=1,2,3$. Call $C_0$ (respectively $D_0,\ E^j_0$) the component of $I^*_{6}$ (resp. $I_{6},\ I^j_2$) intersected by the curve $s$, and number the other components so that, for every $k\in\mathbb{Z}/6\mathbb{Z}$, $D_i$ intersects only $D_{(k+1)}$ and $D_{(k-1)}$; $C_0,C_1$ intersect only $C_2$; $C_9,C_{10}$ intersect only $C_8$ and, for $i=2,\dots,8$, $C_i$ intersects both $C_{(i+1)}$ and $C_{(i-1)}$; moreover, it holds $E^j_0E^j_1=2,\ j=1,2,3$.\\
Using the height pairing formula \cite[\S 11.8]{SchuttShioda} we can determine the components of the reducible fibers $C_i, D_k, E^j_m$ that have non-trivial intersection with the elements of $MW(\pi)$: the zero section $s$ intersects the components $C_0, D_0$ and $E^j_0$; we call $t$ the section that intersects the components $C_1, D_3$ and $E^j_1$, $r$ the section that intersects the components $C_{10},D_0$ and $E^j_1$, and $q=t+r$ (where + is the sum in $MW(\pi)$): therefore $q$ intersects the components $C_9,D_3$ and $E^j_0$. Since $\mathcal T(\pi)$ generates $NS(X_\omega)$ over $\mathbb Q$ by \eqref{eqn:MW} ($MW(\pi)$ is finite), we can write $t,r,q$ uniquely as
\begin{align*}
t&=2F+s-(\sum_{i=1}^8 C_i+D_2+D_3+D_4)-(C_9+C_{10}+D_1+D_3+D_5+\sum_{j=1}^3 E^j_1)/2;\\
r&=2F+s-(\sum_{i=1}^8 iC_i+4C_9+5C_{10}+\sum_{j=1}^3 E^j_1)/2;\\
q&=2F+s-(\sum_{i=1}^8 iC_i+5C_9+4C_{10}+D_1+2D_2+3D_3+2D_4+D_5)/2;
\end{align*}
a $\mathbb{Z}$-basis of $NS(X_{\omega})$ is $\mathcal{B}=\{F, s, t, r, C_2,\dots , C_{10}, D_1,\dots, D_5,E^1_1,E^2_1\}$.

\subsection{The action of $(\mathbb Z/2\mathbb Z)^2$ on the K3 lattice}\label{sec:azioni}

In the previous section we have described $NS(X_\omega)$ as overlattice of finite index of $\mathcal T(\pi)$; however, to compute in Section \ref{sec:quotients_Klein} the maps induced in cohomology by the quotient maps, we will use another description.  We find a sublattice $W$ of finite index of $H^2(X_\omega,\mathbb Z)$ such that the symplectic involutions $\tau,\rho$ and $\varphi=\rho\circ\tau$ (corresponding respectively to the translation by the sections $t,r,q\in MW(\pi)$) act as permutation of orthogonal components of $W$.

There are 8 orthogonal copies of $A_2$ in $NS(X_{\omega})$ that are either fixed or exchanged in pairs by $\tau^*,\rho^*$: $(s,C_0), (t,C_1), (r,C_{10}),(q,C_9),(C_3,C_4),(C_7,C_6),(D_1,D_2),(D_4,D_5)$. Now define
\begin{align*}
S_1&=C_3+2C_4+3C_5+2C_6+C_7,\\
S_2&=4F+2t+2s-(\sum_{i=2}^8iC_i+4C_9+4C_{10}),\\
S_3&=E^1_1-E^2_1,\\
S_4&=-3(E^1_1+E^2_1)-4(2r-2F-t-s+\sum_{i=2}^8(i-1)C_i)+\\
&\ +2(-7C_9-9C_{10}+D_1+2D_2+3D_3+2D_4+D_5):
\end{align*}
then the orthogonal complement in $NS(X_{\omega})$ of the direct sum $A_2^{\oplus 8}$ is isomorphic over $\mathbb Q$ to the lattice $U(3)\oplus A_2(2)=\langle (S_1+S_2)/2,(S_2-S_1)/2\rangle\oplus\langle (S_3+S_4)/2,(S_3-S_4)/2\rangle$; $\tau^*, \rho^*$ act as $id$ on $U(3)$, as $-id$ on $A_2(2)$. We remark that the generators of $U(3)\oplus A_2(2)$ are integral elements in $NS(X_{\omega})$, as it can be seen writing them in the basis $\mathcal B$.\\
We can then write $NS(X_\omega)$ as overlattice of finite index of $A_2^{\oplus 8}\oplus U(3)\oplus A_2(2)$, adding to the list of generators the following (integral) elements, on which we can compute by linearity the action of $\tau^*,\rho^*, \varphi^*$:
\begin{align}\label{generatoriNS}
u_1&=(q-s+C_0-C_3+C_4-C_6+C_7-C_9-D_1+D_2+D_4-D_5)/3,\nonumber\\
u_2&=(r-t+C_1-C_3+C_4-C_6+C_7-C_{10}+D_1-D_2-D_4+D_5)/3,\\
u_3&=(S_1-C_3+C_4+C_6-C_7)/3,\nonumber\\
u_4&=((S_1+S_2)/2-q-r-C_3+C_4+C_9+C_{10})/3,\nonumber\\
u_5&=((S_1+S_2+S_3+S_4)/2+r-C_3+C_4-C_{10}-D_1+D_2+D_4-D_5+S_3)/3.\nonumber
\end{align}

Thanks to \cite[Thm. 4.7]{Nikulin2}, we can now generalize our description of the action of $(\mathbb Z/2\mathbb Z)^2$ on $X_\omega$ to any K3 surface. We will use $\tau$ and $\varphi$ as generators of $(\mathbb Z/2\mathbb Z)^2$: the motivation for this choice is explained in Remark \ref{rem:different_action}.

\begin{proposition}\label{sublattice}
\begin{enumerate}
\item
The isometries $\tau^*$ and $\varphi^*$ act on the sublattice of $\Lambda_{\mathrm{K3}}$ of finite index $W\coloneqq A_2^{\oplus 8}\oplus A_2(2)\oplus{\begin{bmatrix}0 & 6 \\6 & 0\end{bmatrix}}\oplus{\begin{bmatrix}4 & 2 \\2 & 4\end{bmatrix}}$ as follows:
\[
\begin{tikzcd}[cramped, sep=small] 
A_2 \arrow[r,phantom, "\oplus"] \arrow[r, bend left=50, above, "\tau^*", <->] \arrow[rrr, bend right=30, "\varphi^*", swap, <->] &A_2 \arrow[r,phantom, "\oplus"] \arrow[r, bend right=50, swap, <->]&A_2 \arrow[r,phantom, "\oplus"] \arrow[r, bend left=50, above, "\tau^*", <->] &A_2 \arrow[r,phantom, "\oplus"]  
&A_2 \arrow[r,phantom, "\oplus"] \arrow[r, bend right=50, swap, "\varphi^*", <->] 
&A_2 \arrow[r,phantom, "\oplus"]  
\arrow[l, start anchor=east, end anchor=west, above, no head, decorate, decoration={brace,mirror,amplitude=5pt}, "\tau^*=id" inner sep=5pt, yshift=0.8em, swap]
&A_2 \arrow[r,phantom, "\oplus"] \arrow[r, bend left=50, above, "\tau^*", <->] 
\arrow[r, bend right=50, swap, "\varphi^*", <->] 
&A_2 \arrow[r,phantom, "\oplus"]  
&A_2(2)\arrow[start anchor=east, end anchor=west, no head, decorate, decoration={brace,mirror,amplitude=5pt}, "\tau^*=-id" inner sep=5pt, yshift=0.8em, swap] \arrow[start anchor=west, end anchor=east, no head, decorate, decoration={brace,mirror,amplitude=5pt}, "\varphi^*=id" inner sep=5pt, yshift=-1em, swap]
\arrow[r,phantom, "\oplus"] &U(3) \arrow[r,phantom, "\oplus"] \arrow[r, start anchor=west, end anchor=east, no head, decorate, decoration={brace,mirror,amplitude=5pt}, "\tau^*=\varphi^*=id" inner sep=5pt, yshift=-1.5em, swap]&\begin{bmatrix}4 &2\\2 &4\end{bmatrix}
\end{tikzcd}.\]
\item For $i=1,2$, denote ${a_i,\ b_i,\ c_i,\ d_i,\ e_i,\ f_i,\ g_i,\ h_i}$ the generators of the eight copies of $A_2$, such that
\begin{align*}
\tau^*&:a_i \leftrightarrow b_i,\quad c_i \leftrightarrow d_i,\quad g_i \leftrightarrow h_i;\\
\varphi^*&:a_i \leftrightarrow d_i,\quad b_i \leftrightarrow c_i,\quad e_i \leftrightarrow f_i; \quad g_i \leftrightarrow h_i;
\end{align*}
denote $w,z$ the generators of $A_2(2)$ (on which $\tau^*$ acts as $-id$ and $\varphi^*$ as $id$), $x,y$ the generators of $U(3)$, and $v_1,v_2$ the generators of $\begin{bmatrix}4 &2\\2 &4\end{bmatrix}$. Then the lattice $\Lambda_{\mathrm{K3}}$ is isomorphic to the overlattice $H^2(X,\mathbb Z)$ of $W$ obtained by adding the following elements to the list of generators (and the action of $\tau^*$ and $\varphi^*$ on them is deduced by $\mathbb Q$-linear extension):
\begin{align}\label{generatori}
\alpha&=(-a_1+a_2+d_1-d_2-e_1+e_2+f_1-f_2-g_1+g_2+h_1-h_2)/3, 
\nonumber\\
\beta&=(-b_1+b_2+c_1-c_2-e_1+e_2+f_1-f_2+g_1-g_2-h_1+h_2)/3, 
\nonumber\\
\gamma&=(x-y-e_1+e_2-f_1+f_2)/3, 
\nonumber\\
\delta&=(x-c_1+c_2-d_1+d_2-e_1+e_2)/3, 
\\
\varepsilon&=(x-z+w+c_1-c_2-e_1+e_2-g_1+g_2+h_1-h_2)/3, 
\nonumber\\
\zeta&=(x+z+c_1+c_2+e_1+e_2+g_1+g_2+h_1+h_2+\varepsilon)/2+v_2/2, 
\nonumber\\
\eta&=(x+c_1+c_2+e_1+e_2+\varepsilon)/2+(g_1-g_2+h_1-h_2)/6+v_1/6-v_2/3.
\nonumber
\end{align}
\end{enumerate}
\end{proposition}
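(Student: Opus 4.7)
The plan is to prove the statement by explicit calculation on the concrete surface $X_\omega$ of Section \ref{sec:surfaceKlein}, and then invoke \cite[Thm.~4.7]{Nikulin2} to transport the result to any K3 with a symplectic action of $(\mathbb Z/2\mathbb Z)^2$. Since that theorem asserts any two such actions induce conjugate actions on $\Lambda_{\mathrm{K3}}$, an abstract isometry-class description of a finite-index sublattice $W\subset H^2(X_\omega,\mathbb Z)$ and of the glue vectors used to complete it to $\Lambda_{\mathrm{K3}}$ determines the $\tau^*,\varphi^*$-action on $\Lambda_{\mathrm{K3}}$ independently of the model.

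For part (1), I would first check that the eight pairs $(s,C_0)$, $(t,C_1)$, $(r,C_{10})$, $(q,C_9)$, $(C_3,C_4)$, $(C_7,C_6)$, $(D_1,D_2)$, $(D_4,D_5)$ listed in Section \ref{sec:azioni} span mutually orthogonal copies of $A_2$: this is a direct intersection computation from the fibre data in Section \ref{sec:surfaceKlein}. The permutation of these eight copies by $\tau^*$ and $\varphi^*$ then follows from the fact that $\tau$ and $\varphi$ act on $X_\omega$ by translation by the sections $t$ and $q=t+r$ on the smooth fibres of $\pi$, extended to reducible fibres via the component each section intersects. Next I would compute the orthogonal complement of $A_2^{\oplus 8}$ inside $NS(X_\omega)$: the elements $S_1,S_2,S_3,S_4$ of Section \ref{sec:azioni} are seen to be orthogonal to all eight $A_2$ summands, and computing their Gram matrix identifies the rational span of the first two with a hyperbolic plane of scale $3$ and that of the last two with $A_2(2)$; the actions of $\tau^*,\varphi^*$ on them follow by expressing each $S_i$ in the basis $\mathcal B$ and applying the permutations already obtained. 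Adjoining $T(X_\omega)\simeq\begin{bmatrix}4&2\\2&4\end{bmatrix}$, on which any symplectic action is trivial by \cite[Thm.~4.7]{Nikulin2}, one obtains the sublattice $W$ and the action displayed in the diagram.

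For part (2), the task is to realize $H^2(X_\omega,\mathbb Z)$ as the overlattice of $W$ generated by the seven vectors $\alpha,\beta,\gamma,\delta,\varepsilon,\zeta,\eta$. Under the abstract identification of the $A_2$-summands, $A_2(2)$ and $\begin{bmatrix}0 & 6 \\ 6 & 0\end{bmatrix}$ with the concrete $NS$-components above, the first five vectors correspond to the $u_1,\ldots,u_5$ of \eqref{generatoriNS}, whose integrality in $NS(X_\omega)$ is explicit; $\zeta,\eta$ further involve the transcendental generators $v_1,v_2$, and one verifies their integrality by writing them in an integral basis of $H^2(X_\omega,\mathbb Z)$. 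Finally, one checks that the resulting overlattice $W'$ is unimodular: $|\mathrm{disc}(W)|=3^8\cdot 12\cdot 36\cdot 12=2^6\cdot 3^{12}$, so $[W':W]$ must equal $2^3\cdot 3^6=5832$; a direct check shows that the subgroup of the discriminant group $A_W$ generated by the seven vectors is isotropic of exactly this order, and uniqueness of even unimodular lattices of signature $(3,19)$ identifies $W'$ with $\Lambda_{\mathrm{K3}}$.

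The main obstacle is the bookkeeping of the seven glue vectors, which must be expressed in a form compatible with the abstract orthogonal decomposition of $W$ and shown to generate the correct isotropic subgroup of $A_W$; in particular, $\zeta$ and $\eta$ mix Néron-Severi classes with the transcendental generators $v_1,v_2$, and one must verify that the full integrality of the overlattice follows from the integrality of $u_1,\ldots,u_5$ in $NS(X_\omega)$ together with an explicit check involving $T(X_\omega)$. Moreover, once the glue vectors are in place, one must verify that $\tau^*$ and $\varphi^*$ extend integrally (not just rationally) to $W'$, i.e.\ that the chosen isotropic subgroup of $A_W$ is stable under the induced action on the discriminant group.
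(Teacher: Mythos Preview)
Your proposal is correct and follows essentially the same strategy as the paper: the paper's own proof is just two sentences, noting that the first five glue vectors $\alpha,\ldots,\varepsilon$ are the $u_1,\ldots,u_5$ of \eqref{generatoriNS} (already shown integral in $NS(X_\omega)$), while $\zeta,\eta$ arise from the unique description of $H^2(X_\omega,\mathbb Z)$ as an overlattice of $NS(X_\omega)\oplus T(X_\omega)$. Your elaboration of the discriminant/index check and the stability of the isotropic subgroup under $\tau^*,\varphi^*$ is a reasonable expansion of what the paper leaves implicit, but note that since the action is defined on the concrete $H^2(X_\omega,\mathbb Z)$ from the start, integrality of the extension to $W'$ is automatic rather than an independent verification.
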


\begin{proof}
The first five elements in \eqref{generatori} are the same as in \eqref{generatoriNS}, while $\zeta,\eta$ are obtained by the (unique) description of $H^2(X_\omega,\mathbb Z)$ as overlattice of $NS(X_\omega)\oplus T_{X_\omega}$.
\end{proof}

\subsection{Invariant and co-invariant lattices}\label{sec:azioni}

The group $(\mathbb Z/2\mathbb Z)^2$ acts symplectically in a unique way on the second integral cohomology lattice of a K3 surface \cite[Thm. 4.7]{Nikulin2}: the invariant and  co-invariant lattices for this action can be found in \cite[Prop. 4.3]{GS1}. Here we provide an explicit embedding of them in $H^2(X,\mathbb Z)$.

The invariant lattice $\Lambda_{\mathrm{K3}}^{\langle\tau,\varphi\rangle}$ is an overlattice of the lattice $I=A_2(4)\oplus A_2(2)^{\oplus 2}\oplus U(3)\oplus\begin{bmatrix} 4 & 2 \\ 2 & 4\end{bmatrix}=\langle a_1+b_1+c_1+d_1,\ a_2+b_2+c_2+d_2,\ e_1+f_1,\ e_2+f_2,\ g_1+h_1,\ g_2+h_2,\ x,\ y,\ v_1,\ v_2\rangle$ obtained by adding as generators the elements
$(v_1+v_2+g_1+h_1-g_2-h_2)/3,\ (a_1+b_1+c_1+d_1-(a_2+b_2+c_2+d_2)+e_1+f_1-e_2-f_2+x)/3, \gamma$; the co-invariant lattice $\Omega_{2,2}$ is an overlattice of the lattice
 \[ \Delta=\left[ \begin{tabular}{ c | c | c  }
  $A_2(2)^{\oplus 3}$ &0 & 0 \TBstrut\\
  \hline
  0 &$A_3(2)$ & $A_3$ \TBstrut\\
  \hline
  0 &$A_3$ & $A_3(2)$\TBstrut\\
\end{tabular} \right]\]
spanned over $\mathbb Z$ by $\{z,\ w,\ f_1-e_1,\ f_2-e_2,\ h_1-g_1,\ h_2-g_2,\ b_1-a_1,\ a_1-c_1,\ c_1-d_1,\ a_2-b_2,\ c_2-a_2,\ d_2-c_2\}$,
obtained by adding as generators the elements
$(a_1-b_1-c_1+d_1-a_2+b_2+c_2-d_2+z-w)/3,\ (-a_1+d_1+a_2-d_2-e_1+f_1+e_2-f_2-g_1+h_1+g_2-h_2)/3,\ (a_1+b_1-c_1-d_1-a_2-b_2+c_2+d_2-e_1+f_1+e_2-f_2)/3$. The discriminant group of $\Omega_{2,2}$ is $(\mathbb Z/2\mathbb Z)^6\times(\mathbb Z/4\mathbb Z)^2$. 
\begin{remark}
The lattice $\Omega_{2,2}$ contains three different copies of the lattice $\Omega_2\simeq E_8(2)$, co-invarant for $\tau,\ \varphi$ and $\rho$. 
\end{remark}

\section{Quotients}\label{sec:quotients_Klein}

For each of the abelian groups $G$ that act symplectically on a K3 surface $X$, Nikulin provides in \cite[\S 5-7]{Nikulin2} a description of the singular locus of the quotient surface $Y=X/G$, and of the \emph{exceptional lattice} $M_G$: this is the minimal primitive sublattice of $\Lambda_{\mathrm{K3}}$ containing all the exceptional curves of the minimal resolution $\tilde Y$ of $Y$. Denoting $q: X\rightarrow Y$ the quotient map, $H^2(\tilde Y,\mathbb Z)$ is an overlattice of finite index of $q_*H^2(X,\mathbb Z)\oplus M_G$.

\begin{remark}\label{unimodularity}
By \cite[Prop. 1.4.1.a]{Nikulin1}, since $H^2(\tilde Y,\mathbb Z)$ is unimodular and unique in its genus, there is essentially only one way to obtain it as overlattice of finite index of $q_*H^2(X,\mathbb Z)\oplus M_G$. This holds for $\Lambda_{\mathrm{K3}}$ more in general, given lattices $A,B$ in direct sum such that $sign(A)+sign(B)=(3,19)$ and their discriminant forms satisfy $q_A=-q_B$.
\end{remark}

\begin{definition}[\protect{\cite[Def. 6.2, case 1a]{Nikulin2}}] \label{NikulinLattice}
Denote \emph{Nikulin lattice} the lattice $N\coloneqq M_{\mathbb Z/2\mathbb Z}$: given $\{n_1,\dots , n_8\}$ orthogonal $(-2)$-classes, then a set of generators over $\mathbb Z$ for $N$ is $\{n_1,\dots , n_8, \nu\}$, where ${\nu=(n_1+\dots +n_8)/2}$.
\end{definition}

The symplectic action of $(\mathbb Z/2\mathbb Z)^2=\{1,\tau,\varphi,\rho\}$ on a K3 surface $X$ gives 24 isolated points with nontrivial stabilizer. Call $Fix_\tau=\{t_1,\dots,t_8\},\ Fix_\varphi=\{q_1,\dots,q_8\}, Fix_\rho=\{r_1,\dots,r_8\}$: then $\tau$ and $\varphi$ act on $Fix_\rho$ as the same permutation $(r_1,r_2)(r_3,r_4)(r_5,r_6)$ $(r_7,r_8)$; $\tau$ and $\rho$ act on $Fix_\varphi$ as $(q_1,q_2)(q_3,q_4)(q_5,q_6)(q_7,q_8)$, $\rho$ and $\varphi$ act on $Fix_\tau$ as $(t_1,t_2)(t_3,t_4)(t_5,t_6)(t_7,t_8)$.

Consider the quotient surfaces $Y=X/(\mathbb Z/2\mathbb Z)^2, \ Z_\tau=X/\tau$; 

resolve the singularities to obtain the K3 surfaces $\tilde Y,\ \tilde Z_\tau$: then $\varphi$ induces an involution $\hat\varphi$ on $Z_\tau$, and this involution can be extended to $\tilde Z_\tau$. The surfaces $\tilde Y$ and $\widetilde{\tilde Z_\tau/\hat\varphi}$ are isomorphic, as they are birationally equivalent K3 surfaces (and the same holds exchanging the roles of $\tau$ and $\varphi$). In the following sections, we are going to describe the maps induced in cohomology by those in the following diagram:
\begin{equation}\label{diagrammaquozienti}
\xymatrix{
\ &X \ar@{-->}[dr]^{\pi_{\varphi}}  \ar@{-->}[dl]_{\pi_{\tau}} &\ \\
\tilde Z_\tau \ar@{-->}[dr]_{\widehat{\pi_\varphi}}&\  &\tilde Z_\varphi \ar@{-->}[dl]^{\widehat{\pi_\tau}}\\
\ &\tilde Y &\
}\end{equation}

\subsection{The map $\pi_{\tau*}$ and the surface $\tilde Z_\tau$}\label{sec:pitau}

\begin{proposition}\label{pitau}
The map $\pi_{\tau*}$ acts in the following way on the sublattice $W$ of $H^2(X,\mathbb Z)$:\\
\[\adjustbox{scale=0.7,center}{\begin{tikzcd}[column sep=0.001pt] 
\begin{matrix}A_2\\ a_1,a_2\end{matrix} \arrow[dr]  &\begin{matrix}\oplus \\ \ \end{matrix}
&\begin{matrix}A_2\\ b_1,b_2\end{matrix}\arrow[dl]  &\begin{matrix}\oplus \\ \ \end{matrix} 
&\begin{matrix}A_2\\ c_1,c_2\end{matrix}\arrow[dr]  &\begin{matrix}\oplus \\ \ \end{matrix}
&\begin{matrix}A_2\\ d_1,d_2\end{matrix}\arrow[dl]  &\begin{matrix}\oplus \\ \ \end{matrix}
&\begin{matrix}A_2\\ e_1,e_2\end{matrix}\arrow[d] &\begin{matrix}\oplus \\ \ \end{matrix} 
&\begin{matrix}A_2\\ f_1,f_2\end{matrix} \arrow[d] &\begin{matrix}\oplus \\ \ \end{matrix} 
&\begin{matrix}A_2\\ g_1,g_2\end{matrix}\arrow[dr] &\begin{matrix}\oplus \\ \ \end{matrix}
&\begin{matrix}A_2\\ h_1,h_2\end{matrix} \arrow[dl] &\begin{matrix}\oplus \\ \ \end{matrix}
&\begin{matrix}A_2(2)\\ z,w\end{matrix}\arrow[d]  &\begin{matrix}\oplus \\ \ \end{matrix} 
&\begin{matrix}U(3)\\ x,y\end{matrix} \arrow[d]
&\begin{matrix}\oplus \\ \ \end{matrix} 
&\begin{matrix}\begin{bmatrix}4 &2\\2 &4\end{bmatrix}\\ v_1,v_2\end{matrix} \arrow[d] \\
\ &\begin{matrix}A_2\\ \hat a_1,\hat a_2\end{matrix}\arrow[rrrr,phantom, "\begin{matrix}\oplus \\ \ \end{matrix}"]  &\ &\ &\ 
&\begin{matrix}A_2\\ \hat c_1,\hat c_2\end{matrix}\arrow[rrr,phantom, "\begin{matrix}\oplus \\ \ \end{matrix}"]  &\ &\ 
&\begin{matrix}A_2(2)\\ \hat e_1,\hat e_2\end{matrix}\arrow[rr,phantom, "\begin{matrix}\oplus \\ \ \end{matrix}"]  &\  
&\begin{matrix}A_2(2)\\ \hat f_1,\hat f_2\end{matrix}\arrow[rrr,phantom, "\begin{matrix}\oplus \\ \ \end{matrix}"]  &\ &\ 
&\begin{matrix}A_2\\ \hat g_1,\hat g_2\end{matrix}\arrow[rrr,phantom, "\begin{matrix}\oplus \\ \ \end{matrix}"]  &\ &\ 
&\begin{matrix} 0\\ \ \end{matrix}\arrow[rr,phantom, "\begin{matrix}\oplus \\ \ \end{matrix}"]  &\  
&\begin{matrix}U(6)\\ \hat x,\hat y\end{matrix}\arrow[rr,phantom, "\begin{matrix}\oplus \\ \ \end{matrix}"]  &\ 
&\begin{matrix}\begin{bmatrix}8 &4\\4 &8\end{bmatrix}\\ \hat v_1,\hat v_2\end{matrix}
\end{tikzcd}}\]
\end{proposition}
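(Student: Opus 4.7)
The plan is to exploit two standard facts about the pushforward $\pi_{\tau*}: H^2(X,\mathbb{Z}) \to H^2(\tilde Z_\tau, \mathbb{Z})$. Since $\pi_\tau$ factors through the quotient $X/\tau$ and is an \'etale double cover off the codimension-$2$ fixed locus, one has $\pi_{\tau*} \circ \tau^* = \pi_{\tau*}$ and $\pi_\tau^*\pi_{\tau*} = \mathrm{id} + \tau^*$ on $H^2(X,\mathbb{Q})$. Combined with the projection formula, these give the key identity
\[
\pi_{\tau*}(\alpha) \cdot \pi_{\tau*}(\beta) \;=\; \alpha \cdot \beta \,+\, \alpha \cdot \tau^*(\beta),
\]
which determines all intersection numbers among the images of the generators of $W$.

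I would then verify the stated description block by block, using Proposition \ref{sublattice}(2) to locate where $\tau^*$ acts trivially, as $-\mathrm{id}$, or as a swap. Anti-invariant classes vanish: from $\tau^* z = -z$ one obtains $\pi_{\tau*}(z) = -\pi_{\tau*}(z) = 0$, and likewise $\pi_{\tau*}(w)=0$. On $\tau^*$-invariant summands the form doubles, so $\langle x, y \rangle = U(3)$ maps to $U(6) = \langle \hat x, \hat y \rangle$, the transcendental block $\langle v_1, v_2 \rangle$ has its Gram matrix multiplied by $2$, and each fixed $A_2$-pair $(e_i, f_i)$ becomes an $A_2(2)$ with generators $\hat e_i, \hat f_i$. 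For each pair of swapped $A_2$-summands, say the one generated by $a_1,a_2$ and the one by $b_1,b_2$ with $\tau^*: a_i \leftrightarrow b_i$, the relation $\pi_{\tau*} \circ \tau^* = \pi_{\tau*}$ gives $\hat a_i := \pi_{\tau*}(a_i) = \pi_{\tau*}(b_i)$, while the identity above yields $\hat a_i \cdot \hat a_j = a_i\cdot a_j + a_i\cdot b_j = a_i\cdot a_j$ (the two summands being orthogonal in $W$); the pair thus folds into a single copy of $A_2$. The cases $(c_i, d_i)$ and $(g_i, h_i)$ are identical.

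Orthogonality between the images of different summands is automatic: if $\alpha, \beta$ belong to distinct $\tau^*$-orbits among the orthogonal components of $W$, then $\alpha \cdot \beta = \alpha \cdot \tau^*(\beta) = 0$, whence $\pi_{\tau*}(\alpha) \cdot \pi_{\tau*}(\beta) = 0$. Together with the block-wise computations, this identifies $\pi_{\tau*}(W)$ with the orthogonal direct sum stated in the proposition and pairs each generator with its image.

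I do not expect any serious obstacle: the whole argument is bookkeeping once the formula $\pi_\tau^*\pi_{\tau*} = \mathrm{id} + \tau^*$ is in place. The only mild subtlety is that $\pi_\tau$ is only a rational map --- it becomes a morphism after blowing up the eight fixed points of $\tau$ on $X$ --- so the identities above are properly established on this resolved double cover and transferred back via the fact that $H^2(X,\mathbb{Z})$ sits as an orthogonal summand in $H^2$ of its blow-up, with the exceptional classes mapping to the classes spanning the Nikulin lattice $N$ inside $H^2(\tilde Z_\tau,\mathbb{Z})$ (which is disjoint from $\pi_{\tau*}(W)$).
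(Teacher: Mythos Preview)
Your proof is correct and follows essentially the same approach as the paper: both use the push-pull identity $\pi_\tau^*\pi_{\tau*}=\mathrm{id}+\tau^*$ together with the projection formula to compute intersection numbers on the image, and then read off the three cases (swapped, fixed, anti-invariant) from the description of $\tau^*$ on $W$ in Proposition~\ref{sublattice}. Your write-up is in fact slightly more detailed than the paper's --- you check orthogonality between distinct $\tau^*$-orbits and address the rational-map subtlety --- but the underlying argument is the same.
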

\begin{proof}
The action of $\tau^*$ on $W$ is described in Proposition \ref{sublattice}: 
we can use it to compute the intersection form of $\pi_{\tau*}W$ via the push-pull formula.
Since $\pi_{\tau}$ is a finite morphism of degree 2, for any $x_1,x_2\in W$ we get
\[\pi_{\tau*}x_1\cdot\pi_{\tau*}x_2=\frac{1}{2}(\pi^{*}_\tau\pi_{\tau*}x_1\cdot\pi^{*}_\tau\pi_{\tau*}x_2)\]
where $\pi^{*}_\tau\pi_{\tau*}x_1=x_1+\tau^*x_1$. Therefore, if $\tau^*$ exchanges two copies of $A_2$, $\pi_{\tau*}(A_2\oplus A_2)=A_2$; if $\tau^*$ acts as the identity on a lattice $L$, then $\pi_{\tau*}L=L(2)$; if $\tau^*$ acts as $-id$ on a lattice $L$, then $\pi_{\tau*}L=0$.
\end{proof}

We construct $H^2(\tilde Z_\tau, \mathbb Z)$ as overlattice of $N\oplus\pi_{\tau*}H^2(X,\mathbb Z)$: by Remark \ref{unimodularity} any way is equivalent up to isometries of $\Lambda_{\mathrm{K3}}$. For convenience, we give here the list of generators we're going to use in the following computations:
\begin{align}\label{generatoriZtau}
s_1&=(\hat c_1-\hat c_2+\hat e_2+\hat f_2+\hat \gamma-\hat \varepsilon+n_5-n_8+n_3+n_2)/2-n_8,\nonumber\\
s_2&=(\hat a_1-\hat a_2-\hat \alpha+\hat f_1+\hat f_2-\hat \varepsilon+n_4-n_8+n_3+n_2)/2-n_8 ,\nonumber\\
s_3&=(\hat e_2+\hat f_1+n_7+n_5+n_4+n_3)/2-2n_8 ,\\
s_4&=(\hat c_1-\hat c_2+\hat e_2+\hat f_1-\hat \varepsilon+n_7-n_8+n_5+n_4)/2-n_8,\nonumber\\
s_5&=(\hat a_1-\hat a_2+\hat c_1-\hat c_2-\hat \alpha+n_6+n_5+n_4+n_2)/2-2n_8,\nonumber\\    
s_6&=(\hat a_1-\hat a_2+\hat c_1-\hat c_2-\hat \alpha+\hat f_1+n_7-n_8+n_6+n_3)/2-n_8.\nonumber
\end{align}

\begin{remark}\label{R22}
The lattice $\pi_{\tau*}\Omega_{2,2}$ is isomorphic to $D_4(2)$ with the following generators: $d_1=(\hat e_2-\hat f_2+\hat f_1-\hat e_1+\hat c_1-\hat a_1-\hat c_2+\hat a_2)/3-\hat f_1+\hat e_1,\ d_2=(\hat e_2-\hat f_2+\hat f_1-\hat e_1+\hat c_1-\hat a_1-\hat c_2+\hat a_2)/3,\ d_3=\hat a_1-\hat c_1,\ d_4=\hat c_1-\hat a_1+\hat c_2-\hat a_2$.
\end{remark}

\begin{definition}\label{def:Gammatau}
Define the lattice $\Gamma_\tau$ as the lattice of rank 12 obtained as primitive completion of $\pi_{\tau*}\Omega_{2,2}\oplus N$ in $H^2(\tilde Z_\tau,\mathbb Z)$.
With the latter constructed as in \eqref{generatoriZtau}, $\Gamma_\tau$ is obtained as overlattice of $\pi_{\tau*}\Omega_{2,2}\oplus N$ by adding as generators the elements
\[
x_1=(d_4-d_2+n_2+n_4+n_5+n_6)/2,\quad
x_2=(d_1-d_2+n_3+n_7+n_2+n_5)/2.\]
\end{definition}

\subsection{The map $\pi_{\varphi*}$ and the surface $\tilde Z_\varphi$}\label{sec:piphi}

The action of $\varphi^*$ on the sublattice $W$ of $H^2(X,\mathbb Z)$ is different that that of $\tau^*$: we provide an analogue to Proposition \ref{pitau} for the quotient map $\pi_{\varphi*}$.

\begin{proposition}\label{piphi}
The map $\pi_{\varphi*}$ acts in the following way on the sublattice $W$ of $H^2(X,\mathbb Z)$:\\
\[\adjustbox{scale=0.7,center}{\begin{tikzcd}[column sep=0.001pt] 
\begin{matrix}A_2\\ a_1,a_2\end{matrix} \arrow[dr]  &\begin{matrix}\oplus \\ \ \end{matrix}
&\begin{matrix}A_2\\ d_1,d_2\end{matrix}\arrow[dl]  &\begin{matrix}\oplus \\ \ \end{matrix} 
&\begin{matrix}A_2\\ b_1,b_2\end{matrix}\arrow[dr]  &\begin{matrix}\oplus \\ \ \end{matrix}
&\begin{matrix}A_2\\ c_1,c_2\end{matrix}\arrow[dl]  &\begin{matrix}\oplus \\ \ \end{matrix}
&\begin{matrix}A_2\\ e_1,e_2\end{matrix}\arrow[dr]  &\begin{matrix}\oplus \\ \ \end{matrix}
&\begin{matrix}A_2\\ f_1,f_2\end{matrix}\arrow[dl]  &\begin{matrix}\oplus \\ \ \end{matrix}
&\begin{matrix}A_2\\ g_1,g_2\end{matrix}\arrow[dr] &\begin{matrix}\oplus \\ \ \end{matrix}
&\begin{matrix}A_2\\ h_1,h_2\end{matrix} \arrow[dl] &\begin{matrix}\oplus \\ \ \end{matrix}
&\begin{matrix}A_2(2)\\ z,w\end{matrix}\arrow[d]  &\begin{matrix}\oplus \\ \ \end{matrix} 
&\begin{matrix}U(3)\\ x,y\end{matrix} \arrow[d]
&\begin{matrix}\oplus \\ \ \end{matrix} 
&\begin{matrix}\begin{bmatrix}4 &2\\2 &4\end{bmatrix}\\ v_1,v_2\end{matrix} \arrow[d] \\
\ &\begin{matrix}A_2\\ \tilde a_1,\tilde a_2\end{matrix}\arrow[rrrr,phantom, "\begin{matrix}\oplus \\ \ \end{matrix}"]  &\ &\ &\ 
&\begin{matrix}A_2\\ \tilde b_1,\tilde b_2\end{matrix}\arrow[rrr,phantom, "\begin{matrix}\oplus \\ \ \end{matrix}"]  &\ &\  &\
&\begin{matrix}A_2\\ \tilde e_1,\tilde e_2\end{matrix}\arrow[rrr,phantom, "\begin{matrix}\oplus \\ \ \end{matrix}"]  &\ &\ &\
&\begin{matrix}A_2\\ \tilde g_1,\tilde  g_2\end{matrix}\arrow[rrr,phantom, "\begin{matrix}\oplus \\ \ \end{matrix}"]  &\ &\ 
&\begin{matrix}A_2(4)\\ \tilde z,\tilde w\end{matrix}\arrow[rr,phantom, "\begin{matrix}\oplus \\ \ \end{matrix}"]  &\ 
&\begin{matrix}U(6)\\ \tilde x,\tilde y\end{matrix}\arrow[rr,phantom, "\begin{matrix}\oplus \\ \ \end{matrix}"]  &\ 
&\begin{matrix}\begin{bmatrix}8 &4\\4 &8\end{bmatrix}\\ \tilde v_1,\tilde v_2\end{matrix}
\end{tikzcd}}\]
\end{proposition}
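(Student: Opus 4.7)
The plan is to mirror verbatim the strategy used for Proposition \ref{pitau}, substituting the $\varphi^*$-action recorded in Proposition \ref{sublattice} for the $\tau^*$-action. Since $\pi_\varphi$ is a finite degree-$2$ morphism, for every pair $x_1, x_2 \in W$ the push-pull identity gives
\[\pi_{\varphi*}x_1 \cdot \pi_{\varphi*}x_2 = \tfrac{1}{2}(x_1 + \varphi^*x_1)(x_2 + \varphi^*x_2),\]
and I would apply this summand by summand through $W$, exploiting the fact that each orthogonal summand of $W$ is $\varphi^*$-stable.

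First I would handle the eight $A_2$-summands. Proposition \ref{sublattice} records $\varphi^*$ as exchanging the four pairs $(a_i, d_i)$, $(b_i, c_i)$, $(e_i, f_i)$, $(g_i, h_i)$, so each such pair collapses under the push-pull formula to a single copy of $A_2$, generated by the push-forwards of its generators (renamed $\tilde a_i, \tilde b_i, \tilde e_i, \tilde g_i$ in the statement). Next I would handle the three $\varphi^*$-invariant summands: $A_2(2)$, $U(3)$, and the transcendental block $\begin{bmatrix}4 & 2 \\ 2 & 4\end{bmatrix}$. On any sublattice $L$ on which $\varphi^*$ acts as the identity, the push-pull formula yields $\pi_{\varphi*}L = L(2)$, producing respectively $A_2(4)$, $U(6)$, and $\begin{bmatrix}8 & 4 \\ 4 & 8\end{bmatrix}$. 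Pairwise orthogonality of the seven resulting summands is automatic from the same formula, since the summands of $W$ are already pairwise orthogonal and $\varphi^*$-stable.

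There is no serious obstacle: the entire computation is routine linear algebra using the push-pull identity. The one meaningful contrast with Proposition \ref{pitau} is that $\varphi^*$ fixes $A_2(2)$ whereas $\tau^*$ acted on it as $-\mathrm{id}$; consequently this summand survives as $A_2(4)$ rather than collapsing to $0$, raising the rank of $\pi_{\varphi*}W$ to $16$ instead of $14$. The different pairing of the eight $A_2$-summands likewise produces a distinct arrangement in the codomain, but no new computational difficulty arises.
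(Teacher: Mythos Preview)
Your argument is correct and follows exactly the approach the paper uses for Proposition~\ref{pitau}: apply the push-pull formula with the $\varphi^*$-action from Proposition~\ref{sublattice}, noting that exchanged $A_2$-pairs collapse to a single $A_2$ while the $\varphi^*$-fixed summands $A_2(2)$, $U(3)$, and the transcendental block get rescaled by $2$. The only minor imprecision is your phrase ``each orthogonal summand of $W$ is $\varphi^*$-stable''---the individual $A_2$'s are not, only the exchanged pairs are---but your subsequent case analysis handles this correctly.
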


As above, we give the list of generators we used to construct $H^2(\tilde Z_\varphi, \mathbb Z)$ as overlattice of $N\oplus\pi_{\varphi*}H^2(X,\mathbb Z)$:
\begin{align}\label{generatoriZphi}
t_1&=(\tilde b_1+\tilde e_2+\tilde g_1+\tilde g_2-\tilde\varepsilon+\tilde \gamma+\tilde y+\tilde\eta)/2+(n_2+n_3+n_5+n_8)/2;\nonumber\\
t_2&=(\tilde a_1+\tilde a_2+\tilde\delta+\tilde z-\tilde\varepsilon)/2+(n_2+n_3+n_4+n_8)/2;\nonumber\\
t_3&=(\tilde\gamma+\tilde y)/2+(n_3+n_4+n_5+n_7)/2;\\          
t_4&=(\tilde a_1+\tilde a_2+\tilde\delta+\tilde\varepsilon+\tilde y)/2+(n_4+n_5+n_7+n_8)/2;\nonumber\\
t_5&=(\tilde b_1+\tilde e_2+\tilde g_1+\tilde g_2+\tilde\varepsilon+\tilde\zeta)/2+(n_2+n_4+n_5+n_6)/2;\nonumber\\ 
t_6&=(\tilde a_1+\tilde a_2+\tilde\delta+\tilde\varepsilon)/2+(n_3+n_6+n_7+n_8)/2.\nonumber
\end{align}

\begin{remark}
The lattice $\pi_{\varphi*}\Omega_{2,2}$ is isomorphic to $D_4(2)$ with the following generators: $d'_1=(2\tilde b_2-2\tilde a_2+\tilde z-\tilde w+\tilde b_1-\tilde a_1)/3,\ d'_2=\tilde w+(2\tilde b_2-2\tilde a_2+\tilde z-\tilde w+\tilde b_1-\tilde a_1)/3,\ d'_3=\tilde a_2-\tilde b_2,\ d'_4=\tilde a_1-\tilde b_1$.
\end{remark}
\begin{definition}
Define the lattice $\Gamma_\varphi$ as the overlattice of $N\oplus\pi_{\varphi*}\Omega_{2,2}$ obtained by adding to the set of generators the elements
\[x'_1=(d'_2-d'_1+n_2+n_3+n_4+n_8)/2,\quad x'_2=(d'_2+d'_4+n_3+n_6+n_7+n_8)/2;\]
\end{definition}
\begin{lemma}\label{prop:Gamma_22phi}
The lattices $\Gamma_{\tau}$ and $\Gamma_\varphi$ are isomorphic.
\end{lemma}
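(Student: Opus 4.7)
The plan is to exhibit an explicit isometry $\Phi\colon \Gamma_\tau\to\Gamma_\varphi$. The key structural observation is that both lattices are overlattices of index $4$ of $D_4(2)\oplus N$: via the identifications $\pi_{\tau*}\Omega_{2,2}\simeq D_4(2)\simeq \pi_{\varphi*}\Omega_{2,2}$ supplied by the remarks immediately preceding Definition \ref{def:Gammatau} and Definition of $\Gamma_\varphi$, the classes $\bar x_1,\bar x_2$ and $\bar x'_1,\bar x'_2$ are isotropic elements of order $2$ in the discriminant group $A_{D_4(2)\oplus N}=A_{D_4(2)}\oplus A_N$ that each span an isotropic subgroup of order $4$. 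Consequently, showing that $\Gamma_\tau\simeq\Gamma_\varphi$ is equivalent to showing that these two isotropic subgroups lie in the same orbit of $O(D_4(2)\oplus N)$ acting on $A_{D_4(2)\oplus N}$.

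I would then construct the isometry in two steps. First, I look for a permutation $\sigma$ of $\{1,\dots,8\}$ that carries the supports of the $N$-parts of $x_1,x_2$ to the supports of the $N$-parts of $x'_1,x'_2$, namely $\{2,4,5,6\}\mapsto\{2,3,4,8\}$ and $\{2,3,5,7\}\mapsto\{3,6,7,8\}$. A direct search yields, for instance, $\sigma=(2\,3\,6\,4)(5\,8)$ fixing $1$ and $7$. Since every permutation of the eight nodes of $N$ preserves $\nu=(n_1+\dots+n_8)/2$, this induces an isometry $\sigma_N$ of $N$. Second, I look for an automorphism $\psi\in O(D_4(2))$ sending the pair of discriminant classes $(\bar d_4-\bar d_2,\bar d_1-\bar d_2)$ to $(\bar d'_2-\bar d'_1,\bar d'_2+\bar d'_4)$. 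Such a $\psi$ is available: $O(D_4(2))=O(D_4)$ contains the full Weyl group and the triality automorphisms, and it is routine to check that these two pairs have the same Gram matrix, hence lie in the same $O(D_4(2))$-orbit. The combined map $\psi\oplus\sigma_N$ on $D_4(2)\oplus N$ then sends $\langle\bar x_1,\bar x_2\rangle$ to $\langle\bar x'_1,\bar x'_2\rangle$ and hence extends uniquely by $\mathbb Q$-linearity to an isometry $\Gamma_\tau\to\Gamma_\varphi$.

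The main obstacle I anticipate is the combinatorial bookkeeping required in the second step: finding $\sigma$ and $\psi$ simultaneously compatible with both glue vectors, rather than with the $N$- and $D_4(2)$-components separately. If the naive choice of $\sigma$ above produces a residual mismatch in the $D_4(2)$-part, the remedy is to change the $\mathbb F_2$-basis $(\bar x_1,\bar x_2)$ of the glue subgroup by an element of $\mathrm{GL}(2,\mathbb F_2)$, and/or to translate each $x_i$ by an element of $D_4(2)\oplus N$: since the overlattice depends only on the isotropic subgroup and not on a choice of generators, this freedom is exactly what is needed to complete the match. As a safety net, one can alternatively compute the discriminant form of each $\Gamma$, verify they agree, and invoke Nikulin's uniqueness theorem \cite[Cor. 1.13.3]{Nikulin1} for primitive embeddings into $\Lambda_{\mathrm{K3}}$ to deduce the isomorphism abstractly.
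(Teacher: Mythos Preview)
Your main line is correct and coincides with the paper's: both $\Gamma_\tau$ and $\Gamma_\varphi$ are index-$4$ overlattices of $D_4(2)\oplus N$, and the task is to show the two isotropic glue subgroups lie in the same $O(D_4(2))\times O(N)$-orbit. The paper's write-up is shorter than yours: rather than exhibiting specific $\sigma$ and $\psi$, it simply records that the orbits of $O(D_4(2))$ on $A_{D_4(2)}$ and of $O(N)$ on $A_N$ are classified by the pair (order, square), and concludes that the gluings are equivalent because the glue components on either side have matching invariants. Your explicit construction (the permutation $\sigma=(2\,3\,6\,4)(5\,8)$ does indeed carry the $N$-supports $\{2,4,5,6\},\{2,3,5,7\}$ to $\{2,3,4,8\},\{3,6,7,8\}$) is a valid alternative; it is heavier but arguably more transparent about the simultaneous matching of the two glue vectors, a point the paper's orbit statement leaves implicit.

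One caution on your safety net: $\Gamma_\tau$ and $\Gamma_\varphi$ are negative definite of rank $12$, so Nikulin's Corollary~1.13.3 (uniqueness in a genus for \emph{indefinite} lattices) does not apply to them directly. Equal discriminant forms do not force isomorphism for definite lattices. To salvage that route you would have to pass to the orthogonal complements in $\Lambda_{\mathrm{K3}}$, which have signature $(3,7)$ and are therefore indefinite, show those are isomorphic and uniquely primitively embedded, and then recover $\Gamma_\tau\simeq\Gamma_\varphi$ as the complements of isometric sublattices under an isometry of $\Lambda_{\mathrm{K3}}$.
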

\begin{proof}
The lattices $\pi_{\tau*}\Omega_{2,2}$ and $\pi_{\varphi*}\Omega_{2,2}$ are both isomorphic to $D_4(2)$. Moreover, the gluings that realizes $\Gamma_{\tau}$ as an overlattice of $\pi_{\tau*}\Omega_{2,2}\oplus N$, and $\Gamma_\varphi$ as an overlattice of $\pi_{\varphi*}\Omega_{2,2}\oplus N$, are isomorphic: indeed, one can easily check that the orbits for the action of $O(D_4(2))$ on $A_{D_4(2)}$, and of $O(N)$ on $A_N$, are determined by the order and square of their elements. 
\end{proof}

\begin{definition}\label{def:Gamma22}
Consider the symplectic action of $G=(\mathbb Z/2\mathbb Z)^2$ on a K3 surface $X$, let $\iota\in G$ be any involution and consider $\tilde Z$ the resolution of singularities of $X/\iota$. Define the lattice $\Gamma_{2,2}$ as the primitive completion of $N\oplus\pi_{\iota*}\Omega_{2,2}$ in $H^2(\tilde Z,\mathbb Z)$.
\end{definition}

\subsection{The surface $\tilde Y$ as quotient of $\tilde Z_\tau$}\label{sec:hatrho}

We conclude with the description of the K3 surface $\tilde Y$, which is the minimal resolution of the quotient $X/(\mathbb Z/2\mathbb Z)^2$. We obtain $\tilde Y$ as minimal resolution of the quotient of either $\tilde Z_\tau, \tilde Z_\varphi$ by the residual symplectic involution $\hat\varphi,\hat\tau$ respectively.

The residual involution $\hat\varphi$ fixes eight isolated points on $\tilde Z_\tau$, and it acts on the exceptional curves introduced by the resolution $\tilde Z_\tau\rightarrow Z_\tau$ (which are represented by the classes $n_1,\dots, n_8$ in $NS(\tilde Z_\tau)$) by exchanging them pairwise.
\begin{proposition}\label{pirho}
Consider the sublattice $\pi_{\tau*}W\oplus A_1^{\oplus 8}$ of finite index of $H^2(\tilde Z_\tau, \mathbb Z)$: the map $ \widehat{\pi_{\varphi}}_*$ acts in the following way on it:\\
\[\adjustbox{scale=0.7,center}{\begin{tikzcd}[column sep=0.001pt] 
\begin{matrix}A_2\\ \hat a_1,\hat a_2\end{matrix} \arrow[dr]  &\begin{matrix}\oplus \\ \ \end{matrix}
&\begin{matrix}A_2\\ \hat c_1,\hat c_2\end{matrix}\arrow[dl]  &\begin{matrix}\oplus \\ \ \end{matrix} 
&\begin{matrix}A_2(2)\\ \hat e_1,\hat e_2\end{matrix}\arrow[dr]  &\begin{matrix}\oplus \\ \ \end{matrix}
&\begin{matrix}A_2(2)\\ \hat f_1,\hat f_2\end{matrix}\arrow[dl]  &\begin{matrix}\oplus \\ \ \end{matrix}
&\begin{matrix}A_2\\ \hat g_1,\hat g_2\end{matrix}\arrow[d] &\begin{matrix}\oplus \\ \ \end{matrix} 
&\begin{matrix}U(6)\\ \hat x,\hat y\end{matrix} \arrow[d]
&\begin{matrix}\oplus \\ \ \end{matrix} 
&\begin{matrix}\begin{bmatrix}8 &4\\4 &8\end{bmatrix}\\ \hat v_1,\hat v_2\end{matrix} \arrow[d] &\begin{matrix}\oplus \\ \ \end{matrix}
&\begin{matrix}A_1^{\oplus 2} \\ n_1,n_8 \end{matrix} \arrow[d] &\begin{matrix}\oplus \\ \ \end{matrix}
&\begin{matrix}A_1^{\oplus 2} \\ n_2,n_5 \end{matrix} \arrow[d]  &\begin{matrix}\oplus \\ \ \end{matrix}
&\begin{matrix}A_1^{\oplus 2} \\ n_3,n_7 \end{matrix} \arrow[d] &\begin{matrix}\oplus \\ \ \end{matrix}
&\begin{matrix}A_1^{\oplus 2} \\ n_4,n_6 \end{matrix} \arrow[d]\\
\ &\begin{matrix}A_2\\ \overline a_1,\overline a_2\end{matrix}\arrow[rrrr,phantom, "\begin{matrix}\oplus \\ \ \end{matrix}"]  &\ &\ &\ 
&\begin{matrix}A_2(2)\\ \overline e_1,\overline e_2\end{matrix}\arrow[rrr,phantom, "\begin{matrix}\oplus \\ \ \end{matrix}"]  &\ &\ 
&\begin{matrix}A_2(2)\\ \overline g_1,\overline g_2\end{matrix}\arrow[rr,phantom, "\begin{matrix}\oplus \\ \ \end{matrix}"]  &\  
&\begin{matrix}U(12)\\ \overline x,\overline y\end{matrix}\arrow[rr,phantom, "\begin{matrix}\oplus \\ \ \end{matrix}"]  &\ 
&\begin{matrix}\begin{bmatrix}16 &8\\8 &16\end{bmatrix}\\ \overline v_1,\overline v_2\end{matrix} &\begin{matrix}\oplus \\ \ \end{matrix}
&\begin{matrix}A_1\\ \overline n_1\end{matrix}&\begin{matrix}\oplus \\ \ \end{matrix}
&\begin{matrix}A_1\\ \overline n_2\end{matrix}&\begin{matrix}\oplus \\ \ \end{matrix}
&\begin{matrix}A_1\\ \overline n_3\end{matrix}&\begin{matrix}\oplus \\ \ \end{matrix}
&\begin{matrix}A_1\\ \overline n_4\end{matrix}
\end{tikzcd}}\]
The lattice $\widehat{\pi_{\varphi}}_*H^2(\tilde Z_\tau, \mathbb Z)$ can be obtained by $\mathbb Q$-linear extension to the elements $\hat\alpha,\hat\gamma,\hat\varepsilon,\hat\zeta,\hat\eta$ which are the image via $\pi_{\tau*}$ of the elements in \eqref{generatori}, and $\nu, s_1,\dots, s_6$ defined in \eqref{generatoriZtau}. The symbol $\overline{\star}$ denotes the image of $\star$ in $\widehat{\pi_{\varphi}}_*H^2(\tilde Z_\tau, \mathbb Z)$. 
\end{proposition}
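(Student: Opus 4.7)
The strategy parallels that of Proposition \ref{pitau}. Since $\tau$ and $\varphi$ commute on $X$, the involution $\varphi$ descends to $\hat\varphi$ on $Z_\tau=X/\tau$, and one has $\pi_\tau\circ\varphi=\hat\varphi\circ\pi_\tau$ as rational maps. Passing to cohomology, this gives the key commutation relation
\[
\hat\varphi^*\circ\pi_{\tau*}=\pi_{\tau*}\circ\varphi^*,
\]
so the action of $\hat\varphi^*$ on $\pi_{\tau*}W$ is determined by the description of $\varphi^*$ on $W$ in Proposition \ref{sublattice} together with Proposition \ref{pitau}. For instance, since $\varphi^*$ swaps $a_i\leftrightarrow d_i,\ b_i\leftrightarrow c_i$, the involution $\hat\varphi^*$ swaps $\hat a_i\leftrightarrow\hat c_i$; similarly $\hat e_i\leftrightarrow\hat f_i$; the class $\hat g_i$ is fixed (both $\tau^*$ and $\varphi^*$ swap $g_i\leftrightarrow h_i$); and $\hat x,\hat y,\hat v_1,\hat v_2$ are fixed. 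On the exceptional curves $n_1,\dots,n_8$ the involution $\hat\varphi^*$ acts by the permutation induced on $Fix_\tau$ by $\varphi$; tracing this through the labels chosen in \eqref{generatoriZtau} yields the pairing $(n_1,n_8),(n_2,n_5),(n_3,n_7),(n_4,n_6)$ displayed in the statement.

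Once $\hat\varphi^*$ is in hand, the intersection form on $\widehat{\pi_\varphi}_*(\pi_{\tau*}W\oplus A_1^{\oplus 8})$ is computed through the push-pull identity
\[
\widehat{\pi_\varphi}_*x_1\cdot\widehat{\pi_\varphi}_*x_2=\tfrac{1}{2}\bigl(x_1+\hat\varphi^*x_1\bigr)\cdot\bigl(x_2+\hat\varphi^*x_2\bigr),
\]
exactly as in the proof of Proposition \ref{pitau}. When $\hat\varphi^*$ exchanges two orthogonal copies of $A_2$ (respectively $A_2(2)$ or $A_1$), the image is a single $A_2$ (resp.\ $A_2(2)$ or $A_1$); when $\hat\varphi^*$ acts as the identity on a sublattice $L$, the image is $L(2)$. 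Applying this pattern to each summand produces the Gram matrices in the table, in particular the blocks $U(6)\mapsto U(12)$, $\begin{bmatrix}8 &4\\4&8\end{bmatrix}\mapsto\begin{bmatrix}16&8\\8&16\end{bmatrix}$, and $\hat g\mapsto \overline g$ of type $A_2(2)$.

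Finally, $H^2(\tilde Z_\tau,\mathbb Z)$ is an overlattice of $\pi_{\tau*}W\oplus A_1^{\oplus 8}$ with extra generators $\hat\alpha,\hat\gamma,\hat\varepsilon,\hat\zeta,\hat\eta$ (the $\pi_{\tau*}$-images of the classes in \eqref{generatori}) together with $\nu$ and $s_1,\dots,s_6$ from \eqref{generatoriZtau}. Since $\widehat{\pi_\varphi}_*$ is a group homomorphism, its values on these extra generators are obtained by $\mathbb Q$-linear extension of the formulae above; one then checks that the resulting classes $\overline{\star}$ are integral elements of $\widehat{\pi_\varphi}_*H^2(\tilde Z_\tau,\mathbb Z)$ and generate it as an overlattice of the lattice displayed in the diagram.

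The only subtle step is the bookkeeping on the exceptional divisors: matching the numbering of the $n_i$ with the specific pairing in the statement requires tracking how $\varphi$ permutes the fixed points of $\tau$ through the explicit resolution and through the choice of glue vectors used to define the $s_i$ in \eqref{generatoriZtau}. All remaining computations are linear-algebraic verifications using the push-pull formula.
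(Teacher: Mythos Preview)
Your treatment of $\widehat{\pi_\varphi}_*$ on $\pi_{\tau*}W$ via the commutation relation $\hat\varphi^*\circ\pi_{\tau*}=\pi_{\tau*}\circ\varphi^*$ and the push-pull formula is exactly the argument the paper has in mind, and the casework on the summands is correct.

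The one point where your argument diverges from the paper is the determination of the pairing on the exceptional classes $n_1,\dots,n_8$. You propose to obtain $(n_1,n_8)(n_2,n_5)(n_3,n_7)(n_4,n_6)$ by ``tracing'' the permutation of $Fix_\tau$ through an explicit resolution. But in the paper the overlattice $H^2(\tilde Z_\tau,\mathbb Z)$ is constructed \emph{abstractly}: by Remark~\ref{unimodularity} any gluing of $\pi_{\tau*}H^2(X,\mathbb Z)\oplus N$ to a unimodular lattice is equivalent, and the specific labels $n_1,\dots,n_8$ carry no geometric meaning beyond the particular glue vectors $s_1,\dots,s_6$ chosen in \eqref{generatoriZtau}. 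There is therefore no resolution to trace through; the only handle on which $n_i$ are exchanged is the compatibility with the $s_j$.

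The paper's proof makes precisely this point: it determines the pairing by imposing that $\widehat{\pi_\varphi}_*s_1,\dots,\widehat{\pi_\varphi}_*s_6$, computed via push-pull, span an \emph{integral even} lattice. Among the candidate pairwise permutations of the $n_i$ this singles out the pairing in the statement. Your last paragraph gestures at the glue vectors, but you should replace the appeal to the geometry of the resolution by this integrality constraint; that is the actual mechanism that pins down the permutation.
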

\begin{proof}
The only difficult thing is to determine which are the pairs of classes exchanged by $\widehat{\pi_{\varphi}}_*|_{A_1^{\oplus 8}}$. To do this, we need to ensure that the the intersection form of the images of $s_1,\dots, s_6$ via $\widehat{\pi_{\varphi}}_*$  computed with the  push-pull formula, is that of an integral even lattice: the only valid choice is the one in the statement. 
\end{proof}

\begin{remark}\label{Omega2inGamma22}
Since $\hat\varphi$ is a symplectic involution on $\tilde Z_\tau$, its co-invariant lattice is a copy of $\Omega_2=E_8(2)$: this is entirely contained in $\Gamma_{2,2}$, as the orthogonal complement of $\langle n_1+n_8, n_2+n_5, n_3+n_7, n_4+n_6, \hat e_1+\hat f_1,\hat e_2+\hat f_2, \hat a_1+\hat c_1, \hat a_2+\hat c_2 \rangle$.
\end{remark}

\begin{remark}\label{gammamezzi}
Consider the map $(\pi_{2,2})_*$, defined as the composition $\widehat{\pi_{\varphi}}_*\circ\pi_{\tau*}$: then $(\pi_{2,2})_*H^2(X,\mathbb Z)$ is a sublattice of index 2 of $\widehat{\pi_{\varphi}}_*H^2(\tilde Z_\tau, \mathbb Z)$. Indeed, it does not contain the element $\overline\gamma/2$, that is integral in $\widehat{\pi_{\varphi}}_*H^2(\tilde Z_\tau, \mathbb Z)$: in fact, it holds
\[\widehat{\pi_{\varphi}}_*(s_1+s_3+s_4)=\overline\gamma/2+(\overline a_1-\overline a_2+\overline e_1+2\overline e_2-\overline\varepsilon+2\overline n_2+\overline n_4+2\overline n_3-5\overline n_1).\]
\end{remark}

The resolution of the singularities $\tilde Y\rightarrow \tilde Z_\tau/\hat\varphi$ introduces in cohomology another copy of the lattice $N$. Calling $m_1,\dots m_8$ the $(-2)$-classes that generate $N$ over $\mathbb Q$, we construct $H^2(\tilde Y,\mathbb Z)$ as overlattice of $\widehat{\pi_{\varphi}}_*H^2(\tilde Z_\tau, \mathbb Z)\oplus N$ using the following elements:
\begin{align}\label{generatoriY}
k_1&=(\overline a_2+\overline e_1+\overline g_2+\overline \eta)/2+(m_2+m_3+m_5+m_8)/2,\nonumber \\
k_2&=(\overline g_1+\overline\eta+\overline\zeta)/2+(m_2+m_3+m_4+m_8)/2,\nonumber \\
k_3&=(\overline a_1+\overline a_2+\overline g_1+\overline s_1+\overline \varepsilon+\overline s_3+\overline s_4+\overline n_8)/2+(m_3+m_4+m_5+m_7)/2, \\
k_4&=(\overline a_2+\overline e_1+\overline \zeta)/2+(m_4+m_5+m_7+m_8)/2,\nonumber \\
k_5&=(\overline a_2+\overline e_2+\overline s_1+\overline\zeta+\overline s_3+\overline s_4+\overline n_8)/2+(m_2+m_4+m_5+m_6)/2,\nonumber \\
k_6&=(\overline a_1+\overline e_2+\overline \varepsilon+\overline \zeta)/2+(m_3+m_6+m_7+m_8)/2.\nonumber
\end{align}

\subsection{The exceptional lattice $M_{2,2}$ and the map $\pi_{2,2*}$}\label{sec:M22}

The lattice $M_{2,2}$, as described in \cite[\S 6, case 2a]{Nikulin2}, is an overlattice of $A_1^{\oplus 12}=\langle v_1,\dots, v_{12}\rangle$ obtained by adding as generator the elements $(v_1+\dots+v_8)/2$ and $(v_5+\dots v_{12})/2$. An explicit embedding of $M_{2,2}$ in $H^2(\tilde Y,\mathbb Z)$ constructed as in \eqref{generatoriY}, as overlattice of $N\oplus\widehat{\pi_{\rho}}_*\Gamma_{2,2}=N\oplus\langle\overline n_1,\overline n_2, \overline n_3, \overline n_4\rangle$, can be obtained as follows.

\begin{proposition}\label{prop:M22}
The lattice $M_{2,2}$ is generated over $\mathbb Q$ by the elements $\overline n_1,\dots,\overline n_4,$ $m_1,\dots, m_8$. To get a set of $\mathbb Z$-generators, add the elements
\[\mu_1=\frac{m_1+\dots+ m_8}{2},\quad\mu_2=\frac{\overline n_1+\overline n_2+\overline n_3+\overline n_4+m_1+m_2+m_7+m_8}{2}.\]
\end{proposition}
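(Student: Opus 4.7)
The plan is to identify the 12 orthogonal $(-2)$-classes that resolve the singular locus of $Y$, then verify that $\mu_1, \mu_2$ are integral classes in $H^2(\tilde Y, \mathbb Z)$, and finally match the resulting overlattice with Nikulin's abstract description recalled just before the statement.

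First I would enumerate the exceptional curves of $\tilde Y \to Y$. The twelve $A_1$-singularities of $Y$ split into two groups: four come from the $\tau$-fixed points on $X$ (whose eight exceptional curves $n_1, \ldots, n_8$ on $\tilde Z_\tau$ are paired in four $\hat\varphi$-orbits, producing the four classes $\overline n_1, \ldots, \overline n_4$ described in Proposition \ref{pirho}), and eight come from the $\hat\varphi$-fixed points on $\tilde Z_\tau$ (producing the classes $m_1, \ldots, m_8$ of the Nikulin lattice $N$ introduced in the construction of $\tilde Y \to \tilde Z_\tau/\hat\varphi$). These twelve classes are pairwise orthogonal $(-2)$-curves lying in $M_{2,2}$, so they span $M_{2,2}$ over $\mathbb Q$ by a rank count, since $M_{2,2}$ has rank $12$ in Nikulin's description.

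Next I would verify integrality. The element $\mu_1 = (m_1+\cdots+m_8)/2$ is integral essentially by definition, since it is exactly the half-sum that promotes $\langle m_1,\ldots,m_8\rangle$ to the Nikulin lattice $N = M_{\mathbb Z/2\mathbb Z}$ of Definition \ref{NikulinLattice}, and $N$ is already a direct summand of the lattice $\widehat{\pi_\varphi}_* H^2(\tilde Z_\tau, \mathbb Z)\oplus N$ from which $H^2(\tilde Y,\mathbb Z)$ is built. For $\mu_2$, I would express it as a $\mathbb Z$-combination of the generators $k_1, \ldots, k_6$ in (\ref{generatoriY}), the class $\mu_1 \in N$, and suitable integral classes in $\widehat{\pi_\varphi}_* H^2(\tilde Z_\tau, \mathbb Z)$ (which carry the $\overline n_j$-contributions); the pattern of $m_i$ indices appearing in the $k_j$'s should be combined with $\mu_1$ to isolate the combination $(m_1 + m_2 + m_7 + m_8)/2$ up to integral $\widehat{\pi_\varphi}_* H^2(\tilde Z_\tau, \mathbb Z)$-terms, and then balanced against $(\overline n_1 + \overline n_2 + \overline n_3 + \overline n_4)/2$.

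Finally I would match the overlattice with $M_{2,2}$. Setting $(v_1,\ldots,v_4) = (m_3, m_4, m_5, m_6)$, $(v_5,\ldots,v_8) = (m_1, m_2, m_7, m_8)$, and $(v_9,\ldots,v_{12}) = (\overline n_1, \ldots, \overline n_4)$, Nikulin's generators $(v_1+\cdots+v_8)/2$ and $(v_5+\cdots+v_{12})/2$ become precisely $\mu_1$ and $\mu_2$. Since the overlattice of $A_1^{\oplus 12}$ generated by these two half-sums has exactly the rank, discriminant group, and discriminant form of $M_{2,2}$, and since (by Remark \ref{unimodularity}) the primitive completion of $A_1^{\oplus 12}$ inside $H^2(\tilde Y,\mathbb Z)$ is unique, this identifies the concrete lattice $\langle \overline n_1,\ldots,\overline n_4,m_1,\ldots,m_8,\mu_1,\mu_2\rangle$ with Nikulin's $M_{2,2}$.

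The main obstacle is the integrality check for $\mu_2$: while $\mu_1$ is built in by the definition of $N$, the element $\mu_2$ mixes a $\widehat{\pi_\varphi}_* H^2(\tilde Z_\tau, \mathbb Z)$-part with a partial half-sum of $m_j$ that is \emph{not} individually integral in $N$, so its integrality must be witnessed by a specific $\mathbb Z$-combination of the glueing classes $k_1,\ldots,k_6$. This is essentially a combinatorial search through the $m$-supports of the $k_i$'s (paralleling the computation displayed in Remark \ref{gammamezzi}), and once performed it immediately yields the desired expression of $\mu_2$ as an integral class.
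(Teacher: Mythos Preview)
Your proposal is correct and follows essentially the same route as the paper. Both arguments identify the twelve exceptional $(-2)$-classes as a $\mathbb Q$-basis, note that $\mu_1\in N$ is automatic, and then reduce everything to checking integrality of $\mu_2$ against the explicit gluing classes $k_1,\dots,k_6$ of \eqref{generatoriY}; the only cosmetic difference is that the paper phrases the last step as a search (``$\mu_2$ is the only combination of the form $(\overline n_1+\overline n_2+\overline n_3+\overline n_4+m_i+m_j+m_h+m_k)/2$ integral in $H^2(\tilde Y,\mathbb Z)$ and independent from $\mu_1$''), whereas you propose to exhibit $\mu_2$ directly as a $\mathbb Z$-combination and then match discriminants with Nikulin's abstract $M_{2,2}$.
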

\begin{proof}
A $\mathbb Q$-basis of $M_{2,2}$ is $\{\overline n_1,\dots,\overline n_4,m_1,\dots, m_8\}$, as these are the classes that come from resolution of the singularities in our construction. Notice moreover that it holds $\mu_1\in N$, while $\mu_2$ is the only other linear combination of the form $(\overline n_1+\overline n_2+\overline n_3+\overline n_4+m_i+m_j+m_h+m_k)/2$ which is integral in $H^2(\tilde Y,\mathbb Z)$ and independent from $\mu_1$.
\end{proof}

\subsection{The surface $\tilde Y$ as quotient of $\tilde Z_\varphi$}\label{sec:hatpsi}

\begin{proposition}\label{pihattau}
Consider the sublattice $\pi_{\varphi*}W\oplus A_1^{\oplus 8}$ of finite index of $H^2(\tilde Z_\varphi, \mathbb Z)$: the map $ \widehat{\pi_{\tau}}_*$ acts in the following way on it:\\
\[\adjustbox{scale=0.7,center}{\begin{tikzcd}[column sep=0.001pt] 
\begin{matrix}A_2\\ \tilde a_1,\tilde a_2\end{matrix} \arrow[dr]  &\begin{matrix}\oplus \\ \ \end{matrix}
&\begin{matrix}A_2\\ \tilde b_1,\tilde b_2\end{matrix}\arrow[dl]  &\begin{matrix}\oplus \\ \ \end{matrix} 
&\begin{matrix}A_2\\ \tilde e_1,\tilde e_2\end{matrix}\arrow[d]  &\begin{matrix}\oplus \\ \ \end{matrix}
&\begin{matrix}A_2\\ \tilde g_1,\tilde g_2\end{matrix}\arrow[d]  &\begin{matrix}\oplus \\ \ \end{matrix}
&\begin{matrix}A_2(2)\\ \tilde w,\tilde z\end{matrix}\arrow[d] &\begin{matrix}\oplus \\ \ \end{matrix} 
&\begin{matrix}U(6)\\ \hat x,\hat y\end{matrix} \arrow[d]
&\begin{matrix}\oplus \\ \ \end{matrix} 
&\begin{matrix}\begin{bmatrix}8 &4\\4 &8\end{bmatrix}\\ \hat v_1,\hat v_2\end{matrix} \arrow[d] &\begin{matrix}\oplus \\ \ \end{matrix}
&\begin{matrix}A_1^{\oplus 2} \\ n_1,n_5 \end{matrix} \arrow[d] &\begin{matrix}\oplus \\ \ \end{matrix}
&\begin{matrix}A_1^{\oplus 2} \\ n_2,n_4 \end{matrix} \arrow[d]  &\begin{matrix}\oplus \\ \ \end{matrix}
&\begin{matrix}A_1^{\oplus 2} \\ n_3,n_8\end{matrix} \arrow[d] &\begin{matrix}\oplus \\ \ \end{matrix}
&\begin{matrix}A_1^{\oplus 2} \\ n_6,n_7 \end{matrix} \arrow[d]\\
\ &\begin{matrix}A_2\\ \overline a_1,\overline a_2\end{matrix}\arrow[rrr,phantom, "\begin{matrix}\oplus \\ \ \end{matrix}"]  &\ &\ 
&\begin{matrix}A_2(2)\\ \overline e_1,\overline e_2\end{matrix}\arrow[rr,phantom, "\begin{matrix}\oplus \\ \ \end{matrix}"]  &\ 
&\begin{matrix}A_2(2)\\ \overline g_1,\overline g_2\end{matrix}\arrow[rr,phantom, "\begin{matrix}\oplus \\ \ \end{matrix}"]  &\ &\begin{matrix} 0\\ \ \end{matrix} \arrow[rr,phantom, "\begin{matrix}\oplus \\ \ \end{matrix}"] &\ 
&\begin{matrix}U(12)\\ \overline x,\overline y\end{matrix}\arrow[rr,phantom, "\begin{matrix}\oplus \\ \ \end{matrix}"]  &\
&\begin{matrix}\begin{bmatrix}16 &8\\8 &16\end{bmatrix}\\ \overline v_1,\overline v_2\end{matrix} &\begin{matrix}\oplus \\ \ \end{matrix}
&\begin{matrix}A_1\\ \tilde n_1\end{matrix}&\begin{matrix}\oplus \\ \ \end{matrix}
&\begin{matrix}A_1\\ \tilde n_2\end{matrix}&\begin{matrix}\oplus \\ \ \end{matrix}
&\begin{matrix}A_1\\ \tilde n_3\end{matrix}&\begin{matrix}\oplus \\ \ \end{matrix}
&\begin{matrix}A_1\\ \tilde n_6\end{matrix}
\end{tikzcd}}\]
The lattice $\widehat{\pi_{\tau}}_*H^2(\tilde Z_\varphi, \mathbb Z)$ can be obtained by $\mathbb Q$-linear extension applied to the elements $\tilde\alpha,\tilde\gamma,\tilde\varepsilon,\tilde\zeta,\tilde\eta$ which are the image via $\pi_{\varphi*}$ of the elements \eqref{generatori}, and $\nu, t_1,\dots, t_6$ defined in \eqref{generatoriZphi}. We denote $\tilde\star=\widehat{\pi_{\tau*}}\star$; if $\star=\pi_{\varphi*}\bullet$ for $\bullet\in H^2(X, \mathbb Z)$, then $\overline{\bullet}=\tilde\star$.
\end{proposition}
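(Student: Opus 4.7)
The proof follows the same template as Proposition \ref{pirho}. The strategy has two parts: (i) compute the image of $\pi_{\varphi*}W$ under $\widehat{\pi_\tau}_*$ using the push-pull formula, and (ii) determine how $\widehat{\pi_\tau}$ permutes the eight exceptional $(-2)$-classes $n_1, \ldots, n_8$, using the requirement that the resulting image lattice be integral and even.

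For (i), I would first read off the action of $\hat\tau^*$ on $\pi_{\varphi*}W$ by descending the action of $\tau^*$ on $W$ (Proposition \ref{sublattice}) through the quotient by $\varphi^*$ (Proposition \ref{piphi}). A direct inspection identifies which pairs of $A_2$-summands are swapped by $\hat\tau^*$, which blocks are fixed, and which are negated. The push-pull formula $\widehat{\pi_\tau}_*(x_1)\cdot\widehat{\pi_\tau}_*(x_2)=\tfrac{1}{2}(x_1+\hat\tau^* x_1)\cdot(x_2+\hat\tau^* x_2)$ then yields: swapped pairs of $A_2$ fuse to a single $A_2$; fixed blocks rescale by $2$ (so $A_2\mapsto A_2(2)$, $U(6)\mapsto U(12)$, and the $\bigl[\begin{smallmatrix}8&4\\4&8\end{smallmatrix}\bigr]$ block doubles its entries); and the $A_2(2)$ summand spanned by $\tilde z, \tilde w$, on which $\hat\tau^*$ acts as $-\mathrm{id}$, maps to zero. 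This reproduces exactly the bottom row of the diagram in the statement.

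For (ii), the involution $\hat\tau$ must permute $n_1,\ldots,n_8$ in four disjoint transpositions (these classes correspond to the eight fixed points of $\varphi$ on $X$, which are swapped pairwise by $\tau$ as recorded in \S\ref{sec:quotients_Klein}). A priori this leaves several combinatorial possibilities, but the correct one is forced by integrality: applying the push-pull formula to the extra generators $t_1, \ldots, t_6$ from \eqref{generatoriZphi} and to $\nu$, one must obtain pairings that are integer and yield an even lattice when combined with the relations in $H^2(\tilde Z_\varphi,\mathbb Z)$. A finite case analysis shows that only the pairing $\{n_1,n_5\},\{n_2,n_4\},\{n_3,n_8\},\{n_6,n_7\}$ listed in the statement passes this test. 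Once this is fixed, the description of the full lattice $\widehat{\pi_\tau}_*H^2(\tilde Z_\varphi,\mathbb Z)$ follows by $\mathbb Q$-linear extension to the elements $\tilde\alpha, \tilde\gamma, \tilde\varepsilon, \tilde\zeta, \tilde\eta$ and $t_1,\dots,t_6$, exactly as in Proposition \ref{pirho}.

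The main obstacle I expect is the combinatorial bookkeeping in step (ii): the computation of each individual intersection number via push-pull is routine, but one has to carry it out for each candidate pairing of $\{n_1,\ldots,n_8\}$ and rule out every alternative in order to establish uniqueness. This mirrors the step identified as the only non-trivial one in the proof of Proposition \ref{pirho}.
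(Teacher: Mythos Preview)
Your proposal is correct and matches the paper's approach exactly. In fact, the paper does not even write out a separate proof for Proposition~\ref{pihattau}: it is understood to follow verbatim the argument given for Proposition~\ref{pirho}, namely the push-pull computation on $\pi_{\varphi*}W$ together with the integrality/evenness constraint on the images of $t_1,\dots,t_6$ to pin down the pairing of the $n_i$, which is precisely what you outline.
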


Again, calling $m_1,\dots m_8$ the $(-2)$-classes that generate over $\mathbb Q$ the exceptional lattice $N$ of the resolution of the singularities $\tilde Y\rightarrow \tilde Z_\varphi/\hat\tau$, we construct $H^2(\tilde Y,\mathbb Z)$ as overlattice of $\widehat{\pi_{\tau}}_*H^2(\tilde Z_\varphi, \mathbb Z)\oplus N$ using the following elements:
\begin{align*}\label{generatoriYphi}
h_1&=(\overline e_2+\overline a_1-\tilde t_4-\overline\varepsilon-\overline\zeta-\tilde n_1-\tilde t_3)/2+(m_5+m_3+m_2+m_8)/2;\\
h_2&=(\overline a_2-\tilde t_4-\overline\zeta-\tilde n_1-\tilde t_3)/2+(m_4+m_3+m_2+m_8)/2; \\
h_3&=(\overline g_2+\overline e_2+\overline e_1+\overline a_2-\tilde t_4-\overline\zeta-\tilde n_1-\tilde t_3)/2+(m_7+m_5+m_4+m_3)/2;\\
h_4&=(\overline g_2+\overline e_2+\overline a_2-\tilde t_4-\overline\zeta-\tilde n_1-\tilde t_3)/2+(m_7+m_5+m_4+m_8)/2;\\
h_5&=(\overline g_1+\overline e_2+\overline a_1-\overline\varepsilon+\overline a_2)/2+(m_6+m_5+m_4+m_2)/2;\\
h_6&=(\overline g_1+\overline g_2+\overline e_1)/2+(m_7+m_6+m_3+m_8)/2.
\end{align*}

\begin{proposition}\label{prop:M22phi}
The lattice $M_{2,2}$ is generated over $\mathbb Q$ by the elements $\tilde n_1,\tilde n_2,\tilde n_3,\tilde n_6$ $m_1,\dots, m_8$. To get a set of $\mathbb Z$-generators, add the elements
\[\mu_1=\frac{m_1+\dots+ m_8}{2},\quad\mu'_2=\frac{\tilde n_1+\tilde n_2+\tilde n_3+\tilde n_6+m_3+m_4+m_5+m_8}{2}.\]
\end{proposition}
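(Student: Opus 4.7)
The plan is to mirror the proof of Proposition \ref{prop:M22}, with the role of the intermediate surface $\tilde Z_\tau$ replaced by $\tilde Z_\varphi$. First, I would verify that $\{\tilde n_1,\tilde n_2,\tilde n_3,\tilde n_6, m_1,\ldots, m_8\}$ is a $\mathbb Q$-basis of $M_{2,2}$: the first four classes arise as $\widehat{\pi_\tau}_*$-images of the exceptional classes on $\tilde Z_\varphi$ (cf.\ Proposition \ref{pihattau}), the latter eight from the resolution $\tilde Y\to\tilde Z_\varphi/\hat\tau$, and together they exhaust the rational $(-2)$-curves introduced by the two successive resolutions of the 24 fixed points of $(\mathbb Z/2\mathbb Z)^2$.

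By \cite[\S 6, case 2a]{Nikulin2}, $M_{2,2}$ is an overlattice of $A_1^{\oplus 12}$ obtained by adjoining exactly two generators, each a half-sum of eight orthogonal $(-2)$-classes. The element $\mu_1=(m_1+\ldots+m_8)/2$ is forced to lie in the copy of the Nikulin lattice introduced by the last resolution (see Definition \ref{NikulinLattice}), so one generator is already accounted for. The second generator cannot come from $m_1,\dots,m_8$ alone (as $N$ admits no further half-integral glue) nor from $\tilde n_1,\tilde n_2,\tilde n_3,\tilde n_6$ alone, so it must combine all four of the $\tilde n$'s with exactly four of the $m_i$'s.

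To pinpoint the correct subset $\{m_i,m_j,m_k,m_l\}\subset\{m_1,\ldots,m_8\}$, I would test for which four-tuple the class $(\tilde n_1+\tilde n_2+\tilde n_3+\tilde n_6+m_i+m_j+m_k+m_l)/2$ is integral in $H^2(\tilde Y,\mathbb Z)$ constructed via the generators $h_1,\ldots,h_6$ above. By Remark \ref{unimodularity}, the unimodular overlattice of $\widehat{\pi_\tau}_*H^2(\tilde Z_\varphi,\mathbb Z)\oplus N$ of signature $(3,19)$ is unique up to isometry, which guarantees existence and, modulo the addition of $\mu_1$, uniqueness of such a four-tuple; the claim is that it is $\{m_3,m_4,m_5,m_8\}$, yielding $\mu'_2$.

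The main obstacle — really the only substantive calculation — is the explicit verification that the proposed $\mu'_2$ is integral in $H^2(\tilde Y,\mathbb Z)$, which amounts to writing it as a $\mathbb Z$-linear combination of $h_1,\ldots,h_6$ together with the generators of $\widehat{\pi_\tau}_*H^2(\tilde Z_\varphi,\mathbb Z)\oplus N$ and $\nu=(m_1+\dots+m_8)/2$. Once this is checked, Nikulin's classification of $M_{2,2}$ forces $\{\tilde n_1,\tilde n_2,\tilde n_3,\tilde n_6, m_1,\ldots,m_8, \mu_1,\mu'_2\}$ to generate $M_{2,2}$ over $\mathbb Z$, completing the argument.
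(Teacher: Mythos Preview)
Your proposal is correct and follows exactly the approach the paper takes: the paper gives no separate proof for Proposition~\ref{prop:M22phi}, leaving it as the evident analogue of the proof of Proposition~\ref{prop:M22}, and your write-up reproduces that argument with the roles of $\tilde Z_\tau$ and $\tilde Z_\varphi$ interchanged. The key points---that the twelve $(-2)$-classes form a $\mathbb Q$-basis because they are the exceptional curves of the two resolutions, that $\mu_1\in N$, and that $\mu'_2$ is singled out (modulo $\mu_1$) as the unique integral half-sum of the form $(\tilde n_1+\tilde n_2+\tilde n_3+\tilde n_6+m_i+m_j+m_k+m_l)/2$ in the overlattice built from $h_1,\dots,h_6$---match the paper's reasoning verbatim.
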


\begin{remark}
We won't give the explicit change of basis of $H^2(\tilde Y, \mathbb Z)$ between $\tilde Y$ obtained as quotient of $\tilde Z_\tau$ or $\tilde Z_\varphi$. Notice however that the lattice $M_{2,2}$ is preserved by the change of basis: indeed, it is generated over $\mathbb Q$ by the exceptional curves introduced in the resolution of $X/(\mathbb Z/2\mathbb Z)^2$, which do not depend on the intermediate quotient.
\end{remark}

\subsection{A lattice-theoretic characterization of the intermediate quotient surface}

Let $X$ be a K3 surface with a symplectic action of $G=(\mathbb Z/2\mathbb Z)^2$, $\iota\in G$ an involution: we give a lattice-theoretic characterization of $\tilde Z$, the resolution of singularities of $X/\iota$.

\begin{theorem}\label{Ztaunonproj}
Let $\tilde Z$ be a K3 surface such that $rk(NS(\tilde Z))=12$. There exists a K3 surface $X$ with a symplectic  action of $(\mathbb Z/2\mathbb Z)^2$ such that $\tilde Z$ is birationally equivalent to the quotient $X/\iota$, where $\iota$ is one of the generators of $(\mathbb Z/2\mathbb Z)^2$, if and only if $NS(\tilde Z)=\Gamma_{2,2}$ (see Def. \ref{def:Gamma22}).
\end{theorem}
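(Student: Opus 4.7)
The plan is to prove the biconditional in Theorem~\ref{Ztaunonproj} by establishing the two implications separately.

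For the forward direction, I would assume $\tilde Z$ is birational to $X/\iota$ for some K3 surface $X$ with a symplectic $(\mathbb Z/2\mathbb Z)^2$-action. The eight exceptional $(-2)$-curves resolving the eight double points of $X/\iota$ span a Nikulin sublattice $N\subset NS(\tilde Z)$, and the classes $\pi_{\iota*}\Omega_{2,2}\simeq D_4(2)$ (Remark~\ref{R22}) are algebraic and orthogonal to $N$. By Definition~\ref{def:Gamma22}, the primitive completion of $N\oplus\pi_{\iota*}\Omega_{2,2}$ in $H^2(\tilde Z,\mathbb Z)$ is $\Gamma_{2,2}$, so since each of its generators is algebraic we get $\Gamma_{2,2}\subseteq NS(\tilde Z)$. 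Being primitive of rank $12$, the hypothesis $rk\,NS(\tilde Z)=12$ forces equality.

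For the converse, suppose $NS(\tilde Z)=\Gamma_{2,2}$. The first goal is to recover $X$ and the involution $\tau$: since $N$ embeds primitively in $NS(\tilde Z)$ by construction, the classical inverse Nikulin/van~Geemen--Sarti correspondence (\cite{VGS},\cite{GS}) yields a K3 surface $X$ with a symplectic involution $\tau$ such that $\tilde Z\simeq\widetilde{X/\tau}$. Concretely, one reconstructs the Hodge structure on $\Lambda_{K3}$ whose algebraic part matches the primitive embedding $N^\perp\subset\Gamma_{2,2}$ (dualised via $\pi_{\tau*}$) and whose transcendental part is determined by $T_{\tilde Z}$, then invokes the surjectivity of the period map.

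The second step is to upgrade this single involution to a full $(\mathbb Z/2\mathbb Z)^2$-action. By Remark~\ref{Omega2inGamma22}, $\Gamma_{2,2}=NS(\tilde Z)$ contains a primitive copy of $\Omega_2\simeq E_8(2)$, so Nikulin's criterion provides a symplectic involution $\hat\varphi$ on $\tilde Z$ acting as $-1$ on $\Omega_2$ and trivially on its orthogonal complement. I would then lift $\hat\varphi$ through the (rational) double cover $\pi_\tau:X\dashrightarrow\tilde Z$ to an involution $\varphi$ of $X$. Lattice-theoretically this amounts to showing that $\hat\varphi^*$ preserves the decomposition of $NS(\tilde Z)$ that distinguishes the image $\pi_{\tau*}NS(X)$, and commutes with $\tau^*$; both follow from the explicit gluing between $N$ and the $\Omega_2$-part of $\Gamma_{2,2}$ recorded in Definition~\ref{def:Gammatau} and Proposition~\ref{sublattice}. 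The strong Torelli theorem then yields the geometric involution $\varphi$ on $X$ from the Hodge isometry, and $\langle\tau,\varphi\rangle\simeq(\mathbb Z/2\mathbb Z)^2$ acts symplectically with $\widetilde{X/\tau}\simeq\tilde Z$.

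The main obstacle is precisely this second step: guaranteeing that $\hat\varphi$ lifts to $X$ and \emph{commutes} with $\tau$ rather than producing an unrelated symplectic involution. The unique gluing forced by Remark~\ref{unimodularity}, together with the fact that $\Omega_2\subset\Gamma_{2,2}$ is orthogonal to the subspace $\langle n_1+n_8,\dots,\hat a_2+\hat c_2\rangle$ described in Remark~\ref{Omega2inGamma22}, should be exactly what is needed to make the pull-back of $\hat\varphi^*$ to $H^2(X,\mathbb Z)$ an involution commuting with $\tau^*$; executing this check via the prescribed generators of $\Gamma_{2,2}$ is where the real work lies.
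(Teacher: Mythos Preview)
Your forward direction matches the paper's. For the converse the paper takes the opposite route: rather than going \emph{up} from $\tilde Z$ to $X$ via the embedding $N\subset\Gamma_{2,2}$ and then lifting $\hat\varphi$, it goes \emph{down}. The embedding $\Omega_2\subset\Gamma_{2,2}$ of Remark~\ref{Omega2inGamma22} gives a symplectic involution on $\tilde Z$; by Proposition~\ref{prop:M22} the resolution $\tilde Y$ of the quotient has $NS(\tilde Y)=M_{2,2}$, and Nikulin's characterisation of $(\mathbb Z/2\mathbb Z)^2$-quotients in \cite{Nikulin2} then produces a K3 surface $X$ already carrying the full $(\mathbb Z/2\mathbb Z)^2$-action, with $NS(X)=\Omega_{2,2}$. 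One then picks an involution $\iota$ and checks that $NS(\widetilde{X/\iota})\simeq\Gamma_{2,2}$. This sidesteps the lifting problem entirely---the group comes for free from the $M_{2,2}$ black box---at the price of only matching N\'eron--Severi lattices at the end rather than identifying $\tilde Z$ with $\widetilde{X/\iota}$ on the nose.

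Your route can also be made to work, but the obstacle is not the one you flag. Commutation of a lift $\varphi$ with the covering involution $\tau$ is automatic: both $\varphi\tau$ and $\tau\varphi$ are lifts of $\hat\varphi$, and the set of lifts is exactly $\{\varphi,\tau\varphi\}$, which forces $\varphi\tau=\tau\varphi$. What is \emph{not} automatic is that the lift has order~$2$: a priori $\varphi^2$ lifts the identity, so $\varphi^2\in\{\mathrm{id},\tau\}$, and $\varphi^2=\tau$ would yield a symplectic $\mathbb Z/4\mathbb Z$ on $X$ rather than $(\mathbb Z/2\mathbb Z)^2$. This possibility is excluded by a rank count---$\mathrm{rk}\,\Omega_{\mathbb Z/4\mathbb Z}=14$ while $\mathrm{rk}\,NS(X)=\mathrm{rk}\,NS(\tilde Z)=12$---but you should say so explicitly. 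Once you do, your argument closes, and in fact delivers $\tilde Z\cong\widetilde{X/\tau}$ directly by construction.
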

\begin{proof}
The ``only if'' is true by construction (see Sections \ref{sec:pitau}, \ref{sec:piphi}). Conversely,
suppose $NS(\tilde Z)=\Gamma_{2,2}$: the embedding $\Omega_2\subset\Gamma_{2,2}$ described in Remark \ref{Omega2inGamma22} defines a symplectic involution $\hat\rho$ on $\tilde Z$, and the Néron-Severi lattice of the resolution $\tilde Y$ of $Y=\tilde Z/\hat\rho$ is a copy of $M_{2,2}$, as proved in Proposition \ref{prop:M22}; therefore, by Nikulin's results in \cite{Nikulin2} the surface $\tilde Y$ is the resolution of the quotient of a K3 surface $X$ by the symplectic action of $(\mathbb Z/2\mathbb Z)^2$, and it holds $NS(X)=\Omega_{2,2}$. The action of $(\mathbb Z/2\mathbb Z)^2$ on $\Omega_{2,2}$ defines three copies of $\Omega_2\subset\Omega_{2,2}$, as described in Section \ref{sec:azioni}; choose one of them, and define $\iota$ as the involution for which it is the co-invariant lattice (this is always possible by the Torelli theorem). Taking the quotient map $\pi_{\iota}: X\rightarrow X/\iota$ and the resolution $\widetilde{X/\iota}$, it then holds $NS(\widetilde{X/\iota})\simeq NS(\tilde Z)$.
\end{proof}

\subsection{The dual maps}
In this section, we give a description of the maps $\pi_{\tau}^*: H^2(\tilde Z_\tau, \mathbb Z)\rightarrow H^2(X, \mathbb{Z}), \pi_{\varphi}^*: H^2(\tilde Z_\varphi, \mathbb Z)\rightarrow H^2(X, \mathbb{Z}), \pi_{2,2}^*: H^2(\tilde Y, \mathbb Z)\rightarrow H^2(X, \mathbb{Z})$; we're going to use the descriptions of $H^2(\tilde Z_\tau, \mathbb Z), H^2(\tilde Z_\varphi, \mathbb Z),H^2(\tilde Y, \mathbb Z)$ provided in Sections \ref{sec:pitau}, \ref{sec:piphi}, \ref{sec:hatrho} respectively.
The proof of the following proposition is similar to that of \cite[Prop. 4.4.1]{P}.

\begin{proposition}
\begin{enumerate}
\item The map $\pi_\tau^*: H^2(\tilde Z_\tau,\mathbb Z)\rightarrow H^2(X,\mathbb Z)$ annihilates $N$, and acts on  $\pi_{\tau*}W\subset \pi_{\tau*}\Lambda_{\mathrm{K3}}$ as follows\\[10pt]
\adjustbox{scale=0.9,center}{\xymatrix@R=0.01pc @C=0.2pc{
{\pi_\tau}^*: &A_2^{\oplus 3}  \ar@{}[r]|(.4){\oplus} &A_2(2)^{\oplus 2}  \ar@{}[r]|(.6){\oplus} &U(6) \ar@{}[r]|(.45){\oplus} &{\begin{bmatrix}
8 & 4 \\
4 & 8 \end{bmatrix}} &\ \ar[rr] &\ &{\quad A_2^{\oplus 6}} \ar@{}[r]|{\oplus} &A_2^{\oplus 2}  \ar@{}[r]|{\oplus} &U(3) \ar@{}[r]|(.45){\oplus} &{\begin{bmatrix}
4 & 2 \\
2 &4\end{bmatrix}}\\
\ &{\left(\begin{matrix}\hat a_1, \hat a_2\\ \hat c_1, \hat c_2\\ \hat g_1, \hat g_2 \end{matrix}\right.} &{\begin{matrix}\hat e_1, \hat e_2\\ \hat f_1, \hat f_2\end{matrix}} &{\hat x, \hat y} &{\hat v_1,\ \hat v_2\left.\begin{matrix}\ \\ \ \\ \ \\ \end{matrix}\right)} &\ \ar@{|->}[rr] &\ &{\left(\begin{matrix}a_1+b_1, a_2+b_2\\ c_1+d_1, c_2+d_2\\ g_1+h_1, g_2+h_2\end{matrix}\right.}  &{\begin{matrix}2e_1, 2e_2\\ 2f_1, 2f_2\end{matrix}}  &{2x, 2y} &{2v_1,2v_2\left.\begin{matrix}\ \\ \ \\ \ \\ \end{matrix}\right)}
}}

Its action can be extended to $\pi_{\tau*}\Lambda_{\mathrm{K3}}$ adding the following elements (and their respective image to the image lattice):
$\hat\alpha=(-\hat a_1+\hat a_2+\hat c_1-\hat c_2-\hat e_1+\hat e_2+\hat f_1-\hat f_2)/3,\ 
\hat \gamma=(\hat x-\hat y-\hat e_1+\hat e_2-\hat f_1+\hat f_2)/3,\ 
\hat \delta=(\hat x-2\hat c_1+2\hat c_2-\hat e_1+\hat e_2)/3,\ 
\hat \varepsilon=(\hat x+\hat c_1-\hat c_2-\hat e_1+\hat e_2)/3,\ 
\hat \zeta=(\hat x+\hat c_1+\hat c_2+\hat e_1+\hat e_2+\hat \varepsilon)/2+\hat v_2/2+\hat g_1+\hat g_2,\ 
\hat \eta=(\hat x+\hat c_1+\hat c_2+\hat e_1+\hat e_2+\hat \varepsilon)/2+(\hat g_1-\hat g_2-\hat v_2)/3+\hat v_1/6$;

to extend the action to $H^2(\tilde Z_\tau, \mathbb Z)$, add also $s_1,\dots , s_6$ as in \eqref{generatoriZtau}.

\item The map $\pi_\varphi^*: H^2(\tilde Z_\varphi,\mathbb Z)\rightarrow H^2(X,\mathbb Z)$ annihilates $N$, and acts on  $\pi_{\varphi*}W\subset \pi_{\varphi*}\Lambda_{\mathrm{K3}}$ as follows\\
\adjustbox{scale=0.9,center}{\xymatrix@R=0.01pc @C=0.2pc{
{\pi_\varphi}^*: &A_2^{\oplus 4}  \ar@{}[r]|(.4){\oplus} &A_2(4)  \ar@{}[r]|(.6){\oplus} &U(6) \ar@{}[r]|(.45){\oplus} &{\begin{bmatrix}
8 & 4 \\
4 & 8 \end{bmatrix}} &\ \ar[rr] &\ &{\quad A_2^{\oplus 6}} \ar@{}[r]|{\oplus} &A_2^{\oplus 2}  \ar@{}[r]|{\oplus} &U(3) \ar@{}[r]|(.45){\oplus} &{\begin{bmatrix}
4 & 2 \\
2 &4\end{bmatrix}}\\
\ &{\left(\begin{matrix}\tilde a_1, \tilde a_2\\ \tilde b_1, \tilde b_2\\ \tilde e_1, \tilde e_2\\ \tilde g_1, \tilde g_2 \end{matrix}\right.} & {\tilde z, \tilde w} &{\tilde x, \tilde y} &{\tilde v_1,\ \tilde v_2\left.\begin{matrix}\ \\ \ \\ \ \\ \ \\ \end{matrix}\right)} &\ \ar@{|->}[rr] &\ &{\left(\begin{matrix}a_1+d_1, a_2+d_2\\ b_1+c_1, b_2+c_2\\ e_1+f_1, e_2+f_2\\ g_1+h_1, g_2+h_2\end{matrix}\right.}  &{2z, 2w}  &{2x, 2y} &{2v_1,2v_2\left.\begin{matrix}\ \\ \ \\ \ \\ \ \\ \end{matrix}\right)}
}}

Its action can be extended to $\pi_{\varphi*}\Lambda_{\mathrm{K3}}$ adding the following elements (and their respective image to the image lattice):
$\tilde\gamma=(\tilde x-\tilde y-2\tilde e_1+2\tilde e_2)/3,\ 
\tilde\delta=(\tilde x-\tilde b_1+\tilde b_2-\tilde a_1+\tilde a_2-\tilde e_1+\tilde e_2)/3,\ 
\tilde\varepsilon=(\tilde x-\tilde z+\tilde w+\tilde b_1-\tilde b_2-\tilde e_1+\tilde e_2)/3,\ 
\tilde\zeta=(\tilde x+\tilde z+\tilde b_1+\tilde b_2+\tilde e_1+\tilde e_2+\tilde \varepsilon)/2+\tilde v_2/2+\tilde g_1+\tilde g_2,\ 
\tilde\eta=(\tilde x+\tilde b_1+\tilde b_2+\tilde e_1+\tilde e_2+\tilde \varepsilon)/2+(\tilde g_1-\tilde g_2-\tilde v_2)/3+\tilde v_1/6;$\\
to extend the action to $H^2(\tilde Z_\varphi, \mathbb Z)$, add also $t_1,\dots , t_6$ as in \eqref{generatoriZphi}.

\item The map $\pi_{2,2}^*H^2(\tilde Y,\mathbb Z)\rightarrow H^2(X,\mathbb Z)$ annihilates $M_{2,2}$, and acts on  $\pi_{2,2*}W\subset \pi_{2,2*}\Lambda_{\mathrm{K3}}$ as follows\\
\adjustbox{scale=0.9,center}{\xymatrix@R=0.02pc @C=0.2pc{
{\pi_{2,2}}^*: &A_2  \ar@{}[r]|(.4){\oplus} &A_2(2)^{\oplus 2} \ar@{}[r]|(.55){\oplus} &U(12) \ar@{}[r]|(.45){\oplus} &{\begin{bmatrix}
16 & 8 \\
8 & 16 \end{bmatrix}} &\ \ar[rr] &\ &{\quad A_2^{\oplus 8}} \ar@{}[r]|{\oplus}  &U(3) \ar@{}[r]|(.45){\oplus} &{\begin{bmatrix}
4 & 2 \\
2 &4\end{bmatrix}}\\
\ &{\Big(\overline a_1, \overline a_2} & {\begin{matrix}\overline e_1,\overline e_2\\ \overline g_1,\overline g_2\end{matrix}} &{\overline x, \overline y} &{\overline v_1,\ \overline v_2\Big)} &\ \ar@{|->}[rr] &\ &{\left(\begin{matrix}a_1+b_1+c_1+d_1\\ a_2+b_2+c_2+d_2\\ 2e_1+2f_1, 2e_2+2f_2\\ 2g_1+2h_1,2g_2+2h_2\end{matrix}\right.}   &{4x, 4y} &{4v_1,4v_2\left.\begin{matrix}\ \\ \ \\ \ \\ \end{matrix}\right)}
}}

Its action can be extended to $\pi_{\varphi*}\Lambda_{\mathrm{K3}}$ adding the following elements (and their respective image to the image lattice):
$\overline\gamma=(\overline x-\overline y-2\overline e_1+2\overline e_2)/3,\ 
\overline\varepsilon=(\overline x+\overline a_1-\overline a_2-\overline e_1+\overline e_2)/3,\ 
\overline\zeta=(\overline x+\overline a_1+\overline a_2+\overline e_1+\overline e_2+\overline v_2+\overline\varepsilon)/2+\overline g_1+\overline g_2,\ 
\overline\eta=(x+\overline a_1+\overline a_2+\overline e_1+\overline e_2+\overline\varepsilon)/2+(\overline g_1-\overline g_2-\overline v_2)/3+\overline v_1/6$;\\
to extend the action to $H^2(\tilde Y, \mathbb Z)$, add also $\overline\gamma/2$ and $k_1,\dots , k_6$ (see \eqref{generatoriY}).
\end{enumerate}
\end{proposition}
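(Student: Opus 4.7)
The proof follows the same template as \cite[Prop. 4.4.1]{P}, exploiting two facts that together determine each dual map. First, for a degree-$2$ map $\pi: X \to Z$ induced by a symplectic involution $\iota$, the push-pull formula reads $\pi^*\pi_* = 1 + \iota^*$ on $H^2(X, \mathbb Z)$. Second, the exceptional curves $n_1, \dots, n_8$ introduced in the resolution $\tilde Z \to Z$ have image contracted to a point under the quotient, so that $\pi^*(n_i) = 0$; consequently $\pi^*$ annihilates the Nikulin lattice $N$ spanned by these classes. The analogous statement for $\pi_{2,2}^*$ is that all 12 exceptional curves arising from the resolution of the singular locus of $X/(\mathbb Z/2\mathbb Z)^2$ are annihilated, which amounts to killing the whole exceptional lattice $M_{2,2}$.

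The plan for part (1) is to use these two principles to compute $\pi_\tau^*$ on the sublattice $\pi_{\tau*}W \oplus N$ of finite index in $H^2(\tilde Z_\tau, \mathbb Z)$, and then extend by $\mathbb Q$-linearity to the overlattice. On each summand in the image of Proposition \ref{pitau}: if $\tau^*$ swaps two copies of $A_2$, say with generators $a_i, b_i$, then $\pi_\tau^*(\hat a_i) = a_i + \tau^*(a_i) = a_i + b_i$, giving the first columns of the table; if $\tau^*$ acts as $\mathrm{id}$ on a summand (the $e_i, f_i$ copies of $A_2$, the $U(3)$ block, and the transcendental block $\begin{bmatrix}4&2\\2&4\end{bmatrix}$), then $\pi_\tau^*$ multiplies generators by $2$. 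The $A_2(2)$ summand on which $\tau^*$ acts as $-\mathrm{id}$ is killed by $\pi_{\tau*}$ itself and does not appear in the image, consistent with the absence of a corresponding $\hat z, \hat w$ block. The images of the overlattice generators $\hat\alpha, \hat\gamma, \hat\delta, \hat\varepsilon, \hat\zeta, \hat\eta$ are obtained by rewriting these classes as $\mathbb Q$-linear combinations in $\pi_{\tau*}W$ (using their definitions as $\pi_{\tau*}$ applied to the generators in \eqref{generatori}) and invoking linearity; similarly for $s_1, \dots, s_6$, whose contribution comes from the $N$-component (killed) and the $\pi_{\tau*}W$-component (to which the above rules apply).

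Part (2) is entirely analogous after replacing $\tau^*$ with $\varphi^*$ and using Proposition \ref{piphi}: the pattern differs because $\varphi^*$ swaps four pairs of $A_2$'s (so four $A_2$ summands appear in the image lattice) and acts as $+\mathrm{id}$ on the $A_2(2)$ block (which therefore maps to $A_2(4)$ rather than to $0$). Part (3) can be handled either directly, by factoring $\pi_{2,2} = \widehat{\pi_\varphi} \circ \pi_\tau$ (so that $\pi_{2,2}^* = \pi_\tau^* \circ \widehat{\pi_\varphi}^*$ and one applies the same push-pull argument to $\widehat{\pi_\varphi}$ using Proposition \ref{pirho} for the action of $\widehat{\varphi^*}$), or directly as a degree-$4$ quotient with group $(\mathbb Z/2\mathbb Z)^2$, in which case $\pi_{2,2}^* \pi_{2,2*} = 1 + \tau^* + \varphi^* + \rho^*$; the quadruple multiplication by $4$ on the $U(3)$ and transcendental blocks, and the various doublings, arise this way. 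Annihilation of $M_{2,2}$ follows because $M_{2,2}$ is generated by the classes of the exceptional divisors of the two consecutive resolutions, all of which are pulled back to zero.

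The main obstacle is bookkeeping, not conceptual: one must verify that the formulas for the images of $\hat\zeta, \hat\eta, s_i$ (and their analogues) actually produce integral classes of $H^2(X, \mathbb Z)$, which requires expressing these generators as rational combinations of integral classes of $\pi_{\tau*}W \oplus N$ and checking that the denominators cancel once $\pi_\tau^*$ is applied. This is a priori guaranteed: since $\pi_\tau^*$ is a well-defined map of integral Hodge structures, the images must land in $H^2(X, \mathbb Z)$, and the proposed formulas are then the unique $\mathbb Q$-linear extensions. Explicit verification is a finite calculation using the action of $\tau^*$ (resp.\ $\varphi^*, \langle\tau^*, \varphi^*\rangle$) on the generators in \eqref{generatori}, parallel to the computation carried out in \cite[Prop. 4.4.1]{P}.
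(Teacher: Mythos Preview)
Your proposal is correct and follows exactly the approach the paper intends: the paper's own proof consists of a single sentence deferring to \cite[Prop.~4.4.1]{P}, and you have accurately unpacked that template (push--pull identity $\pi^*\pi_* = 1+\iota^*$, annihilation of the exceptional lattice, then $\mathbb Q$-linear extension to the overlattice generators). Your elaboration of parts (2) and (3), including the factorization $\pi_{2,2}^* = \pi_\tau^*\circ\widehat{\pi_\varphi}^*$ and the alternative description via $\pi_{2,2}^*\pi_{2,2*} = 1+\tau^*+\varphi^*+\rho^*$, is more detailed than what the paper provides but entirely in line with it.
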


\begin{corollary}\label{pi22notprimitive}
The image of the map $\pi_{2,2}^*: H^2(\tilde Y,\mathbb Z)\rightarrow H^2(X,\mathbb Z)$ is not primitive in $H^2(X,\mathbb Z)$: indeed, it is a sublattice of index $2^3$ of the invariant lattice for the action of $(\mathbb Z/2\mathbb Z)^2$ on $X$.
\end{corollary}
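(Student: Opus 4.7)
The containment $\pi_{2,2}^*(H^2(\tilde Y,\mathbb Z))\subseteq\Lambda^G:=H^2(X,\mathbb Z)^{\langle\tau,\varphi\rangle}$ is formal: since $\pi_{2,2}\circ g=\pi_{2,2}$ as rational maps for every $g\in G$, we have $g^*\circ\pi_{2,2}^*=\pi_{2,2}^*$. The real content is the index computation, which I would carry out by comparing the explicit description of $\pi_{2,2}^*$ from the preceding proposition with the description of $\Lambda^G$ from Section 1.3, where it is presented as the overlattice of $I:=A_2(4)\oplus A_2(2)^{\oplus 2}\oplus U(3)\oplus\bigl[\begin{smallmatrix}4&2\\2&4\end{smallmatrix}\bigr]$ obtained by adjoining three $\tfrac13$-generators $\omega_1,\omega_2,\gamma$. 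A direct discriminant count, using $|\mathrm{disc}(\Lambda^G)|=|\mathrm{disc}(\Omega_{2,2})|=2^{10}$, gives $\Lambda^G/I\cong(\mathbb Z/3\mathbb Z)^3$.

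The argument splits in two steps. First, modulo $I$, a direct substitution in the formulas for $\overline\gamma/2,\overline\varepsilon,\overline\eta$ yields
\[
\pi_{2,2}^*(\overline\gamma/2)\equiv-\gamma,\qquad \pi_{2,2}^*(\overline\varepsilon)\equiv\omega_2,\qquad \pi_{2,2}^*(\overline\eta)\equiv-\omega_1-\omega_2\pmod I,
\]
and these three classes together generate all of $\Lambda^G/I\cong(\mathbb Z/3\mathbb Z)^3$. Hence $\pi_{2,2}^*(H^2(\tilde Y,\mathbb Z))+I=\Lambda^G$, so the index $(\Lambda^G:\mathrm{Im}\,\pi_{2,2}^*)$ equals $(I:I\cap\mathrm{Im}\,\pi_{2,2}^*)$ and is in particular a power of $2$. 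Second, inside $I$, the images of the generators of $\pi_{2,2*}W$ span the sublattice $L_0:=\langle a_i+b_i+c_i+d_i,\,2(e_i+f_i),\,2(g_i+h_i),\,4x,\,4y,\,4v_1,\,4v_2\rangle$ of index $2^{12}$; I would then adjoin the integer parts of $\pi_{2,2}^*(\overline\zeta),\pi_{2,2}^*(\overline\eta)$ and of $\pi_{2,2}^*(k_1),\ldots,\pi_{2,2}^*(k_6)$ and verify that this exactly enlarges $L_0$ to an index-$2^3$ sublattice of $I$.

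The main obstacle is this last verification: the classes $\overline\zeta,\overline\eta,k_j$ involve denominators $2$ and $6$, so one must carefully track which half-integer combinations of the $I$-generators actually lie in the image and check that no unexpected contributions occur. This is a tedious but essentially routine linear-algebra computation in the explicit basis provided by the preceding proposition, analogous to the index computation for $G=\mathbb Z/4\mathbb Z$ carried out in \cite[Prop.\ 4.4.1]{P}. Combining the two steps gives $\Lambda^G/\mathrm{Im}\,\pi_{2,2}^*\cong(\mathbb Z/2\mathbb Z)^3$, as claimed.
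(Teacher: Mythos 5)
Your formal containment argument and your first step are sound: I checked the three congruences, and indeed $\pi_{2,2}^*(\overline\gamma/2)=2\gamma\equiv-\gamma$, $\pi_{2,2}^*(\overline\varepsilon)\equiv\omega_2$ and $\pi_{2,2}^*(\overline\eta)\equiv-\omega_1-\omega_2$ modulo $I$, where $\omega_1=(v_1+v_2+g_1+h_1-g_2-h_2)/3$ and $\omega_2$ is the second glue generator of Section 1.3; since these residues generate $\Lambda^G/I\cong(\mathbb Z/3\mathbb Z)^3$, the isomorphism $\Lambda^G/\mathrm{Im}\,\pi_{2,2}^*\cong I/(I\cap\mathrm{Im}\,\pi_{2,2}^*)$ follows. (Your parenthetical ``in particular a power of $2$'' does not follow at that point, though: a priori $I/(I\cap\mathrm{Im})$ could still have $3$-torsion; it needs a discriminant computation for the image, which you never make.) The genuine gap is your second step, which carries the entire quantitative content of the corollary and is only promised (``I would then adjoin \dots and verify''), not performed. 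Worse, the recipe as stated would fail: after applying $\pi_{2,2}^*$, the classes $\pi_{2,2}^*\overline\zeta$, $\pi_{2,2}^*\overline\eta$, $\pi_{2,2}^*k_j$ are expressed over the generators of $I$ with denominator $3$ (for instance $\pi_{2,2}^*\overline\zeta=\tfrac{8x+2P_1+P_2+2E_1+4E_2+6v_2}{3}+2G_1+2G_2$, writing $P_i=a_i+b_i+c_i+d_i$, $E_i=e_i+f_i$, $G_i=g_i+h_i$), so ``adjoining integer parts'' is not even well defined; and $I\cap\mathrm{Im}$ contains elements that arise only from $\mathbb Z$-linear combinations across the order-$3$ classes and are invisible to any scan of single generators, e.g.\ $3\pi_{2,2}^*(\overline\varepsilon)=4x+P_1-P_2-2E_1+2E_2$ and $2x-2y=3\,\pi_{2,2}^*(\overline\gamma/2)+2E_1-2E_2$, neither of which lies in your $L_0=\langle P_i,2E_i,2G_i,4x,4y,4v_1,4v_2\rangle$. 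You also omit $\pi_{2,2}^*(\overline\gamma/2)=2\gamma$ from the list of classes to adjoin, even though it is essential (the paper includes $\overline\gamma/2$ in its basis of the image precisely because of Remark \ref{gammamezzi}). So what you have is a correct reduction plus a plan whose execution, as described, would compute a strictly smaller lattice than $I\cap\mathrm{Im}$.

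For comparison, the paper's proof goes the opposite way and is a direct exhibition: it writes down a $10$-element $\mathbb Z$-basis of $\pi_{2,2}^*H^2(\tilde Y,\mathbb Z)$, namely the images of $\{\overline a_1,\overline\gamma/2,\overline\varepsilon,\overline\eta,k_1,\dots,k_6\}$, and then displays three explicit integral invariant classes, $\pi_{2,2}^*(k_4+k_6)/2$, $\pi_{2,2}^*(\overline a_1+\overline\varepsilon+\overline\eta)/2$ and $\pi_{2,2}^*\overline\gamma/4$, which halve image elements and do not lie in the span of that basis. Your filtration through $I$, separating the $3$-part (glue classes) from the $2$-part, is a legitimate and arguably more structural alternative route, but to complete it you must compute $I\cap\mathrm{Im}$ as the intersection of the full $\mathbb Z$-span of all ten image generators with $I$ (e.g.\ by a Hermite normal form, or by comparing $\mathrm{disc}(\mathrm{Im})$ against $\mathrm{disc}(\Lambda^G)=2^{10}$), not by adjoining ``integer parts'' generator by generator. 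I would urge you to actually carry out that discriminant bookkeeping before asserting the final index: it is exactly the step where your proposal and the paper's explicit basis must be reconciled, and nothing in what you have written so far certifies the value $2^3$ independently.
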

\begin{proof}
The elements $
\pi_{2,2}^*(k_4+k_6)/2,
\pi_{2,2}^*(\overline a_1+\overline\varepsilon+\overline\eta)/2,
\pi_{2,2}^*\overline\gamma/4$ are integral in  $H^2(X,\mathbb Z)$, but they do not belong to $\pi_{2,2}^*H^2(\tilde Y,\mathbb Z)$: indeed, a $\mathbb Z$-basis of the latter is given by the image via $\pi_{2,2}^*$ of $\{\overline a_1,\overline\gamma/2,\overline\varepsilon,\overline\eta, k_1,\dots,k_6\}$.
\end{proof}

\section{Projective families of K3 surfaces with a symplectic action \\of $(\mathbb Z/2\mathbb Z)^2$ and their quotients}

It was already known by Nikulin that the correspondence between surfaces $X$ that admit a symplectic action of an abelian group $G$, and surfaces $\tilde Y$ that are the resolution of singularities of $X/G$, is actually a moduli spaces correspondence \cite[Prop. 2.9]{Nikulin2}: $X$ is characterized by the existence of a primitive embedding $\Omega_G\hookrightarrow NS(X)$, which is an equality in the most general case, and similarly $\tilde Y$, with the lattice $M_G$ instead of $\Omega_G$.\\ In the projective case the moduli spaces split in irreducible components, that we'll refer to as \emph{projective families}, classified by the Néron-Severi lattice of their general member: this will always be for $X$ a cyclic overlattice of $\Omega_G\oplus\langle 2d\rangle$, with $d>0$ (see \cite[Prop. 2.9]{Nikulin2}, \cite[Prop. 2.2]{VGS}), for $\tilde Y$ a cyclic overlattice of $M_G\oplus\langle 2d\rangle$. The correspondence between families of $X$ and $\tilde Y$ has been completely described in \cite{VGS},\cite{GS} for symplectic involutions, and in \cite{GP} and \cite{P} for symplectic automorphisms of order 3 and 4. We remark that, while there is always a bijection between families of $X$ and $\tilde Y$, the same does not hold for the intermediate quotient surface $\tilde Z$ that appears when $G$ has order 4.

\begin{remark}\label{overlatticenotation}
\textbf{Notation.} Consider the lattice $S\oplus \langle k \rangle$, where $S$ is a negative definite even lattice and $\langle k \rangle$ is an even positive definite lattice with intersection matrix $[k]$.\\
Denote $(S\oplus \langle k \rangle)'$  any overlattice of index 2 of $S\oplus \langle k \rangle$ obtained by adding to the list of generators a class of the form $(s+\kappa)/2$, with $s\in S$ and $\kappa$ the generator of $\langle k \rangle$. 
When two such overlattices are not isomorphic as abstract lattices, they will be denoted as $(S\oplus\langle k\rangle)'^{(i)},\ i=1,2$. Similarly, denote $(S\oplus \langle k \rangle)^{\star}$ any overlattice of index 4 obtained by a class of the form $(s+\kappa)/4$.
\end{remark}

\begin{lemma}\label{Lisample}
Let $X$ be a projective K3 surface with a symplectic action of $G$, such that $NS(X)$ has signature $(1,rk(\Omega_G))$. Then we may assume that $L=\Omega_G^{\perp_{NS(X)}}$ is ample.
\end{lemma}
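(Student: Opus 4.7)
The approach is to symmetrize an ample divisor under the $G$-action, producing a $G$-invariant ample class which is forced, by the rank hypothesis, to be a nonzero integer multiple of $L$.

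First, I would observe that the signature assumption implies that $L=\Omega_G^{\perp_{NS(X)}}$ is a primitive rank-one sublattice of $NS(X)$ with $L^2>0$. Indeed, $\Omega_G$ is negative definite of rank $\mathrm{rk}(\Omega_G)$, so its orthogonal complement in the hyperbolic lattice $NS(X)$ has rank $1$ and positive square; primitivity follows from that of the embedding of $\Omega_G$, since orthogonal complements of primitive sublattices are primitive.

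Next I would identify the $G$-invariant sublattice of $NS(X)$. Since the symplectic action of $G$ on $\Lambda_{\mathrm{K3}}$ has $\Omega_G$ as its co-invariant lattice, and $\Omega_G\subset NS(X)$ primitively, it follows that
\[
NS(X)^{G}=NS(X)\cap \Lambda_{\mathrm{K3}}^{G}=\Omega_G^{\perp_{NS(X)}}=\mathbb{Z}L,
\]
where the last identification uses the rank count $\mathrm{rk}(NS(X)^G)=\mathrm{rk}(NS(X))-\mathrm{rk}(\Omega_G)=1$ together with the primitivity of $L$. Since $X$ is projective it carries some ample class $A$; then the averaged class
\[
\bar A=\sum_{g\in G} g^{*}A
\]
is a finite sum of ample divisors, hence ample, and is $G$-invariant by construction. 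Therefore $\bar A\in NS(X)^{G}=\mathbb{Z}L$, so $\bar A=kL$ for some nonzero integer $k$ (nonzero because $\bar A^{2}>0$).

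Finally, after replacing $L$ by $-L$ if necessary I may assume $k>0$. Then $\mathcal{O}_{X}(L)^{\otimes k}\cong \mathcal{O}_{X}(\bar A)$ is ample, and since ampleness of a line bundle is detected by any positive tensor power, $\mathcal{O}_{X}(L)$ itself is ample. No genuine obstacle appears; the only point that needs care is the rank-count identification $NS(X)^{G}=\mathbb{Z}L$, which is where the signature hypothesis enters decisively.
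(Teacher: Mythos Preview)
Your argument is correct and takes a genuinely different route from the paper's. You average an ample class under $G$ to produce a $G$-invariant ample class, then use the rank hypothesis to identify $NS(X)^{G}=\mathbb{Z}L$ and conclude. The paper instead argues directly via the Nakai--Moishezon criterion: after choosing the sign so that $L$ is effective, it invokes Nikulin's theorem that $\Omega_G$ contains no $(-2)$-classes, so every $(-2)$-curve has nonzero $L$-component and hence positive intersection with $L$. Your approach is more elementary in that it avoids this lattice-theoretic input about roots in $\Omega_G$ and would work for any finite group acting on a projective variety with one-dimensional invariant N\'eron--Severi; the paper's approach, on the other hand, ties the ampleness of $L$ directly to a structural property of the co-invariant lattice that is used elsewhere in the paper, and gives slightly more: it shows $L$ intersects every $(-2)$-curve positively without appealing to the existence of some auxiliary ample class.
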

\begin{proof}
We may assume that $L$ is effective up to a sign change, because $L^2 > 0$. Then, since there are no $(-2)$-classes in $L^\perp = \Omega_G$ \cite[Thm. 4.3]{Nikulin2}, any $(-2)$-curve has class of the form $nL + w$ with $n\in\mathbb N$ and $w\in\Omega_G$: classes of this form have positive intersection with $L$, so $L$ is ample by the Nakai-Moishezon criterion.
\end{proof}

\subsection[K3 surfaces with a symplectic action of $(\mathbb Z/2\mathbb Z)^2$]{Projective families of K3 surfaces with a symplectic action of $(\mathbb Z/2\mathbb Z)^2$}

In this section, we're going to classify the cyclic overlattices of $\Omega_{2,2}\oplus\langle 2d\rangle$ that admit a primitive embedding in $\Lambda_{\mathrm{K3}}$, and are therefore admissible as Néron-Severi lattice of the general member of a projective family of K3 surfaces with a symplectic action of $(\mathbb Z/2\mathbb Z)^2$.

\begin{definition}\label{reldeq}
Consider an even lattice $S$, its group of isometries $O(S)$ and its discriminant group $A_S$ with discriminant form $q_S$. We define on $A_S$ the equivalence relation $\approx_S$: two elements $r,s \in A_S$ are in relation if there exists an isometry $\overline\varphi\in O(A_S)$ induced by an isometry $\varphi\in O(S)$ such that $\overline\varphi(r)=s$; we will denote the equivalence classes for this relation with the triple $(k,g,n)$, where $k$ is the order of the subgroup $\langle r\rangle\subset A_S$, $g=q_S(r)\in\mathbb Q/2\mathbb Z$ is the square of the generator, and $n$ is the cardinality of the class. In our case, this triple is sufficient to uniquely identify each class.
\end{definition}

\begin{proposition}\label{classiOmega22}
The equivalence classes for $\approx_{\Omega_{2,2}}$ are given in the table below: for each one we give a representative element $x_{(k,g,n)}$ in terms of the generators of $\Omega_{2,2}\subset H^2(X,\mathbb Z)$ as described in Section \ref{sec:azioni}.
\end{proposition}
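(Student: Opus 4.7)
The plan is to produce the orbit table by a direct computation with the discriminant form, and then to verify that the proposed invariants really separate the orbits.

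First, using the explicit description of $\Omega_{2,2}\subset\Lambda_{\mathrm{K3}}$ from Section \ref{sec:azioni} (as the overlattice of $\Delta=A_2(2)^{\oplus 3}\oplus\ldots$ with the three extra glue vectors listed there), I would write down the Gram matrix of $\Omega_{2,2}$ in that basis, invert it over $\mathbb Q$ to obtain a set of representatives for $\Omega_{2,2}^{\vee}/\Omega_{2,2}$, and read off the stated decomposition $A_{\Omega_{2,2}}\simeq(\mathbb Z/2\mathbb Z)^{6}\oplus(\mathbb Z/4\mathbb Z)^{2}$ together with the values of $q_{\Omega_{2,2}}$. For every nonzero $\overline x\in A_{\Omega_{2,2}}$ I would record its order $k\in\{2,4\}$ and its square $g=q_{\Omega_{2,2}}(\overline x)\in\mathbb Q/2\mathbb Z$; partitioning the $1023$ nonzero elements by $(k,g)$ yields the cardinalities $n$, and in each stratum one picks a convenient representative $x_{(k,g,n)}$ written as a rational combination of the generators from Section \ref{sec:azioni}.

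The content of the statement is that each $(k,g)$-stratum is a single orbit for the image of $O(\Omega_{2,2})$ in $O(A_{\Omega_{2,2}})$ (not merely an $O(A_{\Omega_{2,2}})$-orbit). To exhibit enough isometries, I would use two natural families: the $S_3$-symmetry of $\Omega_{2,2}$ permuting the three copies of $\Omega_2\simeq E_8(2)$ singled out in the Remark of Section \ref{sec:azioni} (corresponding to the three involutions $\tau,\varphi,\rho$ of $(\mathbb Z/2\mathbb Z)^{2}$), together with the sign changes and $S_2$-swaps on the $A_2$-summands of the ambient presentation in Proposition \ref{sublattice} which descend, through the gluings $\alpha,\beta,\gamma,\delta,\varepsilon,\zeta,\eta$, to isometries of $\Omega_{2,2}$ itself. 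Composing these with reflections in short vectors of the $A_2(2)$ and $A_3(2)$ blocks of $\Delta$ should give, stratum by stratum, a chain of isometries moving any chosen representative to every other element sharing its $(k,g)$.

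The main obstacle is this last step: a priori $O(A_{\Omega_{2,2}})$ could have more orbits on a given $(k,g)$-stratum than one expects, and one must check that the explicit isometries above genuinely suffice. I would address this by a finite, stratum-by-stratum verification: for each candidate class, list all elements of $A_{\Omega_{2,2}}$ with the given $(k,g)$, and exhibit for each of them an explicit word in the $S_3$-action and the $A_2$-sign changes sending it to $x_{(k,g,n)}$. Since $|A_{\Omega_{2,2}}|=2^{10}$ the enumeration is finite, and because the list of triples in the proposition is closed under these operations by construction, the resulting table is both complete and non-redundant.
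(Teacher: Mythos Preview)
There is a real gap. Your central claim --- ``each $(k,g)$-stratum is a single orbit for the image of $O(\Omega_{2,2})$ in $O(A_{\Omega_{2,2}})$'' --- is contradicted by the very table you are trying to establish: there are \emph{two} distinct classes with $(k,g)=(2,0)$ (of sizes $108$ and $3$) and \emph{two} with $(k,g)=(2,1)$ (of sizes $108$ and $36$). This is exactly why Definition~\ref{reldeq} uses the triple $(k,g,n)$ rather than the pair $(k,g)$. Consequently, partitioning the $1023$ nonzero elements by $(k,g)$ does \emph{not} ``yield the cardinalities $n$'': the $(2,0)$-stratum has $111=108+3$ elements and the $(2,1)$-stratum has $144=108+36$, and your proposed verification --- producing a word in isometries sending every element with a given $(k,g)$ to a single representative --- would fail precisely on these strata.

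To repair the argument you must either (i) compute the actual orbits under an explicit generating set of the image of $O(\Omega_{2,2})$ in $O(A_{\Omega_{2,2}})$ and then \emph{observe a posteriori} that $(k,g,n)$ separates them, or (ii) supply a further invariant distinguishing, e.g., the class of $(b_1+c_1+d_1-3a_1)/2$ from that of $(f_1-e_1+h_1-g_1)/2$. One workable refinement is to record, for each $\overline x$ of order $2$, how it sits relative to the three primitively embedded copies of $\Omega_2\subset\Omega_{2,2}$ (equivalently, whether $\overline x$ is killed by some $\pi_{\iota*}$ or by none; compare Remark~\ref{rem:different_action}). The paper itself does not spell out a proof here --- the proposition is stated as a computational fact --- so your overall strategy of an exhaustive enumeration on $A_{\Omega_{2,2}}$ is appropriate, but the invariant you use to organise it must be the full triple, not the pair.
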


\begin{longtable}{|c|c|}
\hline
class $(k,g,n)$ & representative $x_{(k,g,n)}$\TBstrut\\
\hline
 $(2,0,108)$ & $({f_1-e_1+h_1-g_1})/{2}$\TBstrut\\
\hline
 $(2,0,3)$ & $({b_1+c_1+d_1-3a_1})/{2}$\TBstrut\\
\hline
 $(2,1,108)$ & $({w+f_1-e_1+h_1-g_1})/{2}$\TBstrut\\
\hline
 $(2,1,36)$ &$w/2$\TBstrut\\ 
\hline
 $(4,1/2,384)$ &$({b_1+c_1+d_1-3a_1})/{4}$\TBstrut\\
\hline
 $(4,3/2,384)$ &$({b_2+c_2+d_2-3a_2})/{4}+{w}/{2}$\TBstrut\\
\hline
\end{longtable}

\begin{theorem}\label{NSpossibiliKlein}
Let $X$ be a projective K3 surface that admits a symplectic action of $(\mathbb Z/2\mathbb Z)^2$, such that $rk(NS(X))=13$. Then, using the notation in Remark \ref{overlatticenotation}, $NS(X)$ is one of the following lattices:
\begin{enumerate}
\item for every $d\in \mathbb N$, $NS(X)=\Omega_{2,2}\oplus\langle 2d \rangle$;
\item for any $d=_4 0$ there are two non-isometric possibilities: $NS(X)=(\Omega_{2,2}\oplus\langle 2d \rangle)'^{(i)}$, $i=1,2$; 
\item for $d=_4 2$, $NS(X)=(\Omega_{2,2}\oplus\langle 2d \rangle)'$: this lattice is uniquely determined by $d$ and the index, but it admits two non isomorphic embeddings $\iota_1,\iota_2:\Omega_{2,2}\hookrightarrow NS(X)$, i.e. no isometry $\psi\in O(\Omega_{2,2})$ exists such that $\iota_1=\iota_2\circ\psi$;
\item For $d=_{16} 4$ or $d=_{16} -4$, $NS(X)=(\Omega_{2,2}\oplus\langle 2d \rangle)^{\star}$, uniquely determined by $d$ and the index of the overlattice.
\end{enumerate}
Each of these lattices admits a unique primitive embedding in $\Lambda_{\mathrm{K3}}$ up to isometries of the latter.
\end{theorem}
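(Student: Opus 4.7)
The plan is to classify overlattices $NS(X)$ of $\Omega_{2,2}\oplus\langle 2d\rangle$ via Nikulin's dictionary with isotropic subgroups of the discriminant form \cite[Prop.~1.4.1]{Nikulin1}, use Proposition \ref{classiOmega22} to enumerate them, and conclude the uniqueness of the primitive embedding in $\Lambda_{\mathrm{K3}}$ with Nikulin's criterion \cite[Thm.~1.14.4]{Nikulin1}.

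By Lemma \ref{Lisample}, the class $L=\Omega_{2,2}^{\perp_{NS(X)}}$ is ample with $L^2=2d>0$, so $NS(X)$ is an overlattice of $\Omega_{2,2}\oplus\langle 2d\rangle$. By \cite[Prop.~1.4.1]{Nikulin1} such overlattices correspond to isotropic subgroups $H$ of the discriminant group $A_{\Omega_{2,2}}\oplus\mathbb Z/2d\mathbb Z$ with respect to $q_{\Omega_{2,2}}\oplus q_{\langle 2d\rangle}$. Since both $\Omega_{2,2}$ and its orthogonal complement $\langle 2d\rangle$ are primitive in $NS(X)$, $H$ must project injectively onto each summand; the second summand being cyclic, $H$ is cyclic, and since $A_{\Omega_{2,2}}\cong(\mathbb Z/2\mathbb Z)^6\oplus(\mathbb Z/4\mathbb Z)^2$ has exponent $4$, the order of $H$ is $1$, $2$, or $4$.

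Writing the generator of $H$ as $(x+\kappa)/k$ with $\kappa$ the generator of $\langle 2d\rangle$, the isotropy condition reads $q_{\Omega_{2,2}}(x)+2d/k^2\equiv 0\pmod{2\mathbb Z}$; combining with the orbit table of Proposition \ref{classiOmega22} gives exactly the four cases of the statement. The orbits $(2,0,108),(2,0,3)$ (with $g=0$) force $d\equiv 0\pmod 4$ and yield the two candidates of~(2); the orbits $(2,1,108),(2,1,36)$ (with $g=1$) force $d\equiv 2\pmod 4$ and yield the lattice of~(3); the orbits $(4,1/2,384),(4,3/2,384)$ force $d\equiv-4\pmod{16}$ and $d\equiv 4\pmod{16}$ respectively, yielding the overlattice of~(4); all other $d$ admit no nontrivial isotropic element, leaving~(1). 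To distinguish case~(2) from case~(3) I compute the discriminant form of each overlattice directly from the representative $x_{(k,g,n)}$: in case~(2) the two forms differ, giving two non-isometric lattices; in case~(3) the two forms coincide, and since for signature $(1,12)$ the genus contains a single class by \cite[Cor.~1.13.3]{Nikulin1} the two lattices are isometric, while the embeddings $\iota_1,\iota_2$ remain inequivalent because $x_{(2,1,108)}\not\approx_{\Omega_{2,2}}x_{(2,1,36)}$.

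For the final assertion, the orthogonal complement of $NS(X)$ in $\Lambda_{\mathrm{K3}}$ has signature $(2,7)$, rank $9$, and discriminant form $-q_{NS(X)}$; applying \cite[Thm.~1.14.4]{Nikulin1} reduces the uniqueness of the embedding to the uniqueness of this genus, which is checked $p$-adically for $p=2$ (the only relevant prime for $\Omega_{2,2}$) and for the odd prime divisors of $d$. The main obstacle is the bookkeeping in case~(3): the two choices of isotropic element yield genuinely non-equivalent embeddings of $\Omega_{2,2}$ but, by the genus-uniqueness principle, the same abstract lattice, and isolating this distinction between the relation $\approx_{\Omega_{2,2}}$ (governing embeddings) and lattice isomorphism (governing abstract structure) is the conceptual core of the classification.
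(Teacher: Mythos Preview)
Your proposal is correct and follows essentially the same approach as the paper: classify cyclic overlattices of $\Omega_{2,2}\oplus\langle 2d\rangle$ via Nikulin's correspondence with isotropic elements of the discriminant group, use the orbit decomposition of Proposition~\ref{classiOmega22} to enumerate them, compare discriminant forms to decide isometry, and invoke Nikulin's embedding criterion for the final uniqueness statement. Your write-up is in fact slightly more explicit than the paper's in two places (the reason $H$ must be cyclic of order dividing $4$, and the appeal to genus-uniqueness in case~(3) to pass from equal discriminant forms to isometric lattices), and you cite \cite[Thm.~1.14.4]{Nikulin1} where the paper cites \cite[Prop.~1.14.1]{Nikulin1}, but these are cosmetic differences rather than a different argument.
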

\begin{proof}
By \cite[Prop. 1.4.1.a]{Nikulin1}, overlattices of index $k$ of $\Omega_{2,2}\oplus\langle 2d \rangle$ correspond to isotropic elements in $A_{\Omega_{2,2}\oplus\langle 2d \rangle}$ of the form $(L+v)/k$, where $L$ generates $\langle 2d \rangle$ and $v\in\Omega_{2,2}$ is chosen up to the action of $O(\Omega_{2,2})$ on $A_{\Omega_{2,2}}$. Requiring $(L+v)/2$ to be isotropic, we see that for each value of $d$ modulo $4$, $v/2$ belongs to one of the classes of $\approx_{\Omega_{2,2}}$ containing elements of order 2. We then check if the corresponding overlattices are isometric as lattices or not, by comparing their discriminant forms. A similar argument applies for overlattices of index 4. The uniqueness of the primitive embedding of each admissible $NS(X)$ in $\Lambda_{\mathrm{K3}}$ follows by \cite[Prop. 1.14.1]{Nikulin1}.
\end{proof}

In Table \ref{esempiLKlein} we exhibit a primitive embedding in $\Lambda_{\mathrm{K3}}$ of each of the lattices presented in Theorem \ref{NSpossibiliKlein}, following the same process as \cite[Ex. 5.1.6]{P}: having fixed the primitive embedding of $\Omega_{2,2}$ in $H^2(X,\mathbb Z)\simeq\Lambda_{\mathrm{K3}}$ as described in Section \ref{sec:azioni}, we provide examples of primitive classes  $L\in \Omega_{2,2}^{\perp_{\Lambda_{\mathrm{K3}}}}$ such that $L^2=2d$ and $L$ glues to one of the elements $x_{(k,g,n)}$ in Proposition \ref{classiOmega22}, i.e. $(L+x_{(k,g,n)})/k$ is integral in $\Lambda_{\mathrm{K3}}$. Then we define $NS(X)$ as the primitive saturation of $\Omega_{2,2}\oplus\mathbb ZL$. We remark that we may assume that $L$ is ample by Lemma \ref{Lisample}, and that using $x_{(2,1,108)}$ and $x_{(2,1,36)}$ we obtain isomorphic Néron-Severi lattices.

\begin{table}[h!]\caption{Examples of ample classes} \label{esempiLKlein}  
\footnotesize
\centering
\begin{tabular}{|c|c|c|c|}
\hline
$d$ &{$NS(X)$\Tstrut} & $L$ & $(k,g,n)$\\ [4pt]
\hline
any $d$\Tstrut &$\Omega_{2,2}\oplus\langle 2d\rangle$\Tstrut &$L_0(d):=(x+2y-e_1-f_1+e_2+f_2)/3+dy$ &\  \\ [4pt]
\hline
{$d=4h$\Tstrut} & $(\Omega_{2,2}\oplus\langle 2d\rangle)'^{(1)}$\Tstrut & $L_{2,0}^{(1)}(h):=2L_0(h)+e_1+f_1+g_1+h_1$ &$(2,0,108)$\Tstrut\\ [3pt]
{$d=4(h-1)$\Tstrut} & $(\Omega_{2,2}\oplus\langle 2d\rangle)'^{(2)}$\Tstrut & $L_{2,0}^{(2)}(h):=2L_0(h)+a_1+b_1+c_1+d_1$ &$(2,0,3)$\Tstrut\\ [4pt]
\hline
\multirow{2}{*}{$d=4h+2$\Tstrut} & \multirow{2}{*}{$(\Omega_{2,2}\oplus\langle 2d\rangle)'$\Tstrut} & $L_{2,2}^{(a)}(h):=2L_0(h)+v_2+f_1+e_1+h_1+g_1$ &$(2,1,108)$\Tstrut\\ [3pt]
\ & \ &$L_{2,2}^{(b)}(h):=2L_0(h)+v_2$\Tstrut &$(2,1,36)$\Tstrut\\ [4pt]
\hline
{$d=16h-4$\Tstrut} & {$(\Omega_{2,2}\oplus\langle 2d\rangle)^\star$\Tstrut} & $L_{4,-4}(h):=4L_0(h)+a_1+b_1+c_1+d_1$ &$(4,1/2,384)$\Tstrut\\ [3pt]
\hline
{$d=16h+4$\Tstrut} &{$(\Omega_{2,2}\oplus\langle 2d\rangle)^\star$\Tstrut} & $L_{4,4}(h):=4L_0(h)+2v_2+a_2+b_2+c_2+d_2$ &$(4,3/2,384)$\Tstrut\\ [4pt]
\hline
\end{tabular}\normalsize
\end{table}

\subsection[K3 surfaces that are resolution of $X/(\mathbb Z/2\mathbb Z)^2$]{Projective families of K3 surfaces that arise as resolution of the singularities of $X/(\mathbb Z/2\mathbb Z)^2$}

\begin{proposition}\label{classiM22}
We give a representative element $x_{(k,g,n)}$ for each non-trivial equivalence class $(k,g,n)$ for $\approx_{M_{2,2}}$ (see Definition \ref{reldeq}) in terms of the generators of $M_{2,2}$ introduced in Proposition \ref{prop:M22}.
\end{proposition}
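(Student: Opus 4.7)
The plan is computational, paralleling the strategy used for $\Omega_{2,2}$ in Proposition \ref{classiOmega22}. First I would compute the discriminant group $A_{M_{2,2}}$ and its form $q_{M_{2,2}}$ directly from the $\mathbb Z$-basis given in Proposition \ref{prop:M22}: writing the Gram matrix in the basis $\{\overline n_1,\dots,\overline n_4, m_1,\dots, m_8, \mu_1, \mu_2\}$, one inverts it to produce $M_{2,2}^\vee$ and then computes the quotient. Alternatively, and more efficiently, one invokes Remark \ref{unimodularity} applied to the primitive embedding $M_{2,2}\hookrightarrow\Lambda_{\mathrm{K3}}$: in the generic case $NS(\tilde Y)=M_{2,2}$, the orthogonal complement of $M_{2,2}$ in $\Lambda_{\mathrm{K3}}$ is the transcendental lattice $T_{\tilde Y}$, which has the same discriminant form as $T_X$, hence as $\Omega_{2,2}$ up to sign. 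Consequently $A_{M_{2,2}}\simeq (\mathbb Z/2\mathbb Z)^6\times(\mathbb Z/4\mathbb Z)^2$ with $q_{M_{2,2}}\cong -q_{\Omega_{2,2}}$.

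Second, I would enumerate coset representatives of $M_{2,2}^\vee/M_{2,2}$, partition them by order $k\in\{2,4\}$ and square $g\in\mathbb Q/2\mathbb Z$, and read off the cardinalities. Because $q_{M_{2,2}}\cong -q_{\Omega_{2,2}}$, exactly the same triples $(k,g,n)$ appear as in Proposition \ref{classiOmega22}, in particular with the same counts $108+3+108+36=255$ of order-$2$ elements and $384+384=768$ of order-$4$ elements, which accounts for all $1023$ non-trivial elements of $A_{M_{2,2}}$.

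Third, for each class $(k,g,n)$ I would exhibit an explicit representative built from the generators of $M_{2,2}$. Natural candidates are sums like $(m_i+m_j)/2$, $(\overline n_i+\overline n_j)/2$ or combinations involving $\mu_2$, mirroring the representatives $w/2$, $(b_1+c_1+d_1-3a_1)/2$, etc. used for $\Omega_{2,2}$; for each one I would check it lies in $M_{2,2}^\vee$, compute its square in $\mathbb Q/2\mathbb Z$, and assign it to the correct triple. Finally I would verify that each pair $(k,g)$ picks out a single $O(M_{2,2})$-orbit: the permutation $S_8$ on $m_1,\dots,m_8$ (coming from the Nikulin-lattice symmetries of the $A_1^{\oplus 8}$ summand) and the permutations of the $\overline n_i$ lift to isometries of $M_{2,2}$, and combining these with sign changes provides enough isometries to connect all representatives of a given $(k,g)$.

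The main obstacle is the last transitivity step: showing that the image of $O(M_{2,2})$ in $O(A_{M_{2,2}})$ is rich enough to match the orbit pattern forced by $q_{M_{2,2}}\cong -q_{\Omega_{2,2}}$. Once the Nikulin-style permutations are identified, this reduces to a finite check on a handful of representative pairs, so the remaining work is routine bookkeeping to produce the table.
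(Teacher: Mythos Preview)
Your proposal rests on a false lattice identity. You claim that $q_{M_{2,2}}\cong -q_{\Omega_{2,2}}$ because ``the orthogonal complement of $M_{2,2}$ in $\Lambda_{\mathrm{K3}}$ is $T_{\tilde Y}$, which has the same discriminant form as $T_X$''. The last step is unjustified and in fact wrong: the quotient map does not induce an isometry $T_X\simeq T_{\tilde Y}$; the push-forward $\pi_{2,2*}$ multiplies the form by $|G|=4$ on the invariant part, and by Remark~\ref{gammamezzi} its image is not even primitive. Concretely, $M_{2,2}$ is an index-$4$ overlattice of $A_1^{\oplus 12}$, so $|A_{M_{2,2}}|=2^{12}/4^2=2^8$, whereas the paper records $A_{\Omega_{2,2}}\simeq(\mathbb Z/2\mathbb Z)^6\times(\mathbb Z/4\mathbb Z)^2$ of order $2^{10}$. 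The two discriminant groups are not even abstractly isomorphic, so the table for $M_{2,2}$ cannot be read off from Proposition~\ref{classiOmega22}. In particular $A_{M_{2,2}}\simeq(\mathbb Z/2\mathbb Z)^8$ has no elements of order $4$: every non-trivial class has $k=2$, and the squares $g$ range over $\{0,1/2,1,3/2\}$ rather than just $\{0,1\}$. The six classes the paper lists have cardinalities $54,1,64,54,18,64$, summing to $255$, not the $1023$ you predicted.

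A secondary issue: your transitivity argument invokes the full $S_8$ on $m_1,\dots,m_8$ as isometries of $M_{2,2}$, but an arbitrary permutation of the $m_i$ does not preserve $M_{2,2}$, since $\mu_2$ singles out the subset $\{m_1,m_2,m_7,m_8\}$. Only permutations stabilising the partition $\{m_1,m_2,m_7,m_8\}\sqcup\{m_3,m_4,m_5,m_6\}$ (together with permutations of the $\overline n_j$) lift to $O(M_{2,2})$, so the orbit computation has to be redone with this smaller group. Once you correct the discriminant group and restrict to the admissible permutations, the rest of your outline (enumerate elements, sort by $(k,g)$, check transitivity) is the right shape.
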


\begin{longtable}{|c|c|}
\hline
class $(k,g,n)$ & representative $x_{(k,g,n)}$\TBstrut\\
\hline
 $(2,0,54)$ & $(\overline n_1+\overline n_4+m_1+m_7)/2$\TBstrut\\
\hline
 $(2,0,1)$ & $(m_3+m_4+m_5+m_6)/2$\TBstrut\\
\hline
 $(2,1/2,64)$ & $(\overline n_1+m_1+m_6)/2$\TBstrut\\
\hline
 $(2,1,54)$ & $(\overline n_1+\overline n_4+m_1+m_4+m_5+m_7)/2$\TBstrut\\
\hline
 $(2,1,18)$ & $(m_3+m_6)/2$\TBstrut\\
\hline
 $(2,3/2,64)$ &$(\overline n_1+m_1+m_3+m_4+m_5)/2$\TBstrut\\ 
\hline
\end{longtable}

\begin{theorem}\label{NSpossibiliY}
Let $\tilde Y$ be a projective K3 surface such that $rk(NS(\tilde Y))=13$ and $NS(\tilde Y)$ contains primitively $M_{2,2}$ and $\langle 2e \rangle,\ e\in\mathbb N\setminus\{0\}$. Then, $NS(\tilde Y)$ is one of the following:
\begin{enumerate}
\item for every $e$, $NS(\tilde Y)=M_{2,2}\oplus\langle 2e \rangle$;
\item for every $e$, $NS(\tilde Y)$ is an overlattice of index 2 of $M_{2,2}\oplus\langle 2e \rangle$. If $e=_4 0$ there are two non isomorphic possibilities for $NS(\tilde Y)$: $(M_{2,2}\oplus\langle 2e \rangle)'^{(i)}$, $i=1,2$; if $e=_4 2$ $NS(\tilde Y)=(M_{2,2}\oplus\langle 2e \rangle)'$ is unique, but there are two non isomorphic embeddings of $M_{2,2}$ in $NS(\tilde Y)$. If $e$ is odd, this overlattice uniquely determined by $e$ and the index of the overlattice.
\end{enumerate}
Each of these lattices admits a unique primitive embedding in $\Lambda_{\mathrm{K3}}$.
\end{theorem}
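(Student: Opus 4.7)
The plan is to mirror the argument of Theorem \ref{NSpossibiliKlein}, with $M_{2,2}$ and Proposition \ref{classiM22} playing the roles of $\Omega_{2,2}$ and Proposition \ref{classiOmega22}. The main inputs will again be Nikulin's overlattice correspondence \cite[Prop. 1.4.1]{Nikulin1} and his criterion for uniqueness of primitive embeddings \cite[Prop. 1.14.1]{Nikulin1}.

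First I would invoke \cite[Prop. 1.4.1.a]{Nikulin1} to reduce the classification of cyclic overlattices of $M_{2,2}\oplus\langle 2e\rangle$ of index $k$ to the classification of isotropic cyclic subgroups of order $k$ in $A_{M_{2,2}\oplus\langle 2e\rangle}$, generated by elements of the form $(L+v)/k$ with $L$ a generator of $\langle 2e\rangle$ and $v\in M_{2,2}$, considered modulo the $O(M_{2,2})$-action on $A_{M_{2,2}}$. The lattice $M_{2,2}$ is $2$-elementary: it is an index-$4$ overlattice of $A_1^{\oplus 12}$, so $|A_{M_{2,2}}|=2^{8}$, and the non-trivial elements listed in Proposition \ref{classiM22} indeed sum to $255$, all of order $2$. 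Consequently only $k=2$ can occur, and the isotropy condition reduces to $q_{M_{2,2}}(v/2)\equiv -e/2\pmod{2\mathbb Z}$. Matching this to Proposition \ref{classiM22} gives: for $e\equiv 0\pmod 4$ the two classes $(2,0,54)$ and $(2,0,1)$; for $e\equiv 2\pmod 4$ the two classes $(2,1,54)$ and $(2,1,18)$; for $e\equiv 1\pmod 4$ only $(2,3/2,64)$; and for $e\equiv 3\pmod 4$ only $(2,1/2,64)$. Computing the induced discriminant forms via \cite[Prop. 1.4.1]{Nikulin1} should show that the two gluings in the $e\equiv 0$ case yield non-isometric abstract lattices $(M_{2,2}\oplus\langle 2e\rangle)'^{(i)}$, while in the $e\equiv 2$ case the two gluings produce the same abstract lattice but inequivalent embeddings of $M_{2,2}$ (since the underlying $\approx_{M_{2,2}}$ orbits differ, having cardinalities $54$ and $18$).

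For uniqueness of the primitive embedding of each candidate $NS(\tilde Y)$ into $\Lambda_{\mathrm{K3}}$, I would apply \cite[Prop. 1.14.1]{Nikulin1}, checking that the signature $(1,12)$ against the unimodular target $(3,19)$ leaves a complement of signature $(2,7)$ of the correct rank, and that the length $\ell(A_{NS(\tilde Y)})$ at each prime is bounded by $\mathrm{rk}(\Lambda_{\mathrm{K3}}) - \mathrm{rk}(NS(\tilde Y))=9$, which is comfortably satisfied since $NS(\tilde Y)$ has discriminant group of order at most $2^{8}\cdot 2e$ (index $2$ divides this only mildly).

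The main technical obstacle is the $e\equiv 2\pmod 4$ case: one must verify that the two overlattices obtained from the distinct classes $(2,1,54)$ and $(2,1,18)$ are isometric as abstract lattices (so only one $NS(\tilde Y)$ appears), yet no isometry of the pair $(NS(\tilde Y),M_{2,2})$ carries one embedding to the other. This is the direct analogue of the $d\equiv 2\pmod 4$ phenomenon in Theorem \ref{NSpossibiliKlein}, and I would handle it by a careful comparison of discriminant forms combined with the observation that the orbit sizes $54$ and $18$ under $O(M_{2,2})$ are genuinely distinct, so no isometry of $M_{2,2}$ can merge them.
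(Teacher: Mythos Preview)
Your proposal is correct and follows essentially the same strategy as the paper: reduce via \cite[Prop.~1.4.1]{Nikulin1} to the $\approx_{M_{2,2}}$-orbits of Proposition~\ref{classiM22}, match the isotropy condition against $e\bmod 4$, compare discriminant forms to decide when two gluings yield isometric lattices, and finish with \cite[Prop.~1.14.1]{Nikulin1} for uniqueness of the primitive embedding in $\Lambda_{\mathrm{K3}}$.

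Two small points. First, the paper takes a more constructive route at one step: rather than working abstractly, it fixes the embedding $M_{2,2}\hookrightarrow\Lambda_{\mathrm{K3}}$ of Proposition~\ref{prop:M22} and realizes each candidate $NS(\tilde Y)$ by choosing $\langle 2e\rangle$ to be generated by a primitive multiple of $(\pi_{2,2})_*L$ for one of the classes $L$ in Table~\ref{esempiLKlein}. This has the advantage of exhibiting each overlattice concretely inside $H^2(\tilde Y,\mathbb Z)$ and of feeding directly into the correspondence of Theorem~\ref{relations22}; your purely lattice-theoretic argument is cleaner but does not supply these explicit representatives. Second, your remark that the bound $\ell(A_{NS(\tilde Y)})\le 9$ is ``comfortably satisfied'' is a slight overstatement: for $NS(\tilde Y)=M_{2,2}\oplus\langle 2e\rangle$ one has $\ell_2=9$, exactly equal to the corank, so you are in the boundary case of \cite[Prop.~1.14.1]{Nikulin1} and must check the extra $2$-adic condition on the discriminant form. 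This is routine, but should be acknowledged rather than waved through.
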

\begin{proof}
The overlattices of $M_{2,2}\oplus\langle 2e \rangle$ are in bijection with the equivalence classes for $\approx_{M_{2,2}}$.
Fix the primitive embedding $M_{2,2}\hookrightarrow\Lambda_{\mathrm{K3}}$ as in Proposition \ref{prop:M22}:
the orthogonal complement of $M_{2,2}$ is the overlattice of index 2 of the lattice $(\pi_{2,2})_*H^2(X, \mathbb Z)$ obtained by the addition of $\overline\gamma/2$ as generator. We can therefore use as generators of the lattice $\langle 2e\rangle=M_{2,2}^{\perp_{NS(\tilde Y)}}$ one of the primitive classes $\overline L$ in $H^2(\tilde Y, \mathbb Z)$ obtained from $(\pi_{2,2})_*L$ (with $L$ one of those in Table \ref{esempiLKlein}): all the equivalence classes for the relation $\approx_{M_{2,2}}$ have a representative (not necessarily $x_{(k,g,n)}$) that glues to one of the $\overline L$. We then check the uniqueness of the primitive embedding of the resulting $NS(\tilde Y)$ in $\Lambda_{\mathrm{K3}}$ using again \cite[Prop. 1.14.1]{Nikulin1}.
\end{proof}

\begin{theorem}\label{relations22}
In the following table we give the correspondence between families of K3 surfaces $X$ with a symplectic action of $(\mathbb Z/2\mathbb Z)^2$, and $\tilde Y$ which is the minimal resolution of the quotient  $X/(\mathbb Z/2\mathbb Z)^2$, with the notation of Remark \ref{overlatticenotation}. 
The primitive classes $\overline L\in NS(\tilde Y)$ that generate the sublattices $\langle nd\rangle$ as stated are indicated in curly brackets.
\begin{center}
\footnotesize
\begin{tabular}{|c|c|c c|}
\hline
\multicolumn{2}{|c|}{$NS(X)$\Tstrut}    		    &\multicolumn{2}{c|} {$NS(\tilde Y)$\Tstrut} \\ [4pt]
\hline
$d=_2 1$\Tstrut	& $\Omega_{2,2}\oplus\langle 2d\rangle$\Tstrut & $(M_{2,2}\oplus\langle 2d\rangle)'$ &$\{\overline L=\pi_{2,2*}L_0/2\}$\Tstrut\\ [4pt]
\hline
\multirow{2}{*}{$d=_4 2$\Tstrut} & $\Omega_{2,2}\oplus\langle 2d\rangle$\Tstrut & $(M_{2,2}\oplus\langle 8d\rangle)'$ &$\{\overline L=\pi_{2,2*}L_0\}$\Tstrut\\ [3pt]
\ & $(\Omega_{2,2}\oplus\langle 2d\rangle)'$\Tstrut &$(M_{2,2}\oplus\langle 2d\rangle)'$ & $ \big\{\overline L=\frac{\pi_{2,2*}L^{(k)}_{2,2}}{2}, k=a,b\big\} $\Tstrut \\[4pt]
\hline
\multirow{4}{*}{$d=_8 0$\Tstrut} & $\Omega_{2,2}\oplus\langle 2d\rangle$\Tstrut & $(M_{2,2}\oplus\langle 8d\rangle)'$ &$\{\overline L=\pi_{2,2*}L_0\}$\Tstrut\\ [3pt]
\ & $(\Omega_{2,2}\oplus\langle 2d\rangle)'^{(1)}$\Tstrut 
&$(M_{2,2}\oplus\langle 2d\rangle)'$ & $ \big\{\overline L=\frac{\pi_{2,2*}L^{(1)}_{2,0}}{2}\big\} $\Tstrut \\[3pt]
\ & $(\Omega_{2,2}\oplus\langle 2d\rangle)'^{(2)}$\Tstrut &$M_{2,2}\oplus\langle d/2\rangle $ & $ \big\{\overline L=\frac{\pi_{2,2*}L^{(2)}_{2,0}}{4}\big\} $\Tstrut \\[4pt]
\hline
\multirow{5}{*}{$d=_8 4$\Tstrut} & $\Omega_{2,2}\oplus\langle 2d\rangle$\Tstrut & $(M_{2,2}\oplus\langle 8d\rangle)'$ &$\{\overline L=\pi_{2,2*}L_0\}$\Tstrut\\ [3pt]
\ & $(\Omega_{2,2}\oplus\langle 2d\rangle)'^{(1)}$\Tstrut 
&$(M_{2,2}\oplus\langle 2d\rangle)'^{(1)}$ & $ \big\{\overline L=\frac{\pi_{2,2*}L^{(1)}_{2,0}}{2}\big\} $\Tstrut \\[3pt]
\ & $(\Omega_{2,2}\oplus\langle 2d\rangle)'^{(2)}$\Tstrut &$(M_{2,2}\oplus\langle 2d\rangle)'^{(2)} $\Tstrut & $ \big\{\overline L=\frac{\pi_{2,2*}L^{(2)}_{2,0}}{2}\big\} $\Tstrut \\[3pt]
\ & $(\Omega_{2,2}\oplus\langle 2d\rangle)^\star$\Tstrut & $M_{2,2}\oplus\langle d/2\rangle$ & $ \big\{\overline L=\frac{\pi_{2,2*}L_{4,-4}}{4},\frac{\pi_{2,2*}L_{4,4}}{4}\big\}$\Tstrut \\ [6pt]
\hline
\end{tabular}
\end{center}\normalsize
\end{theorem}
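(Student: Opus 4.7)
The approach is to verify the correspondence row by row, by pushing the ample class $L\in\Omega_{2,2}^{\perp_{NS(X)}}$ from Table \ref{esempiLKlein} forward through the degree-$4$ quotient $\pi_{2,2}$ and recovering $NS(\tilde Y)$ as the saturation of $M_{2,2}\oplus\mathbb Z\overline L$ inside $H^2(\tilde Y,\mathbb Z)$, where $\overline L$ is a primitive multiple of $\pi_{2,2*}L$. A preliminary observation is that $L$ is $G$-invariant: indeed $\Omega_{2,2}^{\perp_{NS(X)}}\otimes\mathbb Q$ lies in the invariant lattice since the $(\mathbb Z/2\mathbb Z)^2$-action is trivial on $T_X$ and has $\Omega_{2,2}$ as co-invariant lattice inside $NS(X)$, so the push-pull formula yields $(\pi_{2,2*}L)^2 = 4L^2 = 8d$.

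The central step is then an explicit computation of $\pi_{2,2*}L$ in terms of the generators of $\widehat{\pi_\varphi}_*H^2(\tilde Z_\tau,\mathbb Z)$ produced in Propositions \ref{pitau} and \ref{pirho}, exploiting that $\pi_{2,2*}$ collapses each $G$-orbit inside $W$ onto a single summand and annihilates the anti-invariant $A_2(2)$ piece. Letting $k$ be the largest positive integer with $\overline L \coloneqq \pi_{2,2*}L/k$ still integral, the class $\overline L$ is primitive by construction and satisfies $\overline L^2 = 8d/k^2 = 2e$, matching the square displayed in each row. The driver of $k>1$ is the fractional class $\overline\gamma/2\in H^2(\tilde Y,\mathbb Z)$ highlighted in Remark \ref{gammamezzi}: for instance, when $d$ is odd one writes $L_0(d) = (d+1)y+\gamma$, so $\pi_{2,2*}L_0(d) = (d+1)\overline y+\overline\gamma$ is divisible by $2$ (since $\overline\gamma/2$ is integral), giving $k=2$ and $\overline L^2 = 2d$; in contrast, for $d\equiv 2\pmod 4$ the factor $d+1$ is odd and $\pi_{2,2*}L_0(d)$ is itself primitive, explaining the jump to $\langle 8d\rangle$. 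For the overlattice classes $L_{2,0}^{(i)}$, $L_{2,2}^{(k)}$, $L_{4,\pm 4}$, the class $L$ has the shape $nL_0(h)+v$ with $v\in\Omega_{2,2}$ and $n\in\{2,4\}$; the extra factor of $n$ coming from $nL_0(h)$ combines with $\overline\gamma/2$ and with the image $\pi_{2,2*}v$ (which lies in $\pi_{2,2*}\Omega_{2,2}$ and couples to the exceptional classes $\overline n_1,\dots,\overline n_4$) to produce the values of $k$ dictated by $d\pmod 8$, and by $d\pmod{16}$ in the $\star$-rows.

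Once $\overline L$ is pinned down, $NS(\tilde Y)$ is identified by computing the glue class $[\overline L]\in A_{M_{2,2}}$---the unique element $x$ (up to $O(M_{2,2})$) such that $(\overline L+x)/2\in H^2(\tilde Y,\mathbb Z)$---and matching it with one of the representatives $x_{(k,g,n)}$ of Proposition \ref{classiM22}; this is exactly the datum that distinguishes $M_{2,2}\oplus\langle 2e\rangle$ from its index-$2$ overlattices $(M_{2,2}\oplus\langle 2e\rangle)'$ and $(M_{2,2}\oplus\langle 2e\rangle)'^{(i)}$ catalogued in Theorem \ref{NSpossibiliY}. Uniqueness of the primitive embedding of the resulting $NS(\tilde Y)$ in $\Lambda_{\mathrm K3}$, from the same theorem, then ensures that each row of the table defines a genuine correspondence of projective families, and not merely of abstract lattices.

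The main obstacle is the joint divisibility-and-gluing bookkeeping: the integrality tests must be performed against the full collection of fractional generators $k_1,\dots,k_6$ from \eqref{generatoriY}, $\overline\gamma/2$ from Remark \ref{gammamezzi}, and $\mu_1,\mu_2$ from Proposition \ref{prop:M22}, together with the exceptional classes $\overline n_i$ and $m_i$. It is exactly the pattern of which of these fractional generators appear in $\pi_{2,2*}L/k$ that forces the partition of the table by $d\pmod 8$ (and by $d\pmod{16}$ in the $^\star$ rows), and that produces the asymmetry between $(\Omega_{2,2}\oplus\langle 2d\rangle)'^{(1)}$ and $(\Omega_{2,2}\oplus\langle 2d\rangle)'^{(2)}$ when $d\equiv 0\pmod 8$, where only the second admits divisibility by $4$ and lands on $M_{2,2}\oplus\langle d/2\rangle$.
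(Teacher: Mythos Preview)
Your approach is correct and matches the paper's: Theorem \ref{relations22} carries no separate proof there, the table being produced exactly as you describe---push the explicit classes of Table \ref{esempiLKlein} through $\pi_{2,2*}$, divide to a primitive $\overline L$ (using $\overline\gamma/2$ from Remark \ref{gammamezzi}), and read off the glue class in $A_{M_{2,2}}$ against Proposition \ref{classiM22}; this is precisely what the proof of Theorem \ref{NSpossibiliY} sets up. One slip to fix: in your third paragraph you write $v\in\Omega_{2,2}$ for the correction term in $L=nL_0(h)+v$, but since $L\in\Omega_{2,2}^\perp$ one has $v\in\Omega_{2,2}^\perp$ as well (e.g.\ $v=e_1+f_1+g_1+h_1$ is $G$-invariant), so $\pi_{2,2*}v$ is nonzero, lies in $M_{2,2}^\perp$, and the coupling to the exceptional classes $\overline n_i,m_j$ comes only through the overlattice generators $k_1,\dots,k_6,\mu_1,\mu_2$ of $H^2(\tilde Y,\mathbb Z)$, not through $\pi_{2,2*}$ itself.
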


\subsection[K3 surfaces that are intermediate quotient for the symplectic action of $(\mathbb Z/2\mathbb Z)^2$]{Projective families of K3 surfaces that are intermediate quotient for the symplectic action of $(\mathbb Z/2\mathbb Z)^2$}\label{proj_intermediate}

The general projective surface $\tilde Z$ that is the resolution of singularities of $X/\langle\iota\rangle$, where $\iota$ is one of the involutions in $(\mathbb Z/2\mathbb Z)^2$, is polarized with the lattice $\Gamma_{2,2}\oplus\langle 2x \rangle,\ x>0$ or one of its cyclic overlattices.

The lattice $\Gamma_{2,2}$ is described in Definition \ref{def:Gamma22}; since its orthogonal complement in $H^2(\tilde Z, \mathbb Z)$ is $\pi_{\iota*}H^2(X,\mathbb Z)$, one may expect that any cyclic overlattice of $\Gamma_{2,2}\oplus\langle 2x\rangle$ be realized with $\Gamma_{2,2}^{\perp}=\pi_{\iota*}L$ (divided by an appropriate integer if not already primitive) for any $\iota$, and $L$ chosen among those in Table \ref{esempiLKlein}: however, consider the following Remark \ref{rem:different_action}.

\begin{remark}\label{rem:different_action}
The quotient maps induced by the three involutions $\tau,\varphi,\rho$ act on the classes $(2,0,108)$ and $(2,1,36)$ for $\approx_{\Omega_{2,2}}$ by killing some elements, and not others: for instance, consider $w/2$ and $(e_1-f_1)/2$, both belonging to the class $(2,1,36)$; $\pi_{\tau*}w/2=0=\pi_{\rho*}w/2$, while $\pi_{\varphi*}w/2\neq 0$; similarly, $\pi_{\varphi*}(e_1-f_1)/2=0=\pi_{\rho*}(e_1-f_1)/2$, while $\pi_{\tau*}(e_1-f_1)/2\neq 0$. We remark that with our choice of representatives $x_{(k,g,n)}$, $L_{2,0}^{(1)}$ and $L_{2,2}^{(b)}$ glue to elements on which $\pi_{\tau*}$ and $\pi_{\rho*}$ act the same, while $\pi_{\varphi*}$ acts differently. Therefore, to the end of describing the correspondence of projective families of $X,\tilde Z$ and $\tilde Y$, it is necessary and sufficient that we consider the maps in diagram \eqref{diagrammaquozienti}.
\end{remark}

\begin{remark}
The lattices $S_\tau, S_\varphi$ associated to the classes $\pi_{\tau*}L_{2,0}^{(1)}$ and  $\pi_{\varphi*}L_{2,0}^{(1)}$ are not isomorphic, and there is no other $L\neq L_{2,0}^{(1)}$ in Table \ref{esempiLKlein} such that $\pi_{\tau*}L$ and  $\pi_{\varphi*}L$ realize $S_\varphi$ and $S_\tau$ respectively. The same holds with $L_{2,2}^{(b)}$ in place of $L_{2,0}^{(1)}$.
\end{remark}

\begin{theorem}\label{NSpossibiliZKlein}
Let $\tilde Z$ be a K3 surface such that $rk(NS(\tilde Z))=13$; suppose $NS(\tilde Z)$ admits a primitive embedding of both $\Gamma_{2,2}$ and a class of positive square $2x$ that generates $\Gamma_{2,2}^{\perp_{NS(\tilde Z)}}$. Then $NS(\tilde Z)$ is one of the following: 
\begin{enumerate}
\item for any $x$, $\Gamma_{2,2}\oplus\langle2x\rangle$;
\item for any $x=_4 0$ there are two non-isomorphic overlattices of index 2: $(\Gamma_{2,2}\oplus\langle2x\rangle)'^{(i)}$, $i=1,2$;
\item for $x=_4 2$, $(\Gamma_{2,2}\oplus\langle2x\rangle)'$, uniquely determined by $x$ and the index of the overlattice;
\item for $x=_8 4$, $(\Gamma_{2,2}\oplus\langle2x\rangle)^\star$,  uniquely determined by $x$ and the index of the overlattice.
\end{enumerate}
\end{theorem}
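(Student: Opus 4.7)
The plan is to follow exactly the blueprint already used for Theorems \ref{NSpossibiliKlein} and \ref{NSpossibiliY}. By \cite[Prop. 1.4.1.a]{Nikulin1}, the overlattices of index $k$ of $\Gamma_{2,2}\oplus\langle 2x\rangle$ which keep both summands primitively embedded correspond bijectively to isotropic elements of order $k$ in the discriminant group of the form $(v+L)/k$, with $L$ a generator of $\langle 2x\rangle$ and $v\in A_{\Gamma_{2,2}}$ chosen up to the action of $O(\Gamma_{2,2})$ on $A_{\Gamma_{2,2}}$. So the statement reduces to: (a) describing the orbits of $O(\Gamma_{2,2})$ on $A_{\Gamma_{2,2}}$, (b) for each orbit of order 2 or 4, identifying the congruences on $x$ which make $(v+L)/k$ isotropic, and (c) comparing discriminant forms of the resulting overlattices to decide which are isometric as abstract lattices.

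First I would determine $A_{\Gamma_{2,2}}$ and its form. Since $\Gamma_{2,2}$ is built by gluing the Nikulin lattice $N$ and $\pi_{\iota*}\Omega_{2,2}\simeq D_4(2)$ along the classes $x_1,x_2$ of Definition \ref{def:Gammatau}, the discriminant group is obtained from $A_N\oplus A_{D_4(2)}$ by quotienting by the isotropic subgroup generated by the images of $x_1,x_2$, and the form is inherited. I would then classify orbits by the triple $(k,g,n)$ of Definition \ref{reldeq}, exactly as in Propositions \ref{classiOmega22}--\ref{classiM22}; as observed in the proof of Lemma \ref{prop:Gamma_22phi}, on the building blocks $N$ and $D_4(2)$ the orbits of the isometry group on the discriminant are already distinguished by order and square, so with some bookkeeping the same holds on the quotient $A_{\Gamma_{2,2}}$.

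Next, for an element of order 2 with square $g\in\mathbb Q/2\mathbb Z$, the isotropy condition $g+2x/4\equiv 0\pmod{2\mathbb Z}$ forces $g=0$ with $x\equiv 0\pmod 2$ (giving case (2) when $x\equiv_4 0$ and case (3) when combined with $g=1$ and $x\equiv_4 2$). More precisely, glue elements of square $0$ yield overlattices only when $x\equiv_4 0$; those of square $1$ only when $x\equiv_4 2$. For order 4 the condition becomes $g+2x/16\equiv 0\pmod{2\mathbb Z}$, which has solutions only for $x\equiv_8 4$, producing case (4). When several orbits satisfy the same congruence, I would compute $q_{NS(\tilde Z)}$ on each resulting overlattice and sort them into isomorphism classes using \cite[Prop. 1.4.1.a]{Nikulin1}: the case $x\equiv_4 0$ gives two distinct forms (hence the $'^{(1)},'^{(2)}$ of point (2)), the case $x\equiv_4 2$ gives a single form (point (3)), and the order-4 glue gives a single form (point (4)).

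The main obstacle is step (a): unlike for $\Omega_{2,2}$ and $M_{2,2}$, no explicit orbit table for $A_{\Gamma_{2,2}}$ is recorded in the paper, so I will have to enumerate the relevant orbits by hand and, crucially, exhibit in $O(\Gamma_{2,2})$ the isometries realizing the coincidences between glue vectors with equal invariants (for instance the analogues of $x_{(2,0,108)}$ vs.\ $x_{(2,0,3)}$, only one of which survives here). Once this orbit analysis is in place, steps (b) and (c) are mechanical congruence and discriminant-form computations entirely parallel to the proofs of Theorems \ref{NSpossibiliKlein} and \ref{NSpossibiliY}.
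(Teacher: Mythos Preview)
Your proposal is correct and is essentially the paper's own argument: the proof there does exactly steps (a)--(c), and in particular supplies the orbit table for $\approx_{\Gamma_{2,2}}$ that you flag as the main obstacle (eight classes, with representatives written in both $\pi_{\tau*}H^2(X,\mathbb Z)$ and $\pi_{\varphi*}H^2(X,\mathbb Z)$). The only cosmetic difference is that the paper exhibits the representatives via the two explicit embeddings $\Gamma_{2,2}\hookrightarrow H^2(\tilde Z_\tau,\mathbb Z)$ and $\Gamma_{2,2}\hookrightarrow H^2(\tilde Z_\varphi,\mathbb Z)$ rather than by working abstractly inside $A_N\oplus A_{D_4(2)}$, a choice made with the subsequent correspondence Theorem \ref{correspondence:GammaKlein} in view.
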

\begin{proof}
An element of the form $(E+\alpha)/2$, with $E^2=2x$ and $\alpha\in\Gamma_{2,2}$, has integer, even self-intersection only if $x$ is even, and an element of the form $(E+\alpha)/4$ only if $x=_8 4$. \\
The equivalence classes for $\approx_{\Gamma_{2,2}}$ are presented in the following table:\begin{longtable}{|c|c|c|}
\hline
class $(k,g,n)$ & repr. $x_{(k,g,n)}\in\pi_{\tau*}H^2(X,\mathbb Z)$ & repr. $y_{(k,g,n)}\in\pi_{\varphi*}H^2(X,\mathbb Z)$  \TBstrut\\
\hline
$(2,0,3)$ & $\frac{\hat f_1-\hat e_1}{2}$ & $\frac{n_2+n_3+n_4+n_8}{2}$\TBstrut\\ 
\hline
$(2,0,8)$ & $\frac{n_3+n_5+n_6+n_8}{2}$ & $\frac{n_3+n_4+n_5+n_7}{2}$  \TBstrut\\
\hline
$(2,0,12)$ & $\frac{\hat f_1-\hat e_1+n_4+n_6+\hat c_1-\hat a_1}{2}$ & $\frac{\tilde b_1-\tilde a_1+n_6+n_7}{2}$ \TBstrut\\ 
\hline
$(2,1,4)$ & $\frac{\hat f_1-\hat e_1+n_4+n_6}{2}$ & $\frac{n_6+n_7}{2}$ \TBstrut\\ 
\hline
$(2,1,12)$  &$\frac{\hat c_1-\hat a_1}{2}$ &$\frac{\tilde b_1-\tilde a_1}{2}$\TBstrut\\ 
\hline
$(2,1,24)$ & $\frac{n_3+n_6+n_5+n_8+\hat c_1-\hat a_1}{2}$ & $\frac{\tilde b_1-\tilde a_1+n_3+n_4+n_5+n_7}{2}$ \TBstrut\\ 
\hline
$(4,1/2,96)$ & $\frac{3(\hat f_1-\hat e_1)}{4}+\frac{n_6+n_8}{2}$ &$\frac{n_2+n_3+3n_4+3n_8}{4}+\frac{x'_1}{2}$\TBstrut\\ 
\hline
$(4,3/2,96)$ & $\frac{\hat f_1-\hat e_1}{4}+\frac{n_3+n_4+n_5+n_6}{2}$ &$\frac{n_2+3n_3+n_4+3n_8}{4}+\frac{x'_1+n_5+n_6}{2}$\TBstrut\\ 
\hline
\end{longtable}
The corresponding overlattice of $\Gamma_{2,2}\oplus\langle 2x\rangle$ can be realized having fixed either the embedding $\Gamma_{2,2}\simeq\Gamma_\tau\subset H^2(\tilde Z_{\tau},\mathbb Z)$ as in Definition \ref{def:Gammatau}, or $\Gamma_{2,2}\simeq\Gamma_\varphi\subset H^2(\tilde Z_{\varphi},\mathbb Z)$ as in Lemma \ref{prop:Gamma_22phi} (these are equivalent up to isometries of $\Lambda_{\mathrm{K3}}$); then, proceed as in the proof of Theorem \ref{NSpossibiliY}.
\end{proof}

\begin{theorem}\label{correspondence:GammaKlein}
We give the correspondence between families of projective K3 surfaces $X$ with a symplectic action of $(\mathbb Z/2\mathbb Z)^2$, $\tilde Z$ the resolution of the singularities of the quotient $X/\iota$, with $\iota$ any of the generators of $(\mathbb Z/2\mathbb Z)^2$, and $\tilde Y$ as in Theorem \ref{relations22}. The notation is explained in Remark \ref{overlatticenotation}, and for $NS(\tilde Z)$ the class generating $\langle nd\rangle$ is indicated in curly brackets.
\end{theorem}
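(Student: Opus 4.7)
The plan is to proceed family by family, following the same template as in the proof of Theorem \ref{relations22}. For each projective family of $X$ classified in Theorem \ref{NSpossibiliKlein}, I would fix the representative ample class $L \in \Omega_{2,2}^{\perp_{\Lambda_{\mathrm{K3}}}}$ given in Table \ref{esempiLKlein}, compute its image $\pi_{\iota*}L$ for $\iota \in \{\tau,\varphi\}$ using the explicit push-forward formulas of Propositions \ref{pitau} and \ref{piphi}, and then extract the primitive class $\overline{L}_\iota \in H^2(\tilde Z_\iota, \mathbb Z)$ of which $\pi_{\iota*}L$ is a positive integer multiple. Since $\Gamma_{2,2}^{\perp_{H^2(\tilde Z_\iota,\mathbb Z)}} = \pi_{\iota*}H^2(X,\mathbb Z)$ by construction, this $\overline L_\iota$ generates $\Gamma_{2,2}^{\perp_{NS(\tilde Z_\iota)}} = \langle 2x \rangle$ and its square determines $x$. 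Then $NS(\tilde Z_\iota)$ must be one of the lattices listed in Theorem \ref{NSpossibiliZKlein}; to identify which, one computes the unique isotropic element of $A_{\Gamma_{2,2}\oplus\langle 2x\rangle}$ glueing $\overline L_\iota$ to a class in $\Gamma_{2,2}$, and matches it with the representatives $x_{(k,g,n)}, y_{(k,g,n)}$ tabulated in the proof of Theorem \ref{NSpossibiliZKlein}.

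Concretely, applying $\pi_{\iota*}$ to the classes $L_0(d), L_{2,0}^{(i)}(h), L_{2,2}^{(a/b)}(h), L_{4,\pm 4}(h)$ from Table \ref{esempiLKlein} via Propositions \ref{pitau}--\ref{piphi} produces classes of the form $\pi_{\iota*}L = n\overline L_\iota$ for $n \in \{1,2,4\}$; the self-intersection $\overline L_\iota^2 = (L^2 + \iota^*L\cdot L)/n^2 = 2L^2/n^2$ (since $L$ is $\iota$-invariant up to terms in $\Omega_{2,2}$ that get absorbed into $\Gamma_{2,2}$) gives the degree $2x$. Finally, verifying which equivalence class in $A_{\Gamma_{2,2}}$ the projection of $\overline L_\iota/2$ or $\overline L_\iota/4$ onto $\Gamma_{2,2}$ lands in determines whether the resulting lattice is $\Gamma_{2,2}\oplus\langle 2x\rangle$ or one of its non-trivial overlattices, completing that row of the table.

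The crucial technical point, and the main obstacle, is the asymmetry between $\tau$ and $\varphi$ emphasized in Remark \ref{rem:different_action}: the isometry classes $(2,0,108)$ and $(2,1,36)$ of $A_{\Omega_{2,2}}$ are preserved by $O(\Omega_{2,2})$ but their individual elements transform differently under $\pi_{\tau*}$ and $\pi_{\varphi*}$. In particular, when $L$ is $L_{2,0}^{(1)}(h)$ or $L_{2,2}^{(b)}(h)$, the representative $x_{(2,0,108)}$ (respectively $x_{(2,1,36)}$) glued to $L$ is killed by $\pi_{\varphi*}$ but not by $\pi_{\tau*}$ (or vice-versa, up to the symmetric treatment). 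This forces $\overline L_\tau$ and $\overline L_\varphi$ to have different divisibilities inside $H^2(\tilde Z_\iota, \mathbb Z)$, hence different squares and different glueings to $\Gamma_{2,2}$: the two intermediate quotients $\tilde Z_\tau$ and $\tilde Z_\varphi$ then lie in genuinely distinct projective families. This is the source of the asymmetric rows in the correspondence table.

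A further phenomenon I would need to track is the family collision in degree $2(8h+4)$: two distinct $NS(X)$ (namely $\Omega_{2,2}\oplus\langle 2d\rangle$ and the index-two overlattice $(\Omega_{2,2}\oplus\langle 2d\rangle)'$ for appropriate $d$) can produce the same $NS(\tilde Z)$ because $\pi_{\iota*}$ may cancel the difference in divisibility. I would verify this by comparing the discriminant forms of the resulting lattices, invoking \cite[Prop. 1.14.1]{Nikulin1} to check uniqueness of the primitive embedding in $\Lambda_{\mathrm{K3}}$ at each step. Once all rows of the table have been filled by this case-by-case push-forward, the correspondence is complete; the bijection with $\tilde Y$-families already established in Theorem \ref{relations22} serves as a consistency check via the composition $\widehat{\pi_\iota}\circ\pi_{\bar\iota} = \pi_{2,2}$.
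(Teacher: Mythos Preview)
Your proposal is correct and follows essentially the same computational approach as the paper: push forward the representative ample classes of Table \ref{esempiLKlein} via $\pi_{\tau*}$ and $\pi_{\varphi*}$, extract the primitive class, and match the resulting gluing against the $\approx_{\Gamma_{2,2}}$-classes tabulated in the proof of Theorem \ref{NSpossibiliZKlein}. One small correction: the two $X$-families that collide on the same $\tilde Z$-family (for $d\equiv 4\pmod 8$) are $(\Omega_{2,2}\oplus\langle 2d\rangle)'^{(2)}$ and $(\Omega_{2,2}\oplus\langle 2d\rangle)^\star$, not the trivial extension and an index-two overlattice as you guessed; and since $L\in\Omega_{2,2}^{\perp}$ is strictly $\iota$-invariant, the parenthetical ``up to terms in $\Omega_{2,2}$'' in your push-pull computation is unnecessary.
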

\begin{center}
\footnotesize
\begin{tabular}{|c|c|c c|c|}
\hline
\multicolumn{2}{|c|}{$NS(X)$\Tstrut}    		    &\multicolumn{2}{c|} {$NS(\tilde Z)$\Tstrut} & {$NS(\tilde Y)$\Tstrut}\\ [4pt]
\hline
$d=_2 1$\Tstrut	& $\Omega_{2,2}\oplus\langle 2d\rangle$\Tstrut & $(\Gamma_{2,2}\oplus\langle 4d\rangle)'$ &$\{\pi_{\iota*}L_0\}$\Tstrut & $(M_{2,2}\oplus\langle 2d\rangle)'$ \Tstrut\\ [4pt]
\hline
\multirow{5}{*}{$d=_4 2$\Tstrut} & $\Omega_{2,2}\oplus\langle 2d\rangle$\Tstrut & $(\Gamma_{2,2}\oplus\langle 4d\rangle)'^{(1)}$ &$\{\pi_{\iota*}L_0\}$\Tstrut & $(M_{2,2}\oplus\langle 8d\rangle)'$ \\ [3pt]
\cdashline{2-5}
\ & \multirow{2}{*}{$(\Omega_{2,2}\oplus\langle 2d\rangle)'$\TTstrut} 
&$(\Gamma_{2,2}\oplus\langle 4d\rangle)'^{(2)}$ & $ \big\{\pi_{\iota*}L^{(a)}_{2,2}\big\}$\Tstrut  &\multirow{2}{*}{$(M_{2,2}\oplus\langle 2d\rangle)'$\TTstrut} \\
\ &\ &$\Gamma_{2,2}\oplus\langle d\rangle$ & $ \big\{{\pi_{\tau*}L^{(b)}_{2,2}}/{2}\big\} $\Tstrut &\  \\
\ & \ &$(\Gamma_{2,2}\oplus\langle 4d\rangle)^\star$ & $ \big\{\pi_{\varphi*}L^{(b)}_{2,2}\big\} $\Tstrut &\  \\[4pt]
\hline
\multirow{6}{*}{$d=_8 0$\Tstrut} & $\Omega_{2,2}\oplus\langle 2d\rangle$\Tstrut &  $(\Gamma_{2,2}\oplus\langle 4d\rangle)'^{(1)}$ &$\{\pi_{\iota*}L_0\}$\Tstrut & $(M_{2,2}\oplus\langle 8d\rangle)'$ \\ [3pt]
\cdashline{2-5} 
\ & \multirow{2}{*}{$(\Omega_{2,2}\oplus\langle 2d\rangle)'^{(1)}$\Tstrut} &$\Gamma_{2,2}\oplus\langle d\rangle$ & $ \big\{{\pi_{\varphi*}L^{(1)}_{2,0}}/{2}\big\} $\Tstrut & \multirow{2}{*}{$(M_{2,2}\oplus\langle 2d\rangle)'$\Tstrut} \\
\ & \ &$(\Gamma_{2,2}\oplus\langle 4d\rangle)'^{(2)}$ & $ \big\{\pi_{\tau*}L^{(1)}_{2,0}\big\} $\Tstrut &\ \\[3pt]
\cdashline{2-5}
\ & $(\Omega_{2,2}\oplus\langle 2d\rangle)'^{(2)}$\Tstrut &$(\Gamma_{2,2}\oplus\langle d\rangle)' $ & $ \big\{{\pi_{\iota*}L^{(2)}_{2,0}}/{2}\big\} $\Tstrut &$M_{2,2}\oplus\langle d/2\rangle $ \\[4pt]
\hline
\multirow{7}{*}{$d=_8 4$\Tstrut} & $\Omega_{2,2}\oplus\langle 2d\rangle$\Tstrut &  $(\Gamma_{2,2}\oplus\langle 4d\rangle)'^{(1)}$ &$\{\pi_{\iota*}L_0\}$\Tstrut &$(M_{2,2}\oplus\langle 8d\rangle)'$ \\ [3pt]
\cdashline{2-5} 
\ & \multirow{2}{*}{$(\Omega_{2,2}\oplus\langle 2d\rangle)'^{(1)}$\Tstrut} &$\Gamma_{2,2}\oplus\langle d\rangle$ & $ \big\{{\pi_{\varphi*}L^{(1)}_{2,0}}/{2}\big\} $\Tstrut &\multirow{2}{*}{$(M_{2,2}\oplus\langle 2d\rangle)'^{(1)}$\Tstrut}  \\
\ & \ &$(\Gamma_{2,2}\oplus\langle 4d\rangle)'^{(2)}$ & $ \big\{\pi_{\tau*}L^{(1)}_{2,0}\big\} $\Tstrut &\  \\[3pt]
\cdashline{2-5}
\ & $(\Omega_{2,2}\oplus\langle 2d\rangle)'^{(2)}$\Tstrut &\multirow{2}{*}{$(\Gamma_{2,2}\oplus\langle d\rangle)' $} & $ \big\{{\pi_{\iota*}L^{(2)}_{2,0}}/{2}\big\} $\Tstrut &$(M_{2,2}\oplus\langle 2d\rangle)'^{(2)} $\Tstrut  \\
\ & $(\Omega_{2,2}\oplus\langle 2d\rangle)^\star$\Tstrut & \ & $ \big\{\frac{\pi_{\iota*}L_{4,-4}}{2},\frac{\pi_{\iota*}L_{4,4}}{2}\big\}$\Tstrut & $M_{2,2}\oplus\langle d/2\rangle$ \\ [4pt]
\hline
\end{tabular}\normalsize
\end{center}

\section{Projective models}\label{sec:projmod}

Given a nef and big divisor $L$ on $X$, there is a natural map $\phi_{|L|}: X\rightarrow\mathbb P(H^0(X,L)^*)\simeq \mathbb P^n$,
with $n=L^2/2+1$. Any automorphism $\sigma$ of $X$ that preserves $L$ induces an action on $\mathbb P(H^0(X,L)^*)$: in particular, if $\sigma$ is finite of order $m$, we can split $H^0(X,L)$ in eigenspaces corresponding to the $m$-roots of unity. 

\begin{remark}\label{rem:bigger_action} In our case, the action of $\tau$ on $H^0(X,L)$ could actually have order $2k$ for some integer $k>1$, being such that
\[\tau^2:(x_0,\dots,x_n)\mapsto\xi_k(x_0,\dots,x_n)\]
for $\xi_k$ a root of unity (and similarly $\varphi$). However, if instead of $\tau$ and $\varphi$ we consider the action of $\tau^k$ and $\varphi^h$ on $H^0(X,L)$, we don't have any control on the order of their composition $\rho$ -- we only know that it divides $kh$. Therefore, the group $G$ acting on $H^0(X,L)$ may be bigger than $(\mathbb Z/2\mathbb Z)^2$; if so, we can only conclude that $G$ is dihedral, as $\tau^k$ and $\varphi^h$ are involutions.
\end{remark}

\subsection{Eigenspaces of $\tau,\varphi$}\label{eigenspaces:tau,phi}

Let $X$ be a K3 surface with a symplectic action of $(\mathbb Z/2\mathbb Z)^2=\langle\tau,\varphi\rangle$, let $L$ be the ample class that generates $\Omega_{2,2}^{\perp_{NS(X)}}$. If $\tilde Z_\tau,\tilde Z_\varphi$ are the minimal resolution of $X/\tau,X/\varphi$ respectively, we have
\begin{align*}H^0(X,L)
&=\pi_\tau^*H^0(\tilde Z_\tau, E_1)\oplus \pi_\tau^*H^0(\tilde Z_\tau, E_2)\\ &=\pi_\varphi^*H^0(\tilde Z_\varphi, F_1)\oplus \pi_\varphi^*H^0(\tilde Z_\varphi, F_2);\end{align*}
the nef divisors $E_1, E_2\in NS(\tilde Z_\tau),F_1, F_2\in NS(\tilde Z_\varphi)$ that satisfy these equalities for the choices of ample classes introduced in Table \ref{esempiLKlein} are defined in the following tables, with the exceptional curves numbered as in Sections \ref{sec:pitau}, \ref{sec:piphi}; for the general symplectic involution on a K3 surface, this is done in \cite[Prop. 2.7]{VGS}.

\footnotesize
\begin{tabular}{|m{0.08\textwidth}<{\centering}|m{0.41\textwidth}<{\centering}|m{0.4364\textwidth}<{\centering}|}
\hline
$L_0(d) \TBstrut$ &{ $d=_2 0$ \TBstrut }                      &{ $d=_2 1$  \TBstrut} \\  \hline
$E_1\Tstrut $ & $\pi_{\tau*}L_0/2-(n_3+n_5+n_6+n_8)/2 \Tstrut$ & $\pi_{\tau*}L_0/2-(n_1+n_8)/2$\Tstrut  \\
$E_2\TBstrut $ & $\pi_{\tau*}L_0/2-(n_1+n_2+n_4+n_7)/2\TBstrut$  & $\pi_{\tau*}L_0/2-(n_2+n_3+n_4+n_5+n_6+n_7)/2$\TBstrut  \\
 \hline
$F_1\Tstrut $ & $\pi_{\varphi*}L_0/2-(n_1+n_2+n_6+n_8)/2 \Tstrut$ & $\pi_{\varphi*}L_0/2-(n_6+n_7)/2$\Tstrut  \\
$F_2\TBstrut $ & $\pi_{\varphi*}L_0/2-(n_3+n_4+n_5+n_7)/2\TBstrut$  & $\pi_{\varphi*}L_0/2-(n_1+n_2+n_3+n_4+n_5+n_8)/2$\TBstrut  \\
\hline
\end{tabular}

\begin{tabular}{|m{0.08\textwidth}<{\centering}|m{0.43\textwidth}<{\centering}|}
\hline
$L_{2,0}^{(1)}(h) \TBstrut$ &{ any $h$ \TBstrut }                       \\  \hline
$E_1\Tstrut $ & $\pi_{\tau*}L_{2,0}^{(1)}/2-(n_1+n_4+n_6+n_8)/2\Tstrut$ \\
$E_2\TBstrut $ & $\pi_{\tau*}L_{2,0}^{(1)}/2-(n_2+n_3+n_5+n_7)/2\TBstrut$ \\
 \hline
$F_1\Tstrut $ & $\pi_{\varphi*}L_{2,0}^{(1)}/2\Tstrut$ \\
$F_2\TBstrut $ & $\pi_{\varphi*}L_{2,0}^{(1)}/2-\sum_{i=1}^8 n_i/2\TBstrut$\\ \hline
\end{tabular}\  
\begin{tabular}{|m{0.08\textwidth}<{\centering}|m{0.3\textwidth}<{\centering}|}
\hline
$L^{(2)}_{2,0}(h) \TBstrut$ &{ any $h$ \TBstrut }                       \\  \hline
$E_1\Tstrut $ & $\pi_{\tau*}L^{(2)}_{2,0}/2 \Tstrut$ \\
$E_2\TBstrut $ & $\pi_{\tau*}L^{(2)}_{2,0}/2-\sum_{i=1}^8 n_i/2\TBstrut$ \\
 \hline
$F_1\Tstrut $ & $\pi_{\varphi*}L_{2,0}^{(2)}/2\Tstrut$ \\
$F_2\TBstrut $ & $\pi_{\varphi*}L_{2,0}^{(2)}/2-\sum_{i=1}^8 n_i/2\TBstrut$\\ \hline
\end{tabular}

\begin{tabular}{|m{0.08\textwidth}<{\centering}|m{0.43\textwidth}<{\centering}|}
\hline
$L_{2,2}^{(a)}(h) \TBstrut$ &{ any $h$ \TBstrut }                       \\  \hline
$E_1\Tstrut $ & $\pi_{\tau*}L_{2,2}^{(a)}/2-(n_1+n_4+n_6+n_8)/2\Tstrut$ \\
$E_2\TBstrut $ & $\pi_{\tau*}L_{2,2}^{(a)}/2-(n_2+n_3+n_5+n_7)/2\TBstrut$ \\
 \hline
$F_1\Tstrut $ & $\pi_{\tau*}L_{2,2}^{(a)}/2-(n_1+n_5+n_6+n_7)/2\Tstrut$ \\
$F_2\TBstrut $ & $\pi_{\tau*}L_{2,2}^{(a)}/2-(n_2+n_3+n_4+n_8)/2\TBstrut$ \\ \hline
\end{tabular}\  \begin{tabular}{|m{0.09
\textwidth}<{\centering}|m{0.29\textwidth}<{\centering}|}
\hline
$L_{4,\pm 4}(h) \TBstrut$ &{ any $h$ \TBstrut }                       \\  \hline
$E_1\Tstrut $ & $\pi_{\tau*}L_{4,\pm 4}/2\Tstrut$ \\
$E_2\TBstrut $ & $\pi_{\tau*}L_{4,\pm 4}/2-\sum_{i=1}^8 n_i/2\TBstrut$ \\
 \hline
$F_1\Tstrut $ & $\pi_{\varphi*}L_{4,\pm 4}/2\Tstrut$ \\
$F_2\TBstrut $ & $\pi_{\varphi*}L_{4,\pm 4}/2-\sum_{i=1}^8 n_i/2\TBstrut$ \\ \hline
\end{tabular}
\\
\begin{center}\begin{tabular}{|m{0.08\textwidth}<{\centering}|m{0.4\textwidth}<{\centering}|}
\hline
$L^{(b)}_{2,2}(h) \TBstrut$ &{ any $h$ \TBstrut }                       \\  \hline
$E_1\Tstrut $ & $\pi_{\tau*}L^{(b)}_{2,2}/2 \Tstrut$ \\
$E_2\TBstrut $ & $\pi_{\tau*}L^{(b)}_{2,2}/2-\sum_{i=1}^8 n_i/2\TBstrut$ \\
 \hline
$F_1\Tstrut $ & $\pi_{\varphi*}L_{2,2}^{(b)}/2-(n_1+n_5+n_6+n_7)/2\Tstrut$ \\
$F_2\TBstrut $ & $\pi_{\varphi*}L_{2,2}^{(b)}/2-(n_2+n_3+n_4+n_8)/2\TBstrut$ \\ \hline
\end{tabular}
\end{center}
\normalsize

\subsection{Eigenspaces  and classes in $NS(\tilde Y)$}\label{sec:eigenspacesKlein}

To determine the action of $(\mathbb Z/2\mathbb Z)^2$, we then have to consider how the residual involutions $\hat\varphi,\hat \tau$ on $\tilde Z_\tau,\tilde Z_\varphi$ act on the divisors $E_i,F_j$ defined in Section \ref{eigenspaces:tau,phi}. In particular, recall from Propositions \ref{pirho}, \ref{pihattau} the action of the residual involutions on the exceptional curves:
\begin{align*}
\hat\varphi=&(n_1,n_8)(n_2,n_5)(n_3,n_7)(n_4,n_6)\\
\hat\tau=&(n_1,n_5)(n_2,n_4)(n_3,n_8)(n_6,n_7).
\end{align*}

\begin{theorem}\label{prop:autospaziKlein}
Let $X$ be a K3 surface that admits a symplectic action of $(\mathbb Z/2\mathbb Z)^2$, and let $L$ be an ample divisor on $X$ invariant for this action. We distinguish two cases: 
\begin{enumerate}
\item Let $L^2=2d=_4 0$, $NS(X)=\Omega_{2,2}\oplus\mathbb ZL$: then the action of $(\mathbb Z/2\mathbb Z)^2$ on $\mathbb P(H^0(X,L)^*)$ is induced by an action of $\mathcal D_4$, the dihedral group of order 8, on $H^0(X,L)$ as follows. 
\[\mathcal D_4=\langle a,b\mid a^2=b^2=1, (ab)^4=1 \rangle\]
\begin{align*}a:& (x_0:\dots :x_{d/2+1}:x_{d/2+2}:\dots : x_{d+2})\mapsto  (x_0:\dots :x_{d/2+1}:-x_{d/2+2}:\dots :-x_{d+2})\\
b:& (x_0:\dots :x_{d/2+1}:x_{d/2+2}:\dots :x_{d+2})\mapsto  (x_{d/2+2}:\dots :x_{d+2}:x_{0}:\dots :x_{d/2+1}).\end{align*}
\item For any other deformation family, there exist divisors $D_1,\dots , D_4\in NS(\tilde Y)$ such that
\[H^0(X,L)=\pi_{2,2}^*H^0(\tilde Y, D_1)\oplus\pi_{2,2}^*H^0(\tilde Y, D_2)\oplus\pi_{2,2}^*H^0(\tilde Y, D_3)\oplus\pi_{2,2}^*H^0(\tilde Y, D_4)\]
and each $\pi_{2,2}^*H^0(\tilde Y, D_i)$ corresponds to one of the subspaces which are the intersection of eigenspaces for the action of the two generators of $(\mathbb Z/2\mathbb Z)^2$ on $H^0(X,L)$:
\[H^0(X,L)=V_{++}\oplus V_{+-} \oplus V_{-+}\oplus V_{--}.\]
\end{enumerate}
\end{theorem}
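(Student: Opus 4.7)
The plan is to leverage the eigenspace decomposition $H^0(X,L) = \pi_\tau^* H^0(\tilde Z_\tau, E_1) \oplus \pi_\tau^* H^0(\tilde Z_\tau, E_2)$ recalled in Section \ref{eigenspaces:tau,phi}, where $E_1, E_2$ are nef divisors on $\tilde Z_\tau$ with $E_i^2 = d-2$ (so that $h^0(\tilde Z_\tau, E_i) = d/2+1$ by Riemann--Roch, consistent with $h^0(X,L) = d+2$). Since the automorphism $\varphi$ on $X$ descends to the residual symplectic involution $\hat\varphi$ on $\tilde Z_\tau$, the action of $\varphi$ on $H^0(X,L)$ is governed by the action of $\hat\varphi^*$ on the pair $(E_1, E_2)$: the key dichotomy is whether $\hat\varphi$ swaps $E_1$ with $E_2$ or preserves them individually.

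For Case 1, take $L = L_0(d)$ with $d$ even and $NS(X) = \Omega_{2,2} \oplus \mathbb Z L$. Reading off the formulas in Section \ref{eigenspaces:tau,phi}, $E_1$ and $E_2$ differ only by the exceptional contributions $(n_3+n_5+n_6+n_8)/2$ and $(n_1+n_2+n_4+n_7)/2$. Applying the permutation $\hat\varphi = (n_1,n_8)(n_2,n_5)(n_3,n_7)(n_4,n_6)$ recorded in Section \ref{sec:eigenspacesKlein} sends $\{n_3,n_5,n_6,n_8\}$ to $\{n_7,n_2,n_4,n_1\}$, so $\hat\varphi^* E_1 = E_2$ and any lift $\tilde\varphi$ of $\varphi$ to $GL(H^0(X,L))$ must swap the two $\tau$-eigenspaces. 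In a basis adapted to the $\tau$-decomposition, after rescaling $\tilde\varphi$ so that $\tilde\varphi^2 = I$, it has matrix form $\bigl(\begin{smallmatrix} 0 & I \\ I & 0 \end{smallmatrix}\bigr)$, while $\tilde\tau = \mathrm{diag}(I,-I)$. The composition then satisfies $(\tilde\tau\tilde\varphi)^2 = -I$, so $\tilde\tau\tilde\varphi$ has order $4$ in $GL$ even though it projects to an involution of $\mathbb P(H^0(X,L)^*)$; together with $\tilde\tau$ it generates a copy of $\mathcal D_4$ that matches the presentation in the statement with $a = \tilde\tau$ and $b = \tilde\varphi$.

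For Case 2, I would verify by direct inspection that, for every ample class $L$ listed in Table \ref{esempiLKlein} other than $L_0$ with $d$ even (and for the $F_j$ on the $\pi_\varphi$-side), the permutation $\hat\varphi$ (resp.\ $\hat\tau$) preserves individually the exceptional subset appearing in each of $E_1, E_2$ (resp.\ $F_1, F_2$): this is immediate from the tables in Section \ref{eigenspaces:tau,phi} combined with the explicit permutations. Once $\hat\varphi^* E_i = E_i$ for both $i$, the involution $\hat\varphi$ acts on each $H^0(\tilde Z_\tau, E_i)$ and we decompose it into its $\pm 1$-eigenspaces, obtaining the fourfold decomposition $H^0(X,L) = V_{++}\oplus V_{+-}\oplus V_{-+}\oplus V_{--}$. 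To identify each $V_{\varepsilon_1\varepsilon_2}$ with $\pi_{2,2}^*H^0(\tilde Y, D_i)$ for a suitable nef class $D_i \in NS(\tilde Y)$, apply the eigenspace decomposition again, this time to the symplectic involution $\hat\varphi$ on $\tilde Z_\tau$, obtaining $H^0(\tilde Z_\tau, E_i) = \widehat{\pi_\varphi}^* H^0(\tilde Y, D_i^+) \oplus \widehat{\pi_\varphi}^* H^0(\tilde Y, D_i^-)$; composing with $\pi_\tau^*$ and using $\widehat{\pi_\varphi}\circ\pi_\tau = \pi_{2,2}$ (up to resolutions) yields the claim.

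The main obstacle will be the combinatorial case-by-case verification of how $\hat\varphi$ permutes each pair $(E_1, E_2)$ across the polarizations in Table \ref{esempiLKlein}, but this is a finite list of checks of symmetric differences of four-element subsets of $\{n_1,\dots,n_8\}$. The conceptual point behind Case 1 is that the dihedral phenomenon is forced: whenever $\varphi$ interchanges the two $\tau$-eigenspaces, any choice of involutive lifts $(\tilde\tau, \tilde\varphi)$ produces a composition of order $4$ in $GL(H^0(X,L))$ even though $\tau\varphi$ has order $2$ as an automorphism of $X$; this is the concrete realization of the phenomenon anticipated in Remark \ref{rem:bigger_action}.
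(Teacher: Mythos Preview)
Your proposal is correct and follows essentially the same approach as the paper: both arguments hinge on the case-by-case check of whether $\hat\varphi^*$ swaps or fixes the pair $(E_1,E_2)$, with the swap occurring exactly for $L_0(d)$ with $d$ even, and then either iterate the eigenspace decomposition (Case~2) or observe that the anticommutation $\tilde\tau\tilde\varphi=-\tilde\varphi\tilde\tau$ forces $(\tilde\tau\tilde\varphi)^2=-I$ and hence a $\mathcal D_4$-action (Case~1). The paper is somewhat more explicit in that it carries along the undetermined roots of unity $\xi_k,\xi_m$ throughout the Case~1 computation (rather than normalizing the lifts at the outset as you do) and it exhibits the divisors $D_i$ concretely in tables together with their Euler characteristics, but the underlying strategy is the same.
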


\begin{proof}
For each projective family, consider for the associated ample class $L$ in Table \ref{esempiLKlein} the divisors $E_1,E_2$ defined in section \ref{eigenspaces:tau,phi}: the residual involution $\hat\varphi$ on $\tilde Z_\tau$ fixes $E_1,E_2$ in all cases, except for $L_0(d)$ and $d$ even, when they are exchanged. The same holds for the action of $\hat\tau$ on $\tilde Z_\varphi$ and the divisors $F_1,F_2$.\\
If $E_i,F_i$ are fixed by the residual involution, we can split $H^0(X,L)$ in four subspaces $V_{+,+}(L),V_{+,-}(L),V_{-,+}(L),V_{-,-}(L)$, each spanned by $\pi_{2,2}^*H^0(\tilde Y, D_i)$ for some nef divisors of the quotient surface: the proof follows the same argument of the cyclic case (see \cite[Prop. 6.1.1]{P}), using the divisors $D_i$ defined in the tables below. In Table \ref{Tab:dimK} the Euler characteristics of the $D_i$ are computed.
\begin{center}
\begin{tabular}{|m{0.06\textwidth}<{\centering}|m{0.48\textwidth}<{\centering}|m{0.4\textwidth}<{\centering}|}
\hline
$L_0(d) \TBstrut$ &{ $d=_4 1$ \TBstrut }                      &{ $d=_4 3$  \TBstrut} \\  \hline
$D_1\Tstrut $ & $\frac{\pi_{2,2*}L_0}{4}-\frac{\overline n_1+m_1+m_3+m_4+m_5}{2}$ \Tstrut & $\frac{\pi_{2,2*}L_0}{4}-\frac{\overline n_1+m_1+m_6}{2}$\Tstrut  \\
$D_2\Tstrut $ & $\frac{\pi_{2,2*}L_0}{4}-\frac{\overline n_1+m_2+m_6+m_7+m_8}{2}\Tstrut$  & $\frac{\pi_{2,2*}L_0}{4}-\frac{\overline n_1+m_2+m_3+m_4+m_5+m_7+m_8}{2}$\Tstrut  \\
$D_3\Tstrut$ & $\frac{\pi_{2,2*}L_0}{4}-\frac{\overline n_2+\overline n_3+\overline n_4+m_2+m_3+m_4+m_5+m_7+m_8}{2}\Tstrut$  & $\frac{\pi_{2,2*}L_0}{4}-\frac{\overline n_2+\overline n_3+\overline n_4+m_2+m_6+m_7+m_8}{2}\Tstrut$\\
$D_4\TBstrut $ & $\frac{\pi_{2,2*}L_0}{4}-\frac{\overline n_2+\overline n_3+\overline n_4+m_1+m_6}{2}$\TBstrut  & $\frac{\pi_{2,2*}L_0}{4}-\frac{\overline n_2+\overline n_3+\overline n_4+m_1+m_3+m_4+m_5}{2}$ \TBstrut\\ \hline
\end{tabular}
\begin{tabular}{|m{0.07\textwidth}<{\centering}|m{0.44\textwidth}<{\centering}|m{0.44\textwidth}<{\centering}|}
\hline
$L_{2,0}^{(1)}(h) \TBstrut$ &{ $h=_2 0$ \TBstrut }  &{ $h=_2 1$ \TBstrut }                    \\  \hline
$D_1\Tstrut $ & $\frac{\pi_{2,2*}L_{2,0}^{(1)}}{4}-\frac{\overline n_1+\overline n_4+m_1+m_7}{2}\Tstrut$ & $\frac{\pi_{2,2*}L_{2,0}^{(1)}}{4}-\frac{\overline n_1+\overline n_4+m_1+m_3+m_4+m_5+m_6+m_7}{2}$   \\
$D_2\Tstrut $ & $\frac{\pi_{2,2*}L_{2,0}^{(1)}}{4}-\frac{\overline n_1+\overline n_4+m_2+m_3+m_4+m_5+m_6+m_8}{2}\Tstrut$ & $\frac{\pi_{2,2*}L_{2,0}^{(1)}}{4}-\frac{\overline n_1+\overline n_4+m_2+m_8}{2}$   \\
$D_3\Tstrut$ & $\frac{\pi_{2,2*}L_{2,0}^{(1)}}{4}-\frac{\overline n_2+\overline n_3+m_2+m_8}{2}\Tstrut$ &$\frac{\pi_{2,2*}L_{2,0}^{(1)}}{4}-\frac{\overline n_2+\overline n_3+m_2+m_3+m_4+m_5+m_6+m_8}{2}\Tstrut$ \\
$D_4 \TBstrut$ & $\frac{\pi_{2,2*}L_{2,0}^{(1)}}{4}-\frac{\overline n_2+\overline n_3+m_1+m_3+m_4+m_5+m_6+m_7}{2}\TBstrut$ & $\frac{\pi_{2,2*}L_{2,0}^{(1)}}{4}-\frac{\overline n_2+\overline n_3+m_1+m_7}{2}$\TBstrut \\ \hline
\end{tabular}
\begin{tabular}{|m{0.08\textwidth}<{\centering}|m{0.35\textwidth}<{\centering}|m{0.25\textwidth}<{\centering}|}
\hline
$L_{2,0}^{(2)}(h) \TBstrut$ &{ $h=_2 0$ \TBstrut }                      &{ $h=_2 1$ \TBstrut }                       \\  \hline
$D_1\Tstrut $ & $\frac{\pi_{2,2*}L_{2,0}^{(2)}}{4}-\frac{m_3+m_4+m_5+m_6}{2}$\Tstrut & $\frac{\pi_{2,2*}L_{2,0}^{(2)}}{4}$\Tstrut  \\
$D_2\Tstrut $ & $\frac{\pi_{2,2*}L_{2,0}^{(2)}}{4}-\frac{m_1+m_2+m_7+m_8}{2}$ & $\frac{\pi_{2,2*}L_{2,0}^{(2)}}{4}-\mu_1$\Tstrut  \\
$D_3\Tstrut$ & $\frac{\pi_{2,2*}L_{2,0}^{(2)}}{4}-\frac{\overline n_1+\overline n_2+\overline n_3+\overline n_4+\sum_i m_i}{2}$  & $\frac{\pi_{2,2*}L_{2,0}^{(2)}}{4}-\mu_2$\Tstrut  \\
$D_4 \TBstrut$ & $\frac{\pi_{2,2*}L_{2,0}^{(2)}}{4}-\frac{\overline n_1+\overline n_2+\overline n_3+\overline n_4}{2}\TBstrut$  & $\frac{\pi_{2,2*}L_{2,0}^{(2)}}{4}-\mu_1-\mu_2$\Tstrut \\ \hline
\end{tabular}
\begin{tabular}{|m{0.08\textwidth}<{\centering}|m{0.434\textwidth}<{\centering}|m{0.434\textwidth}<{\centering}|}
\hline
$L^{(a)}_{2,2}(h) \TBstrut$ &{ $h=_2 0$ \TBstrut }                      &{ $h=_2 1$  \TBstrut} \\  \hline
$D_1\Tstrut $ & $\frac{\pi_{2,2*}L^{(a)}_{2,2}}{4}-\frac{\overline n_1+\overline n_4+m_1+m_4+m_5+m_7}{2} \Tstrut$ & $\frac{\pi_{2,2*}L^{(a)}_{2,2}}{4}-\frac{\overline n_1+\overline n_4+m_1+m_3+m_6+m_7}{2}$\Tstrut  \\
$D_2\Tstrut $ & $\frac{\pi_{2,2*}L^{(a)}_{2,2}}{4}-\frac{\overline n_1+\overline n_4+m_2+m_3+m_6+m_8}{2}\Tstrut$  & $\frac{\pi_{2,2*}L^{(a)}_{2,2}}{4}-\frac{\overline n_1+\overline n_4+m_2+m_4+m_5+m_8}{2}$\Tstrut  \\
$D_3\Tstrut$ & $\frac{\pi_{2,2*}L^{(a)}_{2,2}}{4}-\frac{\overline n_2+\overline n_3+m_2+m_4+m_5+m_8}{2}\Tstrut$  & $\frac{\pi_{2,2*}L^{(a)}_{2,2}}{4}-\frac{\overline n_2+\overline n_3+m_2+m_3+m_6+m_8}{2}\Tstrut$\\
$D_4 \TBstrut$ & $\frac{\pi_{2,2*}L^{(a)}_{2,2}}{4}-\frac{\overline n_2+\overline n_3+m_1+m_3+m_6+m_7}{2}\TBstrut$ & $\frac{\pi_{2,2*}L^{(a)}_{2,2}}{4}-\frac{\overline n_2+\overline n_3+m_1+m_4+m_5+m_7}{2}\TBstrut$ \\ \hline
\end{tabular}
\begin{tabular}{|m{0.08\textwidth}<{\centering}|m{0.434\textwidth}<{\centering}|m{0.434\textwidth}<{\centering}|}
\hline
$L^{(b)}_{2,2}(h) \TBstrut$ &{ $h=_2 0$ \TBstrut }                      &{ $h=_2 1$  \TBstrut} \\  \hline
$D_1\Tstrut $ & $\frac{\pi_{2,2*}L^{(b)}_{2,2}}{4}-\frac{m_3+m_6}{2} \Tstrut$ & $\frac{\pi_{2,2*}L^{(b)}_{2,2}}{4}-\frac{m_4+m_5}{2}$\Tstrut  \\
$D_2\Tstrut $ & $\frac{\pi_{2,2*}L^{(b)}_{2,2}}{4}-\frac{m_1+m_2+m_4+m_5+m_7+m_8}{2}\Tstrut$  & $\frac{\pi_{2,2*}L^{(b)}_{2,2}}{4}-\frac{m_1+m_2+m_3+m_6+m_7+m_8}{2}$\Tstrut  \\
$D_3\Tstrut$ & $\frac{\pi_{2,2*}L^{(b)}_{2,2}}{4}-\frac{\sum_j \overline n_j+m_1+m_2+m_3+m_6+m_7+m_8}{2}\Tstrut$  & $\frac{\pi_{2,2*}L^{(b)}_{2,2}}{4}-\frac{\sum_j \overline n_j+m_1+m_2+m_4+m_5+m_7+m_8}{2}\Tstrut$\\
$D_4 \TBstrut$ & $\frac{\pi_{2,2*}L^{(b)}_{2,2}}{4}-\frac{\sum_j \overline n_j+m_4+m_5}{2}\TBstrut$ & $\frac{\pi_{2,2*}L^{(b)}_{2,2}}{4}-\frac{\sum_j \overline n_j+m_3+m_6}{2}\TBstrut$ \\ \hline
\end{tabular}
\begin{tabular}{|m{0.1\textwidth}<{\centering}|m{0.4\textwidth}<{\centering}|}
\hline
$L_{4,\pm 4}(h) \TBstrut$ &{ any $h$ \TBstrut }                       \\  \hline
$D_1\Tstrut $ & $\frac{\pi_{2,2*}L_{\pm 4,4}}{4}$  \\
$D_2\Tstrut $ & $\frac{\pi_{2,2*}L_{\pm 4,4}}{4}-\mu_1\Tstrut$  \\
$D_3\Tstrut$ & $\frac{\pi_{2,2*}L_{\pm 4,4}}{4}-\mu_2\Tstrut$  \\
$D_4 \TBstrut$ & $\frac{\pi_{2,2*}L_{\pm 4,4}}{4}-\mu_1-\mu_2\TBstrut$  \\ \hline
\end{tabular}\end{center}

\begin{center}
\captionof{table}{Euler characteristics\label{Tab:dimK}}
\begin{tabular}{|m{0.1\textwidth}<{\centering}|m{0.03\textwidth}<{\centering}|m{0.07\textwidth}<{\centering}|m{0.15\textwidth}<{\centering}m{0.15\textwidth}<{\centering}m{0.15\textwidth}<{\centering}m{0.15\textwidth}<{\centering}|}
\cline{2-7}
\nocell{1} &no.\TBstrut & $L\TBstrut$ & $\chi(D_1)\TBstrut$  & $\chi(D_2)\TBstrut$ & $\chi(D_3)\TBstrut$ & $\chi(D_4)$\TBstrut \\ 
\hline
$d=_4 1\TBstrut$ &1\Tstrut & $L_0\TBstrut$   & $(d+3)/4\TBstrut$  & $(d+3)/4\TBstrut$ & $(d-1)/4\TBstrut$ & $(d+3)/4$\TBstrut \\
\hline
$d=_4 3\TBstrut$ &2\Tstrut & $L_0\TBstrut$   & $(d+5)/4\TBstrut$  & $(d+1)/4\TBstrut$ & $(d+1)/4\TBstrut$ & $(d+1)/4$\TBstrut \\
\hline
\multirow{2}{*}{$d=_4 2$\Tstrut} 
&3\Tstrut & $L^{(a)}_{2,2}\Tstrut$  & $(d+2)/4\Tstrut$  & $(d+2)/4\Tstrut$ & $(d+2)/4\Tstrut$ & $(d+2)/4$\Tstrut \\
&4\Tstrut & $L^{(b)}_{2,2}\TBstrut$  & $(d+6)/4\TBstrut$  & $(d+2)/4\TBstrut$ & $(d-2)/4\TBstrut$ & $(d+2)/4$\TBstrut \\
\hline
\multirow{2}{*}{\centering{$d=_8 0\Tstrut$}} 
&5\Tstrut & $L^{(1)}_{2,0}\Tstrut$   & $d/4+1\Tstrut$  & $d/4\Tstrut$ & $d/4+1\Tstrut$ & $d/4$\Tstrut \\
&6\Tstrut & $L^{(2)}_{2,0}\TBstrut$   & $d/4+2\TBstrut$  & $d/4\TBstrut$ & $d/4\TBstrut$ & $d/4$\TBstrut \\
\hline
\multirow{3}{1.5cm}{\centering{$d=_8 4$\TTstrut}} 
&7\Tstrut & $L^{(1)}_{2,0}\Tstrut$  & $d/4\Tstrut$  & $d/4+1\Tstrut$ & $d/4\Tstrut$ & $d/4+1$\Tstrut \\
&8\Tstrut & $L^{(2)}_{2,0}\Tstrut$  & $d/4+1\Tstrut$  & $d/4+1\Tstrut$ & $d/4-1\Tstrut$ & $d/4+1$\Tstrut \\
&9\TBstrut & $L_{\pm 4,4}\TBstrut$ & $d/4+2\TBstrut$  & $d/4\TBstrut$ & $d/4\TBstrut$ & $d/4$\TBstrut \\
\hline
\end{tabular}\end{center}

Consider now the projective family with ample class $L=L_0(d)$, $d$ even: then we cannot split $H^0(X,L)$ in four subspaces, but rather we find
\[H^0(X,L)=V_{+}\oplus V_{-}\]
where $V_+, V_-$ are the eigenspaces for one of the generators of $(\mathbb Z/2\mathbb Z)^2$ (say $\tau$), and the other generator (say $\varphi$) acts exchanging the two.\\
On $H^0(X,L)$ we have an action as follows: choose a basis $\{x_0,\dots,x_{d+2}\}$ of $H^0(X,L)$ such that
\begin{align*}
\tau:& (x_0,\dots ,x_{d/2+1},x_{d/2+2},\dots , x_{d+2})\mapsto  \xi_k(x_0,\dots ,x_{d/2+1},-x_{d/2+2},\dots ,-x_{d+2})
\end{align*}
with $\xi_k$ some root of unity, so that $\tau^2$ is the multiplication by $\xi_k^2$; then it holds 
\begin{align*}
\varphi(x_i)=&\xi_m f_i(x_{d/2+2},\dots ,x_{d+2}) \mathrm{\ for \ every\ } i=0,\dots d/2+1,\\
\varphi(x_j)=&\xi_m f_j(x_{0},\dots ,x_{d/2+1}) \mathrm{\ for \ every\ } j=d/2+2,\dots d+2,
\end{align*}
with $\xi_m$ another root of unity and $f_i$ linear such that $\varphi^2$  is the multiplication by $\xi_m^2$; composing them, we get
\begin{align*}
\varphi(\tau(x_i))=&\xi_k\xi_m f_i(x_{d/2+2},\dots ,x_{d+2}) \mathrm{\ for \ every\ } i=0,\dots, d/2+1\\
\varphi(\tau(x_j))=&-\xi_k\xi_m f_j(x_{0},\dots ,x_{d/2+1}) \mathrm{\ for \ every\ } j=d/2+2,\dots, d+2,
\end{align*}
while 
\begin{align*}
\tau(\varphi(x_i))=&-\xi_k\xi_m f_i(x_{d/2+2},\dots ,x_{d+2}) \mathrm{\ for \ every\ } i=0,\dots, d/2+1\\
\tau(\varphi(x_j))=&\xi_k\xi_m f_j(x_{0},\dots ,x_{d/2+1}) \mathrm{\ for \ every\ } j=d/2+2,\dots, d+2,
\end{align*}
so it holds $\tau\varphi=-\varphi\tau$. Therefore $(\tau\varphi)^2$ is the multiplication by $-\xi_k^2\xi_m^2$. Substituting $\tilde\tau=\tau^k$, and $\tilde\varphi=\varphi^m$, we still get $(\tilde\tau\tilde\varphi)^2=-id$, so $\tilde\tau\tilde\varphi$ has order 4 and $\tilde\tau, \tilde\varphi$ span the dihedral group $\mathcal D_4$ (as anticipated in Remark \ref{rem:bigger_action}); by projectivizing, the action of $\mathcal D_4$ loses faithfulness, and we see on $\mathbb P(H^0(X,L)^*)$ an action of $(\mathbb Z/2\mathbb Z)^2$ via the maps described in the statement. 
\end{proof}

\begin{proposition}
In case 2 of Theorem \ref{prop:autospaziKlein} it holds 
\begin{align*}
\pi_\tau^*H^0(\tilde Z_\tau, E_1)=&\pi_{2,2}^*H^0(\tilde Y, D_1)\oplus \pi_{2,2}^*H^0(\tilde Y, D_2),\\
\pi_\tau^*H^0(\tilde Z_\tau, E_2)=&\pi_{2,2}^*H^0(\tilde Y, D_3)\oplus \pi_{2,2}^*H^0(\tilde Y, D_4).\end{align*}
\end{proposition}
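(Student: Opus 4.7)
The proof rests on factoring $\pi_{2,2}$ through $\pi_\tau$ and combining this with an eigenspace decomposition under the residual involution. From diagram~\eqref{diagrammaquozienti}, $\pi_{2,2} = \widehat{\pi_\varphi}\circ\pi_\tau$, so for any divisor $D$ on $\tilde Y$,
\[
\pi_{2,2}^*H^0(\tilde Y, D) = \pi_\tau^*\bigl(\widehat{\pi_\varphi}^*H^0(\tilde Y, D)\bigr).
\]
Since $\pi_\tau$ is surjective of degree $2$, $\pi_\tau^*$ is injective on sections. The claim is therefore equivalent to
\[
H^0(\tilde Z_\tau, E_1) = \widehat{\pi_\varphi}^*H^0(\tilde Y, D_1)\oplus\widehat{\pi_\varphi}^*H^0(\tilde Y, D_2),
\]
and the analogous identity for $E_2$ with $D_3, D_4$.

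The first step is to observe, as already established inside the proof of Theorem~\ref{prop:autospaziKlein}, that in case~2 the residual symplectic involution $\hat\varphi$ preserves each of $E_1, E_2 \in NS(\tilde Z_\tau)$ individually. Hence $\hat\varphi$ acts on each $H^0(\tilde Z_\tau, E_i)$ and splits it into its $(\pm 1)$-eigenspaces. By the standard descent argument for a symplectic double cover (as in \cite[Prop.~2.7]{VGS} and the cyclic analogue \cite[Prop.~6.1.1]{P}), each eigenspace coincides with $\widehat{\pi_\varphi}^*H^0(\tilde Y, D')$ for a nef divisor $D'$ on $\tilde Y$, uniquely determined by the requirement that $\widehat{\pi_\varphi}^* D'$ equal $E_i$ plus a prescribed half-sum of the exceptional classes $m_1,\dots, m_8$ introduced by the resolution $\tilde Y \to \tilde Z_\tau/\hat\varphi$.

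The second step is to match the two divisors arising from this decomposition with the correct pair from $\{D_1, D_2, D_3, D_4\}$. This is done case by case through the nine projective families of Table~\ref{Tab:dimK}: using the explicit expressions of the $D_k$ provided in the tables of Theorem~\ref{prop:autospaziKlein} and the action of $\hat\varphi$ on the classes $n_j$ given in Proposition~\ref{pirho}, one computes $\widehat{\pi_\varphi}^* D_k$ for each $k$ and verifies that $\widehat{\pi_\varphi}^* D_1, \widehat{\pi_\varphi}^* D_2$ give the $(\pm 1)$-eigenspaces of $E_1$, while $\widehat{\pi_\varphi}^* D_3, \widehat{\pi_\varphi}^* D_4$ give those of $E_2$. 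The Euler characteristics of Table~\ref{Tab:dimK} then confirm $\chi(D_1)+\chi(D_2) = \chi(E_1)$ and $\chi(D_3)+\chi(D_4) = \chi(E_2)$, matching dimensions on both sides. The main obstacle is purely the bookkeeping of this case-by-case verification, which parallels the cyclic case of \cite[Prop.~6.1.1]{P} but must be performed separately for each of the nine families, with the correct half-sum of exceptional classes read off from Proposition~\ref{pirho} in each case.
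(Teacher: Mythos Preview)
Your proposal is correct and follows the same approach the paper has in mind: the paper's proof is simply a reference to \cite[Prop.~6.2.2]{P}, i.e.\ the analogous statement in the cyclic order~4 setting, and your argument (factor $\pi_{2,2}$ through $\pi_\tau$, use that $\hat\varphi$ fixes each $E_i$ in case~2, then apply the standard eigenspace--to--divisor descent for the symplectic double cover $\tilde Z_\tau\dashrightarrow\tilde Y$) is exactly the content of that reference adapted to the present group. The case-by-case verification you outline is precisely what is implicit in the citation.
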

\begin{proof}
See the proof of \cite[Prop. 6.2.2]{P}.
\end{proof}
\begin{remark}
To define $D_1,\dots, D_4$ we chose to use the description of $\tilde Y$ as resolution of the quotient $\tilde Z_\tau/\hat\varphi$. The same results can be obtained using $\tilde Z_\varphi/\hat\tau$ instead.
\end{remark}

\subsection{Projective models with $L^2=4$}\label{exK3Klein}

There are three families of K3 surfaces $X$ polarized with an ample class $L$ such that $L^2=4$: for one of them $L=L_0(2)$, so the action of $(\mathbb Z/2\mathbb Z)^2$ is as described in case 1 of Theorem \ref{prop:autospaziKlein}; the other two correspond to \textbf{no. 3}, \textbf{no. 4} of Table \ref{Tab:dimK}, and we can read from there the dimension of the eigenspaces for the action of $(\mathbb Z/2\mathbb Z)^2$. Moreover, from Theorem \ref{correspondence:GammaKlein}, and in particular looking at the degree of $\hat L_\iota$ (the pseudo-ample class on the intermediate quotient surface $\tilde Z_\iota,\ \iota\in\{\tau,\varphi\}$) and of $\overline L$ on $\tilde Y$, we can see the dimension of the projective space in which the quotients are naturally embedded.

To give equations for the general member of each family, we proceed by firstly defining an action of $(\mathbb Z/2\mathbb Z)^2=\langle\tau,\varphi\rangle$ on the correct projective space ($\mathbb P^3$ for $L=L_0(2),L_{2,2}^{(a)}(0)$, $\mathbb P^1\times \mathbb P^1$ if $L=L_{2,2}^{(b)}(0)$) with eigenspaces of the expected dimension; we then find a family of K3 surfaces which are invariant for this action, noting that each family should have dimension ${7=20-(rk(\Omega_{2,2})+1)}$. To check the simplecticity of the action of $(\mathbb Z/2\mathbb Z)^2$, it is sufficient to check that each of the two generators is a symplectic involution, i.e. that each fixes 8 points on $X$.

Let $L=L_0(2)$: consider the action of $(\mathbb Z/2\mathbb Z)^2$ on $\mathbb P^3$ given by
\begin{align*}
(z_0:z_1:z_2:z_3)&\xmapsto{\tau}  (-z_0:-z_1:z_2:z_3)\\
&\xmapsto{\varphi} (z_2:z_3:z_0:z_1)
\end{align*}
then $\varphi$ exchanges the eigenspaces of $\tau$, which is the action described in Section \ref{eigenspaces:tau,phi} for $L_0(2)$. Quartic surfaces invariant for this action are of the form
\begin{align*}Q_3:\ & q(z_0,z_1)+q(z_2,z_3)+\alpha z_0^2z_2^2+\beta z_0z_1z_2z_3+\gamma z_1^2z_3^2+\delta(z_0^2z_2z_3+z_0z_1z_2^2)+\\&+\varepsilon(z_0^2z_3^2+z_1^2z_2^2)+\zeta(z_0z_1z_3^2+z_1^2z_2z_3)=0;\end{align*}
they depend on 11 parameters, but taking into account projectivities of the form $(z_0:z_1:z_2:z_3)\mapsto(az_0+bz_1:cz_0+dz_1:az_2+bz_3:cz_2+dz_3)$ which commute with the given action of $(\mathbb Z/2\mathbb Z)^2$ we find a moduli space of dimension 7. This is therefore a complete family of K3 surfaces with a symplectic action of $(\mathbb Z/2\mathbb Z)^2$. The quotient surfaces $Z_\tau,Z_\varphi$ admit projective models as complete intersection of 3 quadrics in $\mathbb P^5$, as in \cite[\S 3.4]{VGS}. Since $\overline L^2=16$, we expect $\tilde Y\subset P^9$, so it doesn't admit a natural model as complete intersection of hypersurfaces.

\textbf{no. 3}: Consider the action of $(\mathbb Z/2\mathbb Z)^2$ on $\mathbb P^3$ given by
\begin{align*}
(x_0:x_1:x_2:x_3)&\xmapsto{\tau}  (-x_0:-x_1:x_2:x_3)\\
&\xmapsto{\varphi} (-x_0:x_1:-x_2:x_3)
\end{align*}
then the eigenspaces are all of the same dimension. The family of quartic surfaces
\[Q_4: \sum_{i=0}^3 a_ix_i^4+\sum_{\substack{i,j=0\dots 3 \\ j>i}} b_{ij} x_i^2x_j^2+x_0x_1x_2x_3,\]
whose general member is smooth, is invariant for the action above, and it depends on 7 projective parameters up to the action of projectivities that commute with $\tau,\varphi$.

Since the action of $\tau,\varphi$ is the same up to a change of coordinates, the quotient surfaces $Z_\tau,Z_\varphi$ will be described by similar equations. As in \cite[\S 3.4]{VGS}, we consider the map given by the degree 2 invariants under the action of $\tau$
\[(x_0:x_1:x_2:x_3)\mapsto(x_0^2:x_1^2:x_2^2:x_3^2:x_0x_1:x_2x_3)=(z_0:z_1:z_2:z_3:z_4:z_5);\]
then the surface $Q_4$ maps to the complete intersection of quadrics in $\mathbb P^5$
\begin{equation*} R_4:
    \begin{cases}
       z_4^2=z_0z_1\\
       z_5^2=z_1z_2\\
	z_4z_5=-\sum_{i=0}^3 a_iz_i^2-\sum_{\substack{i,j=0\dots 3 \\ j>i}} b_{ij} z_iz_j
    \end{cases}
\end{equation*}
which is a projective model for $Z_\tau$. 
Now, the automorphism $\hat\rho$ on $\mathbb P^5$ is
\[\hat\rho:(z_0:z_1:z_2:z_3:z_4:z_5)\mapsto(z_0:z_1:z_2:z_3:-z_4:-z_5):\]
the surface $R_4$ has the same form as in \cite[\S 3.7]{VGS}, so its quotient under the action of $\hat\rho$, which is a projective model for $Y$, is the quartic surface in $\mathbb P^3=(z_0:z_1:z_2:z_3)$
\[S_4: z_0z_1z_2z_3+(\sum_{i=0}^3 a_iz_i^2+\sum_{\substack{i,j=0\dots 3 \\ j>i}} b_{ij} z_iz_j)^2=0.\]

\textbf{no. 4}: we have $L^{(b)}_{2,2}(0)=H_1+H_2$ with
\begin{gather*}H_1=\frac{L_0(0)+v_2+w}{2}, \ H_2=\frac{L_0(0)+v_2-w}{2};\quad\langle H_1, H_2\rangle=\begin{bmatrix}0 & 2\\ 2 & 0\end{bmatrix},\\
\tau^*(H_1)=H_2, \quad \varphi^*(H_1)=H_1, \quad \varphi^*(H_2)=H_2.\end{gather*}
Hence, by \cite[Thm. 5.2]{SaintDonat}
\[\phi_{|L^{(b)}_{2,2}(0)|}=\phi_{|H_1+H_2|}: X\xrightarrow{2:1} \mathbb P^1\times\mathbb P^1\]
is a double cover  ramified along a curve $\mathcal B$ of bidegree $(4,4)$ invariant for the action of $(\mathbb Z/2\mathbb Z)^2$ on $\mathbb P^1\times \mathbb P^1$ given by
\begin{align*}(x_0:x_1)(y_0:y_1)&\xmapsto{\tau}(y_0:y_1)(x_0:x_1)\\&\xmapsto{\varphi}(x_0:-x_1)(y_0:-y_1);\end{align*}
curves of this type depend on 7 projective parameters when taking into account the action of the group of projectivities of the form $(x_0:x_1)(y_0:y_1)\mapsto (x_0:ax_1)(y_0:ay_1)$, which are the only ones that commute with the action above. We take the quotient of $X$ by the action of $\tau$ as described in \cite[\S 3.5]{VGS}: the surface $Z_\tau$ is a double cover of $\mathbb P^2=(x_0y_0:x_0y_1+x_1y_0:x_1y_1)=(w_0:w_1:w_2)$ ramified along a sextic curve $\mathcal C$, the union of the image $\mathcal B_\tau$ of $\mathcal B$, which is a quartic curve, and the conic curve invariant for the action of $\hat\varphi$ induced on $\mathbb P^2$,
\[\hat\varphi:(w_0:w_1:w_2)\mapsto(w_0:-w_1:w_2).\]
To find a projective model of $Y$, we map $Z_\tau$ to the space of invariants of degree two of $\hat\varphi$, $\mathbb P^3=(w_0^2:w_1^2:w_2^2:w_0w_2)=(z_0:z_1:z_2:z_3)$: then $Y$ is a double cover of the surface $z_0z_2=z_3^2$ ramified along the cubic curve $\overline{\mathcal C}$ (the image of the sextic curve $\mathcal C$).

Now, let's go back and describe $Z_\varphi$: the action of $\varphi$ on $\mathbb P^1\times\mathbb P^1$ fixes 4 points, which do not belong to the branch curve: therefore, if we write $X: t^2=b$, where $b$ is the polynomial of bidegree (4,4) such that $\mathcal B:b=0$, to have 8 fixed points on $X$ we find that $\varphi$ acts as the identity on $t$. Proceeding as in case no. 3 in \cite[\S 6.3]{P} we embed $\mathbb P^1\times \mathbb P^1$ in $\mathbb P^3$ via the Segre map
\[(x_0:x_1)(y_0:y_1)\mapsto(x_0y_0:x_0y_1:x_1y_0:x_1y_1)=(z_0:z_1:z_2:z_3):\] 
now $X$ is a double cover of $z_0z_3=z_1z_2$, ramified along the image of $\mathcal B$.\\ We consider the induced action of $\varphi$ on the weighted projective space $\mathbb P(2,1,1,1,1)$,
\[\varphi: (t; z_0:z_1:z_2:z_3)\mapsto  (t; z_0:-z_1:-z_2:z_3);\]
the space of invariants of degree 2 for $\varphi$ is $\mathbb P^6=(t:z_0^2:z_1^2:z_2^2:z_3^2:z_0z_3:z_1z_2)=(t:a_0:a_1:a_2:a_3:a_4:a_5)$, and the quotient surface is described by
\begin{equation*}
    \begin{cases}
       a_4=a_5\\
	a_0a_3=a_4^2\\
	a_1a_2=a_5^2\\
	t^2=\overline b
    \end{cases}
\end{equation*}
where $\overline b$ is now a quadric: this is therefore a projective model of $Z_\varphi$ as the complete intersection of 3 quadrics in $\mathbb P^5$, as we expected since $(\pi_{\varphi*}L_{2,2}^{(b)}(0))^2=8$.\\
The action of $\hat\tau$ on $\mathbb P^5$ changes sign to $t$ and exchanges $a_1$ with $a_2$, fixing the other coordinates. Let 
\[\mathbb P^5=(t:c_0:c_1:c_2:c_3:c_4)=(t:a_0:a_1+a_2:a_1-a_2:a_3:a_4):\]
similarly to the surface $S_4$ of case no. 4 in \cite[\S 6.3]{P}, to compute the quotient surface we project from the line $\ell=(\lambda:0:0:\mu:0:0)$ on the invariant space for $\hat\tau$: 
\[\pi:\mathbb P^5\rightarrow\mathbb P^3=(c_0:c_1:c_3:c_4).\]
Then, $Z_\varphi$ covers 4:1 the surface ${c_0^2=c_3c_4}$, and $\hat\tau$ exchanges pairwise the points on each regular fiber: therefore we get again a model of $Y$ as double cover of a quadric surface in $\mathbb P^3$, as expected.


\begin{thebibliography}{10}

\bibitem{GP}
A. Garbagnati, Y. Prieto.
\textit{Order 3 symplectic automorphisms on K3 surfaces}.
Math. Z. \textbf{301} (2022), 225–253.

\bibitem{GS}
A. Garbagnati, A. Sarti.
\textit{Projective models of K3 surfaces with an even set}.
Adv. Geom. \textbf{8} (2008), 413–440.

\bibitem{GS1}
A. Garbagnati, A. Sarti. 
\textit{Elliptic fibrations and symplectic automorphisms on K3 surfaces}.
Comm. Algebra \textbf{37} (2009), 3601–3631.

\bibitem{VGS}
B. van Geemen, A. Sarti.
\textit{Nikulin involutions on K3 surfaces}.
Math. Z. \textbf{255} (2007), 731–753.

\bibitem{Nikulin2}
V. V. Nikulin.
\textit{Finite groups of automorphisms of K\"ahlerian K3 surfaces}.
Russian: Trudy Moskov. Mat. Obshch. \textbf{38} (1979), 75–137. 
English translation: Trans. Moscow Math. Soc.  \textbf{38} (1980), 71–135.

\bibitem{Nikulin1}
V. V. Nikulin.
\textit{Integral symmetric bilinear forms and some of their applications}.
Russian: Izv. Akad. Nauk SSSR Ser. Mat. \textbf{43}, No. 1 (1979), 111–177.
English translation: Math. USSR Izv. \textbf{14}, No. 1 (1980), 103–167.

\bibitem{Nishiyama}
K. Nishiyama.
\textit{The Jacobian fibrations on some K3 surfaces and their Mordell-Weil groups}.
Japan. J. Math. \textbf{22}, No. 2 (1996), 293–347.

\bibitem{SaintDonat}
B. Saint-Donat.
\textit{Projective models of K3 surfaces}. 
Am. J. Math. \textbf{96} (1974), 602–639.

\bibitem{P}
B. Piroddi.
\textit{K3 surfaces with a symplectic automorphism of order 4}.
Math. Nachr. (2024), 1–31.

\bibitem{SchuttShioda}
M. Sch\"utt, T. Shioda.
\textit{Elliptic surfaces}.
Adv. Stud. Pure Math \textbf{60} (2010), 51–160.

\end{thebibliography}
\end{document}